\numberwithin{equation}{section}
\newtheorem{theorem}{Theorem}[section]
\newtheorem{lemma}[theorem]{Lemma}
\newtheorem{proposition}[theorem]{Proposition}
\newtheorem{question}[theorem]{Question}
\newtheorem{assumption}[theorem]{Assumption}
\theoremstyle{definition}
\newtheorem{example}[theorem]{Example}
\newenvironment{case}[2][\unskip]{%
\noindent\textsl{Case #1}:\quad {#2}%
}{%
}
\newcommand{\exampleqed}{$\ocircle$\par}
\newcommand{\ZZ}{\mathbb{Z}}			
\newcommand{\NN}{\mathbb{N}}			
\newcommand{\RR}{\mathbb{R}}			
\newcommand{\isdef}{\coloneqq}			
\DeclarePairedDelimiter\abs{\lvert}{\rvert}		
\newcommand{\dd}{\mathrm{d}}			
\newcommand{\ee}{\mathrm{e}}			
\newcommand{\smallo}{o}					
\newcommand{\bigo}{O}					 
\renewcommand{\complement}{
\mathsf{c}%
}
\newcommand{\xPr}{\operatorname{\mathbb{P}}}		
\newcommand{\xExp}{\operatorname{\mathbb{E}}}		
\newcommand{\xVar}{\operatorname{\mathbb{V}\mathrm{ar}}}	
\newcommand{\indicator}[1]{\mathbbm{1}_{#1}}			
\newcommand{\rv}[1]{\pmb{#1}}			
\newcommand{\pspace}[1]{\mathscr{#1}}						
\newcommand{\effR}[3][\unskip]{\mathcal{R}^{#1}(#2\leftrightarrow#3)}	
\newcommand{\pathto}{\leadsto}							
\newcommand{\symb}[1]{\mathtt{#1}}						
\newcommand{\xadd}[1][\unskip]{
	\xrightarrow{\symb{+ #1\ifx#1\empty\else\;\fi}}%
}
\newcommand{\xremove}[1][\unskip]{
	\xrightarrow{\symb{- #1\ifx#1\empty\else\;\fi}}%
}
\newcommand{\critical}{\textrm{c}}							
\newcommand{\algor}{\mathfrak{A}}		
\newcommand*\circled[1]{
\tikz[baseline=-3pt]{
\node[shape=ellipse,draw,inner sep=0.5pt] (char) {\ensuremath{#1}};%
}%
}
\newcommand*\itemcircled[1]{%
\tikz[baseline=-3pt]{
\node[shape=ellipse,draw,inner sep=1pt] (char) {#1};%
}%
}
\newcommand*\itemboxed[1]{%
\tikz[baseline=-3pt]{
\node[shape=rectangle,draw,inner sep=2.5pt] (char) {#1};%
}%
}
\tikzstyle{site}=[inner sep=0pt,thick]
\tikzstyle{blue site}=[site,circle,draw=blue!75,fill=white,minimum size=10pt]
\tikzstyle{red site}=[site,diamond,draw=red!75,fill=white,minimum size=12pt]
\tikzstyle{particle}=[token=1]
\newcommand{\expgraph}{%
	\node[red site] (O) at (0,0) {};
	\node[blue site] (L) at (-3,0) {};
	\node[blue site] (R) at (3,0) {};
	\node[blue site] (U) at (0,3) {};
	\node[blue site] (D) at (0,-3) {};
	
	\node[blue site] (UL) at (-1,1) {};
	\node[red site] (ULL) at (-2,1) {};
	\node[red site] (UUL) at (-1,2) {};
	\node[blue site] (UULL) at (-2,2) {};
	
	\node[blue site] (UR) at (1,1) {};
	\node[red site] (URR) at (2,1) {};
	\node[red site] (UUR) at (1,2) {};
	\node[blue site] (UURR) at (2,2) {};
	
	\node[blue site] (DL) at (-1,-1) {};
	\node[red site] (DLL) at (-2,-1) {};
	\node[red site] (DDL) at (-1,-2) {};
	\node[blue site] (DDLL) at (-2,-2) {};
	
	\node[blue site] (DR) at (1,-1) {};
	\node[red site] (DRR) at (2,-1) {};
	\node[red site] (DDR) at (1,-2) {};
	\node[blue site] (DDRR) at (2,-2) {};
	
	\draw (O) edge (UL) edge (UR) edge (DL) edge (DR);
	\draw (U) edge (UUR) edge (UUL);
	\draw (D) edge (DDR) edge (DDL);
	\draw (R) edge (URR) edge (DRR);
	\draw (L) edge (ULL) edge (DLL);
	
	\draw (UULL) edge (UUL) edge (ULL);
	\draw (UL) edge (UUL) edge (ULL);
	
	\draw (UURR) edge (UUR) edge (URR);
	\draw (UR) edge (UUR) edge (URR);
	
	\draw (DDLL) edge (DDL) edge (DLL);
	\draw (DL) edge (DDL) edge (DLL);
	
	\draw (DDRR) edge (DDR) edge (DRR);
	\draw (DR) edge (DDR) edge (DRR);
}
\begin{document}

\title{Crossover times in bipartite networks\\ with activity constraints and time-varying switching rates}

\author{
Sem Borst\footnote{
Department of Mathematics and Computer Science,
Eindhoven University of Technology, Eindhoven, The Netherlands
}
\and
Frank den Hollander\footnote{
Mathematical Institute, Leiden University, Leiden, The Netherlands
}
\and
Francesca Nardi\footnote{%
	Francesca Nardi sadly passed away before the publication of this article.
}~\,\footnotemark[1]~\,\footnote{
Department of Mathematics, University of Florence, Florence, Italy
}
\and
Siamak Taati\footnotemark[2]~\,\footnote{
Bernoulli Institute, University of Groningen, Groningen, The Netherlands
}
}

\date{}

\maketitle

\begin{abstract}
In this paper we study the performance of a bipartite network in which customers arrive at the nodes of the network, but not all nodes are able to serve their customers at all times. Each node can be either active or inactive, and two nodes connected by a bond cannot be active simultaneously. This situation arises in wireless random-access networks where, due to destructive interference, stations that are close to each other cannot use the same frequency band. 

We consider a model where the network is bipartite, the active nodes switch themselves off at rate~$1$, and the inactive nodes switch themselves on at a rate that depends on time and on which half of the bipartite network they are in. An inactive node cannot become active when one of the nodes it is connected to by a bond is active. The switching protocol allows the nodes to share activity among each other. In the limit as the activation rate becomes large, we compute the crossover time between the two states where one half of the network is active and the other half is inactive. This allows us to assess the overall activity of the network depending on the switching protocol. Our results make use of the metastability analysis for hard-core interacting particle models on finite bipartite graphs derived in an earlier paper. They are valid for a large class of bipartite networks, subject to certain assumptions. Proofs rely on a comparison with switching protocols that are not time-varying, through coupling techniques. 

\medskip

\noindent
\emph{Keywords:} Wireless random-access networks, switching protocols, metastability.

\smallskip

\noindent
\emph{MSC2010:} 
60K25, 
60K30, 
60K35, 
90B15, 
90B18. 

\smallskip

\noindent
\emph{Acknowledgment:}
The research in this paper was supported through NWO Gravitation Grant 024.002.003--NETWORKS. ST was also supported through NWO grant 612.001.409.

\renewcommand{\contentsname}{\vspace{-1.5em}}
{\footnotesize\tableofcontents}
\end{abstract}


\section{Introduction}

Section~\ref{sec:intro:motivation} provides the motivation and background for our paper. Section~\ref{sec:intro:formulation} contains the mathematical formulation of the problem. Section~\ref{sec:intro:theorem} identifies the choices of the activation and deactivation rates in the switching protocol and formulates our main theorem for the crossover time.


\subsection{Motivation and background}
\label{sec:intro:motivation}

\paragraph*{Switching rates}
In the present paper we investigate metastability effects and hitting times for hard-core interaction dynamics with time-varying rates. Specifically, we consider a finite graph $G$ in which vertices (= nodes) can be either \emph{active} or \emph{inactive}, subject to the constraint that vertices connected by an edge (= bond) cannot be active simultaneously. Thus, the feasible joint activity states correspond to (the incidence vectors of) the independent sets of $G$, also called \emph{hard-core configurations}. We denote by $X(t)\in \{\symb{0},\symb{1}\}^{V(G)}$ (with $V(G)$ the vertex set of $G$) the joint activity state at time $t$, with $X_i(t)$ indicating whether vertex $i$ is inactive or active at time $t$. When vertex $i$ is inactive at time $t$, and none of its neighbours is active, it activates at a time-dependent exponential rate $\lambda_i(t)$. Activity durations are exponentially distributed with unit mean, i.e., when a vertex is active it deactivates at exponential rate $1$. Thus, $(X(t))_{t \geq 0}$ evolves as a time-inhomogeneous Markov process with state space $\pspace{X}\subseteq \{\symb{0},\symb{1}\}^{V(G)}$, with $\pspace{X}$ the set of hard-core configurations. 

We will examine the \emph{metastable} behaviour of $(X(t))_{t \geq 0}$ in an asymptotic regime where the activation rates $\lambda_i(t)$ grow large in a suitable sense. Metastable behaviour is different from \emph{mixing} behaviour, which concerns typical hitting times. Analysing metastability on random graphs is challenging, because complex geometric issues arise that require heavy mathematical machinery (see e.g.\ the monograph \cite{BovHol15}). Our specific interest is in \emph{time-dependent} transition rates, a setting that has not been considered in the literature.

\paragraph*{Random-access algorithms} 
The above-described problem is not only interesting from a methodological perspective, it is also relevant in analysing the performance of \emph{random-access algorithms in wireless networks}, in particular, so-called queue-based Carrier Sense Multiple Access (CSMA) policies. The activity periods in the hard-core interaction model correspond to the transmission times of data packets in the wireless network. The graph $G$ corresponds to the \emph{interference graph} of the wireless network, specifying which pairs of nodes are prevented from simultaneous transmission because of interference. In conventional CSMA policies, the various nodes activate at fixed rates, which gives rise to classical hard-core interactions models. Metastability characteristics and mixing properties of such models provide fundamental insight into starvation issues and performance characteristics in wireless networks. In particular, for high activation rates, the stationary distribution of the activity process concentrates on states where the maximum number of nodes is simultaneously active, with extremely slow transitions between them. This ensures high overall efficiency, but from the perspective of an individual node it induces prolonged periods of starvation, possibly interspersed with long sequences of transmissions in rapid succession, resulting in severe build-up of queues and long delays. We refer to~\cite{ZocBorLeeNar13}, \cite{NarZocBor15} and~\cite{Zoc15} for further background and a more comprehensive discussion of how the spatio-temporal dynamics of the activity process in wireless random-access networks can be represented in terms of hard-core interaction models.  We refer to \cite{JiaLecNiShiWal12} for a study of mixing times under CSMA scheduling. 

In queue-based CSMA policies, the activation rates are chosen to be functions of the queue lengths at the various nodes, with the aim to provide greater transmission opportunities to nodes with longer queues. Specifically, the activation rate typically increases as a function of the queue length at a node, and possibly decreases as a function of the queue lengths at neighbouring nodes. The activation rate would thus \emph{vary over time} as queues build up or drain when packets are generated or transmitted. For suitable activation rate functions, queue-based CSMA policies have been shown to achieve maximum stability, i.e., provide stable queues whenever feasible at all (see \cite{RajShaShi09,GhaSri10,JiaShaShiWal10,ShaShiTet11,ShaShi12} and reference therein). Hence, these policies have the capability to match the optimal throughput performance of centralised scheduling strategies, while requiring less computation and operating in a mostly distributed fashion. On the downside, the very activation rate functions required for ensuring maximum stability tend to result in long queues and poor delay performance (see \cite{BouBorLee14,GhaBorWhi14} and references therein). As alluded to above, metastability effects play a pivotal role in that regard, and analysing hitting times for the activity process $(X(t))_{t \geq 0}$ is critical in understanding, and possibly improving, the delay performance of queue-based CSMA policies.

\paragraph*{Bipartite inference graphs} 
In the present paper we focus on a finite \emph{bipartite} graph~$G$, whose vertex set can be partitioned into two sets $U$ and $V$ such that each edge connects one vertex in $U$ with one vertex in $V$ (and no two vertices within $U$ or within $V$).
Metastable behaviour of hard-core dynamics is most pronounced in bipartite graphs, is more amenable to a comprehensive analysis, and captures the essence of the metastability phenomenon.
Therefore, the case of bipartite graphs provides a natural initial stepping stone towards the analysis of more general graph structures.
As a crucial special case, the class of bipartite graphs includes grid graphs that have emerged as a canonical testing ground for exploring the delay performance of CSMA policies. Denote by $u \in \pspace{X}$ and $v \in \pspace{X}$ the joint activity states where all the vertices in either $U$ or $V$ are active, respectively. We will assume that the activation rates are of the form $\lambda_i(t) = \lambda_U(t)$ for all $i \in U$ and $\lambda_i(t) = \lambda_V(t)$ for all $i \in V$, where both $\lambda_U$ and $\lambda_V$ depend on a parameter $\lambda$ controlling the typical length of the queues (see~\eqref{eq:rates:choice} and~\eqref{eq:gdefs} for the choice of dependence to be considered). We are specifically interested in the asymptotic regime $\lambda \to \infty$ (which corresponds to a scenario with large queue lengths). We examine the distribution of the time $T_v = \inf\{t \geq 0\colon\,X(t) = v\}$ until state $v$ is reached for the first time when the system starts from state $u$ at time $0$.\footnote{The metastable behaviour and asymptotic distribution of $T_v$ when $\lambda_i(t) = \lambda^{1 + \alpha_U + \smallo(1)}$ for all $i \in U$ and $\lambda_i(t) = \lambda^{1 + \alpha_V + \smallo(1)}$ for all $i \in V$ were characterised by~den Hollander, Nardi and Taati~\cite{HolNarTaa16}.}

Even though in the above setting the activation rates do not explicitly depend on the queue lengths, the time-dependent rates $\lambda_U(t)$ and $\lambda_V(t)$ properly capture the relevant qualitative behaviour. Indeed, the joint activity states $u$ and $v$ will be asymptotically dominant as $\lambda \to \infty$, i.e., most of the time either all the nodes in $U$ or all the nodes in $V$ will be active. As a result, the queues of the nodes in $U$ and the queues of the nodes in $V$ will tend to either all increase or all decrease simultaneously. While the arrivals and transmissions of packets are governed by random processes, the trajectories of the queue lengths will be roughly linear when viewed on the long time scales of interest.\footnote{For the time-homogeneous setting this was proved in \cite{BorHolNarSfr18}.} Hence, under the assumption of identical arrival rates and initial queue lengths within the sets $U$ and $V$, queue-dependent activation rates can approximately be represented in terms of time-dependent activation rates, as specified above. Since the initial state is $X(0) = u$, the queues of the nodes in $U$ and the queues of the nodes in $V$ will initially tend to go down and up, respectively, and we therefore assume that $\lambda_U(\cdot)$ and $\lambda_V(\cdot)$ are decreasing and increasing functions, respectively (see Figure~\ref{fig:rates:choice:schematic}).


\begin{samepage}
\subsection{Mathematical formulation of the problem}
\label{sec:intro:formulation}
\paragraph*{The general model}
Let us now formulate the problem in more detail. As before, we consider a finite \emph{bipartite graph} $G$ as the underlying graph of the servers, consisting of two finite subsets of vertices $U$ and $V$. Whether a vertex $i\in U\cup V$ is inactive or active at time $t$ is specified by a Bernoulli random variable $X_i(t)\in\{\symb{0},\symb{1}\}$. For each vertex $i$ and each time $t$, we also have a random variable $Q_i(t)\in\NN_0 \isdef \{0,1,2,\ldots\}$ that denotes the length of the queue behind server $i$ at time $t$. The messages at server $i$ are served only during the periods in which $i$ is active. An active server turns inactive at rate $1$, while an inactive server $i$ attempts to become active at the ticks of an inhomogeneous Poisson process with rate $\lambda_i(t)$. An attempt at time $t$ is successful if none of the neighbours of $i$ are active at time $t^-$ (see Figure~\ref{fig:activation-and-conflicts}). All activation/inactivation attempts are independent. A \emph{random-access algorithm} uses the queue length of server $i$, and possibly the queue lengths at its set of neighbours $N(i)$, to decide the activation rate $\lambda_i(t)$, so that $\lambda_i(t) \isdef\algor[Q_i(t),Q_{N(i)}(t)]$ for some function $\algor$. Each such algorithm leads to a Markov process $(X(t),Q(t))_{t\geq 0}$ containing the activity state and the queue length of every server.
\end{samepage}

\begin{figure}[tbp]
\centering
{
\begin{tikzpicture}[xscale=1,yscale=1,every node/.style={scale=1},>=latex]
	\expgraph
	
	\node[particle] at (UUL) {};
	\node[particle] at (ULL) {};
	\node[particle] at (DLL) {};
	\node[particle] at (UR) {};
	\node[particle] at (DDRR) {};
	\node[particle] at (D) {};
	
\end{tikzpicture}
}
\caption{An example of a network with a bipartite underlying graph.  The dots indicate the active servers.  At the current state, the server in the middle cannot become active because its upper right neighbour is already active.}
\label{fig:activation-and-conflicts}
\end{figure}

\paragraph*{First and second approximations}
Our paper is motivated by random-access schemes that are assumed to be totally distributed, i.e., nodes do not have access to aggregate information about other nodes. In a first stage of approximation, we ignore the randomness of the queue lengths and assume that $Q_i(t)\in[0,\infty)$ increases with constant rate when $i$ is inactive and decreases with another constant rate when $i$ is active. In a second stage of approximation, we assume that $Q_i(t)$ is approximately the same for all vertices in the set $U$ or $V$ that $i$ lies in.
If we focus on the evolution of the Markov process starting from one of the two maximal packing configurations until the hitting time of the other maximal packing configuration, then we can assume that the functions $\lambda_i(t)$ are \emph{non-random} and are the same for all servers $i$ that are in $U$ or in $V$. In other words, our time-dependent activation rates may be interpreted as a proxy for queue-dependent activation rates.

\paragraph*{The time-inhomogeneous Markov process}
The approximated system can be thought of as a time-inhomogeneous Markov process $(X(t))_{t \geq 0}$ constructed as follows. (We use a similar representation as in the time-homogeneous setting.) The state space is the set $\pspace{X}$ of hard-core configurations on $G$. The process $(X(t))_{t \geq 0}$ is a c\`adla\`g process defined as follows. The transitions are triggered by a Poisson clock $\rv{\xi}$, i.e., a Poisson point process on $[0,\infty)$ with time-varying rate 
\begin{equation}
\label{eq:gammadef}
\gamma(t)=\big(1+\lambda_U(t)\big)\abs{U} + \big(1+\lambda_V(t)\big)\abs{V}.
\end{equation}
This clock is the union of the birth clocks (rate~$\lambda_U(t)$ or $\lambda_V(t)$ at each site in $U$ or $V$, respectively) and the death clocks (rate~$1$ at each site) for addition and removal of particles.  A transition at time $s\in\rv{\xi}$ is governed by a discrete transition kernel $K^{(s)}(\cdot,\cdot)$, which is essentially the transition matrix of the discrete hard-core dynamics studied in \cite{HolNarTaa16} with parameters $\lambda_U(s)$ and $\lambda_V(s)$. Namely, for distinct hard-core configurations $x$ and $y$, we have
\begin{align}
\label{eq:process:transition-matrix}
K^{(s)}(x,y) &\isdef
\begin{cases}
\frac{\lambda_U(s)}{\gamma(s)} 
&\text{if $x_i=\symb{0}$, $y_i=\symb{1}$, $x_{(U\cup V)\setminus\{i\}}=y_{(U\cup V)\setminus\{i\}}$
for some $i\in U$,} \\
\frac{\lambda_V(s)}{\gamma(s)} 
&\text{if $x_i=\symb{0}$, $y_i=\symb{1}$, $x_{(U\cup V)\setminus\{i\}}=y_{(U\cup V)\setminus\{i\}}$
for some $i\in V$,} \\
\frac{1}{\gamma(s)} 
&\text{if $x_i=\symb{1}$, $y_i=\symb{0}$, $x_{(U\cup V)\setminus\{i\}}=y_{(U\cup V)\setminus\{i\}}$
for some $i\in U\cup V$,} \\
0 
& \text{otherwise,}
\end{cases}
\end{align}
and $K^{(s)}(x,x)$ is defined so as to turn $K^{(s)}$ into a stochastic matrix. At every tick $s\in\rv{\xi}$ of the clock, the process jumps into a new state $X(s)$ distributed according to $K^{(s)}\left(X(s^-),\cdot\right)$.

Here is a more formal construction, which we need for coupling arguments. Let $\rv{\xi}$ be a Poisson process as above, and let $Z(x,t)$, $x\in\pspace{X}$, $t\in[0,\infty)$, be a collection of independent random variables in $\pspace{X}$, and independent of $\rv{\xi}$, with distribution $K^{(t)}(x,\cdot)$. Given an initial configuration $x(0)\in\pspace{X}$, the process $(X(t))_{t \geq 0}$ is constructed recursively by setting $X(s)\isdef Z\left(X(s^-),s\right)$ at each $s\in\rv{\xi}$.

As before, we write $u$ and $v$ for the configurations in which $U$ and $V$ are fully active, respectively. We are interested in the distribution of the hitting time $T_v\isdef\inf\{t\geq 0\colon\, X(t)=v\}$ conditional on starting at $X(0)=u$. Our goal will be to analyse the distribution of $T_v$.

\paragraph*{The regenerative structure of the process}
In the \emph{time-homogeneous} setting, we know that the hitting time $T_v$ starting from $u$ is approximately exponential in the asymptotic regime where $\lambda_U$ and $\lambda_V$ are large. The intuition comes from considering the return times to $u$ of the discrete-time embedded Markov chain as \emph{regeneration times}, and imagining a Bernoulli trial at each regeneration time. The trial is successful if the Markov chain visits $v$ before returning to $u$, and is unsuccessful otherwise. In the asymptotic regime, the probability of success in each trial is small and the expected duration of a single trial (successful or not) is negligible compared to the expected transition time $\xExp_u[T_v]$. The first success time of a large number of trials each having a small probability of success is approximately exponentially distributed (see e.g.,~\cite{Kei79,HolNarTaa16}).

In the \emph{time-inhomogeneous} setting we would like to follow a similar reasoning in order to show that, under appropriate conditions, the transition time $T_v$ from $u$ is approximately exponential with a non-constant rate. We can still use the returns to $u$ as regeneration times, but the success probability and the duration of the trials now depend on the starting times of the trials. One difficulty is that in \cite{HolNarTaa16} we only obtained information on the success probability and the duration of the trials in a time-homogeneous setting. We need to overcome this obstacle. 

To be more specific, let $\bar{\rv{\xi}}\isdef\{0\}\cup\rv{\xi}$. The times $s\in\bar{\rv{\xi}}$ at which $X(s)=u$ are considered as \emph{inhomogeneous regeneration times}. We denote the set of regeneration times by $\rv{\eta}$, so that $\rv{\eta}\isdef\{s\in\bar{\rv{\xi}}\colon\,X(s)=u\}$. A Bernoulli trial is made at each regeneration time $s\in\rv{\eta}$ with indicator random variable $B(s)$. For $t\in[0,\infty)$, we write
\begin{equation}
\begin{aligned}
T_v(t)       &\isdef \inf\{s\geq t\colon\,X(s)=v\}, \\
T^\circlearrowleft_u(t)	&\isdef \inf\{s>t\colon\, \text{$X(s)=u$ and $\rv{\xi}\left((t,s]\right)>0$}\},
\end{aligned}
\end{equation}
to denote the first hitting time of $v$ and the first return time of $u$ after time $t$. The Bernoulli random variable $B(s)$ is defined to be $1$ if $T_v(s)<T^\circlearrowleft_u(s)$ and to be $0$ otherwise. The success probability of the trial at $s\in\rv{\eta}$ is
\begin{equation}
\label{eq:varepsdef}
\varepsilon(s) \isdef \xPr(B(s)=1) = \xPr(T_v(s)<T^\circlearrowleft_u(s)\,|\,X(s)=u).
\end{equation}
The duration of the trial at $s\in\rv{\eta}$ is the random variable $\delta T(s)\isdef \min\{T_v(s),T^\circlearrowleft_u(s)\} - s$. This duration can also be measured in clock ticks by the discrete random variable $L(s)\isdef\rv{\xi}\big((s,s+\delta T(s)]\big)$ that counts the number of clock ticks from $s$ until the next regeneration time. Let
\begin{equation}
S \isdef \inf\{s\in\rv{\eta}: B(s)=1\}
\end{equation}
be the starting point of the first successful trial. The first hitting time of $v$ is the end point of the first successful trial, i.e., $T_v = S + \delta T(S)$. Our goal will be to analyse the distribution of $S$. Note that we expect $\delta T(S)$ to be negligible compared to $S$, as is the case in the time-homogeneous setting.

\paragraph*{Notation}
Throughout this paper we use the following notation:
\begin{itemize}
\item $f(x)\prec g(x)$ means $f(x)=\smallo(g(x))$ as $x\to\infty$,
\item $f(x)\preceq g(x)$ means $f(x)=\bigo(g(x))$ as $x\to\infty$,
\item $f(x)\asymp g(x)$ means $f(x)\preceq g(x)$ and $g(x)\preceq f(x)$ as $x\to\infty$.
\end{itemize}
Our analysis relies on various auxiliary random variables and couplings.
To avoid technical distractions, we describe these in an informal fashion
and use the same symbol $\xPr$ for the associated probability measures,
even when a formal definition would require distinct underlying probability spaces.
The notation $\xPr_u$ is used to indicate the law of the Markov chain
(homogeneous or inhomogeneous, depending on the context) starting from configuration~$u$.

\paragraph*{Review of some results from time-homogeneous setting}
The results of this paper are heavily based on comparison with the time-homogeneous version of the above Markov process in which the rates $\lambda_U$ and $\lambda_V$ are constant.  The time-homogeneous process was studied in~\cite{HolNarTaa16}, where detailed results were obtained on the mean and asymptotic distribution of the crossover time as well as the trajectory of the process close to its bottleneck (the formation of the ``critical droplet'').  The results of the latter paper were obtained for general finite bipartite graphs subject to certain hypotheses on the isoperimetric properties of the graph.  These hypotheses were verified for a few interesting classes of bipartite graphs, including the complete bipartite graph and the two-dimensional torus.  For the sake of comparison as well as future reference, we now briefly recall some relevant results from~\cite{HolNarTaa16}.

As before, let
\begin{align}
\varepsilon &\isdef \xPr_u(T_v<T^\circlearrowleft_u)
\end{align}
and let $\gamma$ denote the rate of the Poisson clock triggering the transitions given by the time-homogeneous version of~\eqref{eq:process:transition-matrix}.  Let
\begin{align}
\alpha &\isdef \frac{\log\lambda_V}{\log\lambda_U}-1 \;.
\end{align}
For a large family of bipartite graphs satisfying a mild \emph{isoperimetric property} (including the complete bipartite graph and the even torus), it was shown that, starting from $u$, the crossover time $T_v$ is asymptotically exponentially distributed, in the sense that
\begin{align}
\lim_{\lambda\to\infty}\xPr_u\big(T_v/\xExp_u[T_v] > t\big) &= \ee^{-t},
\end{align}
provided $\alpha>0$ and $\abs{U}<\big(1+\alpha\big)\abs{V}$.  Under the same assumptions,
\begin{align}
\label{eq:time-homogeneous:success-prob-vs-expectation}
\varepsilon\gamma\xExp_u[T_v] &= 1 + \smallo(1) \;,
\qquad \lambda\to\infty,
\end{align}
(see Equation~(A.5), Corollary~B.4 and Corollary~3.5 in~\cite{HolNarTaa16}). Furthermore,
\begin{align}
\label{eq:mean-crossover:asymptotics}
\xExp_u[T_v] &\asymp
\frac{\lambda_U^{\Delta(k^*)+k^*-1}}{\lambda_V^{k^*-1}} \;,
\qquad \lambda\to\infty,
\end{align}
(see Theorem~1.1 in~\cite{HolNarTaa16}), where $\Delta\colon\NN\to\NN$ denotes the (bipartite) \emph{isoperimetric cost} function of the graph (see Section~2.3 in~\cite{HolNarTaa16}) and $k^*$ is the \emph{critical size}, defined as the smallest positive integer maximising $\Delta(k)-\alpha(s)(k-1)$.

The case of a complete bipartite graph is relatively simple, and one can do direct calculations to obtain a sharp estimate
\begin{align}
\xExp_u[T_v] &= \frac{1}{\abs{U}}\lambda_U^{\abs{U}-1}[1 - \smallo(1)]\;,
\qquad \lambda\to\infty
\end{align}
(see Example~2.1 in~\cite{HolNarTaa16}).
For more general bipartite graphs, under more elaborate assumptions on the isoperimetric properties of the graph, one can obtain similar sharp estimates for the mean crossover time as well as detailed information about the trajectory of the process near its bottleneck, in particular, the shape of the ``critical droplet'' (Theorem~1.3 and Proposition~1.4 in~\cite{HolNarTaa16}).  As a prototypical example, in the case of a torus $\ZZ_m\times\ZZ_n$ (with $m,n$ even and nearest-neighbour edges), when $0<\alpha<1$, one finds that
\begin{align}
\label{eq:crossover:mean:torus}
\xExp_u[T_v] &= \frac{1}{4 m n \ell^*}\frac{
\lambda_U^{\ell^*(\ell^*+1)+1}
}{
\lambda_V^{\ell^*(\ell^*-1)}
}[1+\smallo(1)]\;, \qquad \lambda\to\infty,
\end{align}
where $\ell^*\isdef\lceil\nicefrac{1}{\alpha}\rceil$ is the size of the critical droplet.

\paragraph*{Stochastic ordering and monotonicity}
In~\cite{HolNarTaa16}, we strongly exploited an appropriate ordering on the configuration space $\pspace{X}$ and couplings respecting this ordering.  This ordering will also be crucial in the arguments of the current paper.

Let us define a partial order on $\pspace{X}$ by declaring $x\sqsubseteq x'$ if $x_U\supseteq x'_U$ and $x_V\subseteq x'_V$.  It is easy to verify that the Markov process (time-homogeneous or time-inhomogeneous) is \emph{monotonic} with respect to this partial order in the following sense: given $x,x'\in\pspace{X}$ with $x\sqsubseteq x'$, one can construct a coupling $\big(\big(X(t), X'(t)\big)\big)_{t\geq 0}$ of two copies of the process with $X(0)=x$ and $X'(0)=x'$ such that with probability~$1$, $X(t)\sqsubseteq X'(t)$ for every $t\geq 0$.

In fact, a more general statement is true.  Let $(\lambda_U,\lambda_V)$ and $(\lambda'_U,\lambda_V')$ be two choices of the parameters, each possibly varying with time, and assume that $\lambda_U\geq\lambda'_U$ and $\lambda_V\leq\lambda'_V$.  Then, given $x,x'\in\pspace{X}$ with $x\sqsubseteq x'$, one can construct a coupling $\big(\big(X(t), X'(t)\big)\big)_{t\geq 0}$ of the process with parameters $(\lambda_U,\lambda_V)$ and the process with parameters $(\lambda'_U,\lambda'_V)$ such that $X(0)=x$ and $X'(0)=x'$ and almost surely $X(t)\sqsubseteq X'(t)$ for every $t\geq 0$.  It follows that for every increasing event $E$ (i.e., an event such that $(y(t))_{t\geq 0}\in E$ whenever $(x(t))_{t\geq 0}\in E$ and $x(t)\sqsubseteq y(t)$ for all $t\geq 0$), it holds
\begin{align}
\xPr\big((X(t))_{t\geq 0}\in E\big) &\leq \xPr\big((X'(t))_{t\geq 0}\in E\big).
\end{align}
A frequent example of an increasing event in the present paper is the event that the process hits $v$ before returning to $u$.


\subsection{Choices of the activation rates and main theorems}
\label{sec:intro:theorem}

Given the above description, and in line with the follow-up paper~\cite{BorHolNarSfr18}, we assume that the activation rates are of the form
\begin{align}
\label{eq:rates:choice}
\lambda_U(t) &\isdef g_U([c_U\lambda - \mu_U t]^+) \;, &
\lambda_V(t) &\isdef g_V(c_V\lambda + \mu_V t) \;,
\end{align}
where $c_U,c_V,\mu_U,\mu_V$ are positive parameters, and both $g_U(x)$ and $g_V(x)$ are increasing with $g_V(x)\succ g_U(x)\succ 1$ as $x\to\infty$ (see Figure~\ref{fig:rates:choice:schematic}). The terms $c_U\lambda$ and $c_V\lambda$ represent approximate queue lengths of the servers in $U$ and $V$, respectively, at time $0$. The term $\mu_V$ represents the rate of arrival of new messages at servers in $V$ (which are inactive), while $\mu_U$ accounts for the service rate minus the rate of arrival of new messages at servers in $U$ (which are active). In~\cite{BorHolNarSfr18}, the model with these choices of parameters is referred to as the \emph{external model}, and a comparison is made between this model and the \emph{internal model}, which is the general model described at the beginning of Section~\ref{sec:intro:formulation}.

\begin{figure}[tbp]
\centering
{
\begin{tikzpicture}[scale=0.7,>=stealth']
\draw[->] (-0.2,0) -- (8.5,0) node[right] {$t$};
\draw[->] (0,-0.2) -- (0,4);
\draw[domain=0:8,color=red,very thick] plot (\x,{0.9*(1.2-0.08*\x)^2.7}) 
node[above left=-2pt,rotate=-3] {$\lambda_U(t)$};
\draw[domain=0:8,color=blue,very thick] plot (\x,{0.9*(1.2+0.04*\x)^3.3}) 
node[overlay,above left=-2pt,rotate=17] {$\lambda_V(t)$};
\end{tikzpicture}
}
\caption{A schematic graph of the activation rates as functions of time.}
\label{fig:rates:choice:schematic}
\end{figure}

\paragraph*{Target}
For concreteness, as in~\cite{BorHolNarSfr18}, we focus on the case in which
\begin{align}
\label{eq:gdefs}
g_U(x)&\asymp x^{\beta_U}\;, & g_V(x)&\asymp x^{\beta_V}\;,
\end{align}
where $\beta_V>\beta_U>0$. (Other choices for the functions $g_U,g_V$ would certainly be relevant for applications. Nonetheless, to limit the complexity of our analysis we restrict to polynomials.) 

We are concerned with the time scale at which the transition from $u$ to $v$ occurs when $\lambda$ is large, and with the limiting distribution of the transition time on this time scale.
Recalling~\eqref{eq:gammadef} and~\eqref{eq:varepsdef}, we let $\nu(s)\isdef\varepsilon(s)\gamma(s)$.
In light of the regeneration structure described above, it is natural to expect that,
for every time scale $M=M(\lambda)$ and every $\tau\in[0,\infty)$,
\begin{align}
\label{eq:target}
\lim_{\lambda\to\infty}\xPr_u\Big(\frac{T_v}{M}>\tau\Big) 
&= \begin{dcases}
	0		& \text{if $M\nu(M\tau)\succ 1$,} \\
	\exp\left(-\int_0^\tau \psi(\sigma)\,\dd\sigma\right) 	& \text{if $M\nu(M\tau)\asymp 1$,} \\
	1		& \text{if $M\nu(M\tau)\prec 1$,}
\end{dcases}
\end{align}
where in the middle case,
$\psi(\sigma)\isdef\lim_{\lambda\to\infty} M\nu(M\sigma)$.  We assume that the limit exists.
So, roughly speaking, if we let $M_\critical=M_\critical(\lambda)$ be the ``solution'' of $M\nu(M)\asymp 1$ (which is expected to be unique up to asymptotic equivalence $\asymp$), then the transition occurs almost surely on the time scale $M_\critical$, in the sense that  $\xPr_u(T_v>t) \approx 1$ for $t\prec M_\critical$ and $\xPr_u(T_v>t)\approx 0$ for $t\succ M_\critical$. On the time scale $M_\critical$, the transition time follows an exponential law with a time-varying rate. In~\cite{BorHolNarSfr18}, the equality $M\nu(M)\asymp 1$ is solved under the assumption that $G$ is a \emph{complete bipartite} graph and it is shown that the asymptotic behaviour of the system follows distinct patterns depending on whether $\beta_U<\frac{1}{\abs{U}-1}$, $\beta_U=\frac{1}{\abs{U}-1}$ or $\beta_U>\frac{1}{\abs{U}-1}$, which are referred to as the subcritical, the critical and the supercritical regime, respectively.  

The main goal of the present paper is to show that, under suitable conditions, a variant of the equality in~\eqref{eq:target} indeed holds. Throughout the sequel we make the following key assumptions.

\begin{assumption}[\textbf{Isoperimetric properties}]
\label{keyassump}
Either of the following two conditions holds:
\begin{enumerate}[label={\rm(\alph*)}] 
\item \label{keyassump:complete-bipartite} $G$ is a complete bipartite graph with $\abs{U}>1$.
\item \label{keyassump:general} $G$ satisfies hypothesis~\textup{(H2)} in~\textup{\cite{HolNarTaa16}} and $\beta_U\abs{U}<\beta_V\abs{V}$.
\end{enumerate}		
\end{assumption}

\begin{assumption}[\textbf{Energy barriers}]
\label{keyassumpextra}
$\check{\Gamma}^{(0)}\prec\sqrt{\Gamma^{(0)}}$. 
\end{assumption}

\noindent
Hypothesis (H2) in~\cite{HolNarTaa16} says that for every $i \in V$ there exists an isoperimetric numbering of~$V$ starting with $i$ that  is sufficiently long. The condition $\beta_U\abs{U}<\beta_V\abs{V}$ is a restatement of hypothesis~(H0) in~\cite{HolNarTaa16} and guarantees that $u$ is metastable and $v$ is stable, which is the setting considered in the present paper.
The quantities $\Gamma^{(s)}$ and $\check{\Gamma}^{(s)}$ are introduced in Section~\ref{sec:reghom} below.  Intuitively, for each $s \geq 0$, $\Gamma^{(s)}$ is the height of the hill that the time-homogeneous process with parameters $\lambda_U(s)$ and $\lambda_V(s)$ needs to climb in order to go from $u$ to $v$, whereas $\check{\Gamma}^{(s)}$ is the depth of the deepest well whose bottom is not $u$ or $v$ (the process may get stuck in this well on its way from $u$ to $v$).  Under Assumption~\ref{keyassump}, the identification of $\Gamma^{(s)}$ boils down to the identification of the (bipartite) isoperimetric cost function (see Section~\ref{sec:intro:formulation}). The quantity $\check{\Gamma}^{(s)}$ has not been studied before.  Assumption~\ref{keyassumpextra} is technical and presumably can be weakened. In Example~\ref{ex:cbg} below we show that for a complete bipartite graph it is redundant. 

In practice, it will be more convenient to replace $\varepsilon(s)$ in \eqref{eq:varepsdef} by
\begin{equation}
\check{\varepsilon}(s) \isdef \xPr_u^{(s)}(T_v<T^\circlearrowleft_u),
\end{equation}
where $\xPr^{(s)}$ is the law of a time-homogeneous Markov chain with parameters $\lambda_U(s)$ and $\lambda_V(s)$. The two quantities are expected to be close to each other, but the advantage of $\check{\varepsilon}(s)$ is that it is more tractable than $\varepsilon(s)$, and can be sharply estimated in various cases. Let 
\begin{equation}
\check{\nu}(s)\isdef\check{\varepsilon}(s)\gamma(s).
\end{equation}

\paragraph*{Main theorem}
The main theorem of the present paper is the following result identifying the asymptotic law of the crossover time.  

\begin{theorem}[\textbf{Law of crossover time}]
\label{thm:main}
Suppose that Assumptions~\textup{\ref{keyassump}--\ref{keyassumpextra}} are satisfied. Let $M=M(\lambda)>0$ be a given time scale such that $M(\lambda) \to \infty$ as $\lambda \to \infty$.
\begin{enumerate}[label=\textup{(\roman*)}]
\item \label{item:main:scenario-1} If $M\prec\lambda$ as $\lambda\to\infty$, then
\begin{align}
\label{eq:main-result}
\xPr_u\Big(\frac{T_v}{M}>\tau\Big) 
&= \begin{dcases}
	\smallo(1)		& \text{if $M\check{\nu}(0)\succ 1$,} \\
	[1\pm\smallo(1)]\exp\left(-\int_0^\tau M\check{\nu}(M\sigma)\,\dd\sigma\right)
		& \text{if $M\check{\nu}(0)\asymp 1$,} \\
	1-\smallo(1)	& \text{if $M\check{\nu}(0)\prec 1$,}
\end{dcases}
\end{align}
for every $\tau\in(0,\infty)$ as $\lambda\to\infty$.
\item \label{item:main:scenario-2} If $M\asymp\lambda$, then the identity~\eqref{eq:main-result} still holds for every $0<\tau<\frac{c_U}{\mu_U}\frac{\lambda}{M}$, while $\lim_{\lambda\to\infty}\xPr_u\big(T_v/M>\tau\big) = 0$ when $\tau\geq\frac{c_U}{\mu_U}\frac{\lambda}{M}$.
\item \label{item:main:scenario-3} If $M\succ\lambda$, then $\lim_{\lambda\to\infty} \xPr_u\big(T_v/M>\tau\big) = 0$ for every $\tau>0$.
\end{enumerate}
In fact, Assumption~\textup{\ref{keyassumpextra}} is only needed for the middle case of~\eqref{eq:main-result} in scenarios~\ref{item:main:scenario-1} and~\ref{item:main:scenario-2}.
\end{theorem}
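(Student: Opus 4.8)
\medskip
\noindent\textbf{Proof strategy.}
The backbone is the inhomogeneous regeneration structure set up above: returns to $u$ are regeneration times, each carrying a trial with success probability $\varepsilon(s)$, $S$ is the start of the first successful trial, and $T_v=S+\delta T(S)$. The heuristic is that near time $t$ there are $\asymp\gamma(t)$ regeneration times per unit time, each succeeding with probability $\asymp\check\varepsilon(t)$, so $S$ should behave like the first atom of an inhomogeneous Poisson process of intensity $\check\nu=\check\varepsilon\gamma$, i.e.\ $\xPr_u(S>t)\approx\ee^{-\int_0^t\check\nu(\sigma)\,\dd\sigma}$. For the two extreme regimes of~\eqref{eq:main-result} I would avoid all fine estimates: on $[0,\tau M]$ the rates satisfy $\lambda_U(t)\in[\lambda_U(\tau M),\lambda_U(0)]$ and $\lambda_V(t)\in[\lambda_V(0),\lambda_V(\tau M)]$ (monotonicity of $\lambda_U$, $\lambda_V$), so by the coupling recalled above the process started from $u$ is squeezed, with respect to $\sqsubseteq$, between the two time-homogeneous processes with parameters $(\lambda_U(0),\lambda_V(0))$ and $(\lambda_U(\tau M),\lambda_V(\tau M))$, and hence $T_v$ lies between their crossover times. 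In scenario~\ref{item:main:scenario-1} both parameter pairs have order $(\lambda^{\beta_U},\lambda^{\beta_V})$, so Assumption~\ref{keyassump} makes the results of~\cite{HolNarTaa16} applicable to both, with mean crossover time $\asymp 1/\check\nu(0)$ (by~\eqref{eq:time-homogeneous:success-prob-vs-expectation}--\eqref{eq:mean-crossover:asymptotics}) and an exponential law. Since $\tau M\succ 1/\check\nu(0)$ when $M\check\nu(0)\succ 1$, the slower homogeneous chain already reaches $v$ by time $\tau M$ w.h.p., giving $\xPr_u(T_v/M>\tau)\to 0$; since $\tau M\prec 1/\check\nu(0)$ when $M\check\nu(0)\prec 1$, the faster one does not, giving $\xPr_u(T_v/M>\tau)\to 1$. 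This settles the extreme cases using only monotonicity and~\cite{HolNarTaa16}, which is why Assumption~\ref{keyassumpextra} is dispensable there.

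\medskip
\noindent For the middle case $M\check\nu(0)\asymp 1$ the regeneration computation is done in earnest, with $\check\nu$ in place of $\nu$. Step one is the uniform comparison $\varepsilon(s)=(1+\smallo(1))\check\varepsilon(s)$: monotonicity gives $\varepsilon(s)\geq\check\varepsilon(s)$ at once (after time $s$ the inhomogeneous chain has smaller $\lambda_U$ and larger $\lambda_V$ than the chain frozen at $s$), while splitting $\varepsilon(s)$ according to whether the trial ends by time $s+w$ and using that on $[s,s+w]$ the inhomogeneous chain is $\sqsubseteq$-dominated by the chain frozen at $s+w$ yields
\begin{equation}
\label{eq:plan:eps-bound}
\varepsilon(s)\;\leq\;\check\varepsilon(s+w)\;+\;\xPr\big(\delta T(s)>w\,\big|\,X(s)=u\big),
\end{equation}
the last probability referring to the actual time-inhomogeneous dynamics; taking $w\prec\lambda$ gives $\check\varepsilon(s+w)=(1+\smallo(1))\check\varepsilon(s)$ by slow variation of the power-law rates, and $w$ large enough makes the residual $\prec\check\varepsilon(s)$ (see the obstacle below). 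Step two: on $\{S>t\}$ the process sits at $u$ for a fraction $1-\smallo(1)$ of $[0,t]$ — trivial trials (a single inter-tick interval with no effective transition) vastly outnumber the excursions, and excursions are short — so the number of regeneration times in $[0,t]$ is $(1+\smallo(1))\int_0^t\gamma(\sigma)\,\dd\sigma$. Step three: writing $\xPr_u(S>t)$ as the expectation of $\prod(1-\varepsilon(\tau_i))$ over the regeneration times $\tau_i\leq t$ (strong Markov property at each $\tau_i$), and inserting $\varepsilon(\tau_i)=(1+\smallo(1))\check\varepsilon(\tau_i)\to 0$, the regeneration count, and slow variation of $\check\varepsilon$, the product concentrates at $\ee^{-(1+\smallo(1))\int_0^t\check\nu(\sigma)\,\dd\sigma}$; with $t=\tau M$ and a change of variables this is the middle line of~\eqref{eq:main-result} for $S$, and since $\delta T(S)\preceq w\prec M$ w.h.p.\ by the same barrier control, $T_v=S+\delta T(S)$ inherits the law. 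Finally, in scenario~\ref{item:main:scenario-1}, $\check\nu(M\sigma)\asymp\check\nu(0)$ for $\sigma\in[0,\tau]$, so $\int_0^\tau M\check\nu(M\sigma)\,\dd\sigma$ diverges, is bounded between $0$ and $\infty$, or vanishes according as $M\check\nu(0)\succ1$, $\asymp1$, or $\prec1$ — recovering the trichotomy.

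\medskip
\noindent In scenarios~\ref{item:main:scenario-2} and~\ref{item:main:scenario-3} the extra feature is that $\lambda_U(t)=0$ once $t\geq\frac{c_U}{\mu_U}\lambda$: thereafter no vertex of $U$ activates, $U$ empties within time $\bigo(\log\abs{U})$, and then, since $\lambda_V\asymp\lambda^{\beta_V}\to\infty$, $V$ fills up in time $\smallo(1)$; hence from any configuration $v$ is reached within an additional amount of time bounded in probability, so $T_v\leq\frac{c_U}{\mu_U}\lambda+\bigo_{\xPr}(1)$ always. In scenario~\ref{item:main:scenario-3} this is $\prec M$, so $\xPr_u(T_v/M>\tau)\to 0$ for every $\tau>0$. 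In scenario~\ref{item:main:scenario-2} I would run the middle-case argument on $[0,\tau_0 M]$ for any fixed $\tau_0<\frac{c_U}{\mu_U}\frac{\lambda}{M}$ — the window argument survives there since $w$ may be taken $\prec c_U\lambda-\mu_U\tau_0 M$ — and combine it with $\int_0^\tau M\check\nu(M\sigma)\,\dd\sigma\to\infty$ as $\tau\uparrow\frac{c_U}{\mu_U}\frac{\lambda}{M}$ (because $\lambda_U(M\sigma)\downarrow 0$ keeps $\check\varepsilon$ bounded below while $\gamma\to\infty$); this gives~\eqref{eq:main-result} uniformly on $(0,\tfrac{c_U}{\mu_U}\tfrac{\lambda}{M})$ and, with the above upper bound on $T_v$, forces $\xPr_u(T_v/M>\tau)\to 0$ for $\tau\geq\frac{c_U}{\mu_U}\frac{\lambda}{M}$.

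\medskip
\noindent\textbf{Main obstacle.}
Everything rests on the residual bound $\xPr(\delta T(s)>w\mid X(s)=u)\prec\check\varepsilon(s)$ in~\eqref{eq:plan:eps-bound}, i.e.\ on controlling how long a trial can linger in a well whose bottom is neither $u$ nor $v$ — something with no analogue in~\cite{HolNarTaa16}. This requires, first, pinning down $\check\Gamma^{(s)}$ (the depth of the deepest such well) and showing the chain exits such a well within time $\preceq\check\Gamma^{(s)}\log\lambda$ with overwhelming probability; second, checking that Assumption~\ref{keyassumpextra}, $\check\Gamma^{(0)}\prec\sqrt{\Gamma^{(0)}}$, leaves room for a window $w$ with $\check\Gamma^{(0)}\log\lambda\prec w\prec M$ (recall $M\asymp M_\critical\asymp\Gamma^{(0)}$, a polynomial power of $\lambda$): the exponent $\tfrac12$ is exactly what makes $\check\Gamma^{(0)}\log\lambda\prec\Gamma^{(0)}$. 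The same barrier control is what underlies step two of the middle case — bounding the total time spent away from $u$ before the first success, and the contribution of rare ``stuck'' trials to $\sum_i\varepsilon(\tau_i)$. All other ingredients (monotonicity, the homogeneous exponential law and mean estimate, slow variation of the power-law rates, the strong Markov property at regeneration times) are standard or already available from~\cite{HolNarTaa16}.
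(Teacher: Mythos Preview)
Your treatment of the extreme regimes in scenario~(i), and of scenarios~(ii) and~(iii), matches the paper's proof: sandwich the inhomogeneous chain between the homogeneous chains frozen at times $0$ and $M\tau$, invoke~\cite{HolNarTaa16} for their exponential laws and mean estimates, and observe that after time $\frac{c_U}{\mu_U}\lambda$ the chain reaches $v$ in $\bigo_{\xPr}(1)$ time. Your identification of the main obstacle --- controlling the depth $\check{\Gamma}^{(s)}$ of secondary wells, and the role of Assumption~\ref{keyassumpextra} in providing the room $\check{\Gamma}^{(0)}\log\gamma(0)\prec\Gamma^{(0)}$ --- is also exactly right.

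For the critical regime, however, the paper's technical route differs from yours. You propose to evaluate $\xPr_u(S>t)$ as the expectation of $\prod_{s\in\rv{\eta}\cap[0,t]}(1-\varepsilon(s))$ directly over the random regeneration set $\rv{\eta}$, and to argue that this product concentrates at $\exp(-\int_0^t\check{\nu})$. The difficulty is that $\rv{\eta}$ is not a Poisson process --- its points are determined by the chain dynamics --- so there is no off-the-shelf tool for integrating such a product, and ``Step two'' plus ``Step three'' would still need a genuine concentration argument for $\sum_{s\in\rv{\eta}\cap[0,t]}\check{\varepsilon}(s)$. The paper instead conditions on the \emph{full} Poisson clock $\rv{\xi}\supseteq\rv{\eta}$; once $\rv{\xi}$ is fixed the trials become conditionally independent, and the product over $\rv{\xi}$ is integrated exactly via Campbell's theorem (Propositions~\ref{prop:abstract:lower-bound} and~\ref{prop:abstract:upper-bound}). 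The lower bound simply overcounts trials by replacing $\rv{\eta}$ with $\rv{\xi}$; the upper bound controls the overcounting through a bound on the generating function $\xExp\big[r^{\tilde{L}(s)-1}\,\big|\,\rv{\xi}\big]$ of the (failure-conditioned) trial length (Proposition~\ref{prop:upper-bound:simplified}). Your window inequality $\varepsilon(s)\leq\check{\varepsilon}(s+w)+\xPr(\delta T(s)>w)$ reappears as Proposition~\ref{prop:lower-bound:typical}, but applied pointwise in $\rv{\xi}$ (and conditional on success) rather than unconditionally; the barrier estimates you anticipate are precisely the short-time regularity conditions~\ref{item:task:truncated-mean}--\ref{item:task:last-excursion}, verified by short-time coupling with the homogeneous chain (Lemmas~\ref{lem:markov-chain:short-term-comparison}--\ref{lem:transition-probabilities:comparison} and Proposition~\ref{prop:time-homogeneous-inhomogeneous:comparison:short-term}).
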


The distinction between the \emph{three scenarios} $M\prec\lambda$, $M\asymp\lambda$ and $M\succ\lambda$ is simply due to the fact that, according to~\eqref{eq:rates:choice}, $\lambda_U(s)=0$ for $s\geq \frac{c_U}{\mu_U}\lambda$. Note that the conditions in~\eqref{eq:main-result} are in terms of $M\check{\nu}(0)$ rather than $M\check{\nu}(M\tau)$, as suggested by~\eqref{eq:target}.
As we will see in Proposition~\ref{prop:critical-regime:orders-of-magnitude}, the two quantities have the same order of magnitude when $M\tau<\frac{c_U}{\mu_U}\lambda$.
We have written~\eqref{eq:main-result} in the asymptotic form rather than the limit form appearing in~\eqref{eq:target} to allow for the possibility in the middle case that $M\check{\nu}(M\sigma)\asymp 1$ as $\lambda\to\infty$ even when $M\check{\nu}(M\sigma)$ does not converge to a function $\psi(\sigma)$.
We will refer to the top, middle and bottom cases in~\eqref{eq:main-result} as the \emph{supercritical}, \emph{critical} and \emph{subcritical} regime, respectively. It turns out that the subcritical and supercritical regime can be handled by direct comparison with the time-homogeneous setting.  We prove the identity for the critical regime by establishing tight lower and upper bounds for $\xPr_u(T_v>t)$.

The time-inhomogeneity of the system makes it difficult to verify the conditions of Theorem~\ref{thm:main} in full generality. Nevertheless, we show that the conditions are indeed satisfied in two special cases: when $G$ is a complete bipartite graph (i.e., the setting of~\cite{BorHolNarSfr18}) and when $G$ is an even torus $\ZZ_m\times\ZZ_n$ (two examples studied in~\cite{HolNarTaa16}).

\begin{example}[\textbf{Complete bipartite graph}]
\label{ex:cbg}
When $G$ is a complete bipartite graph, Assumption~\ref{keyassump} is trivially satisfied.  In Section~\ref{sec:reghom} (proof of Lemma~\ref{lem:no-deep-well}), we will verify that, for any $s \geq 0$,
\begin{align}
\Gamma^{(s)} \asymp \gamma(s)\lambda_U^{\abs{U}-1}(s), \quad
\check{\Gamma}^{(s)} \asymp \gamma(s)/\lambda_U(s), \qquad \lambda\to\infty.
\end{align}
For the choice of the functions $\lambda_U(t)$ and $\lambda_V(t)$ in \eqref{eq:rates:choice} and \eqref{eq:gdefs}, we have $\lambda_U(0) \asymp \lambda^{\beta_U}$, $\lambda_V(0) \asymp \lambda^{\beta_V}$ and $\gamma(0) \asymp \lambda^{\beta_U \vee \beta_V} = \lambda^{\beta_V} $ (recall that $\beta_V>\beta_U$), and so Assumption~\ref{keyassumpextra} is satisfied if and only if
\begin{align}
\label{eq:condCBG}
\beta_V < (\abs{U}+1)\beta_U.
\end{align}
However, we can remove this restriction by the following argument. When the system starts from $u$, $T_v$ is with high probability close to $T_\varnothing$, i.e., the first hitting time of the configuration~$\varnothing$ in which all the vertices are inactive. Note that in the trajectory from $u$ to~$\varnothing$ no vertex in $V$ ever gets an opportunity to become active. As a result, the asymptotic distribution of $T_v/M$ as $\lambda\to\infty$ (which is the same as the distribution $T_\varnothing/M$) is independent of $\beta_V$. Therefore, without loss of generality, we can lower $\beta_V$ so that it satisfies \eqref{eq:condCBG}. 

Let us now examine the quantity $\check{\nu}(s)=\check{\varepsilon}(s)\gamma(s)$.  As we will see in Section~\ref{sec:reghom}, $\check{\varepsilon}(s)\asymp 1/\Gamma^{(s)}$.  It follows that $\check{\nu}(s)=(c_U\lambda - \mu_U s)^{-(\abs{U}-1)\beta_U}$.  Therefore, in the scenario in which $M\prec\lambda$ and $\tau\in(0,\infty)$ or $M\asymp\lambda$ and $\tau\leq\frac{c_U}{\mu_U}\frac{\lambda}{M}$,
\begin{align}
\label{eq:main-result:complete-bipartite}
\lim_{\lambda\to\infty} \xPr_u\Big(\frac{T_v}{M}>\tau\Big)
&= {\begin{dcases}
0	& \text{if $M\succ \lambda^{(\abs{U}-1)\beta_U}$,} \\
\exp\left(-\int_0^\tau \psi(\sigma)\,\dd\sigma\right)	
& \text{if $M\asymp\lambda^{(\abs{U}-1)\beta_U}$,} \\
1	& \text{if $M\prec\lambda^{(\abs{U}-1)\beta_U}$.}
\end{dcases}}
\end{align}
where in the critical case,
$\psi(\sigma)\isdef\lim_{\lambda\to\infty}M(c_U\lambda - \mu_U M\sigma)^{-(\abs{U}-1)\beta_U}$.
We assume that the limit exists.

The choice of complete bipartite graph simplifies the geometry, because the metastable crossover is global rather than local. In \cite{BorHolNarSfr20}, the results for this example are used to analyse the case of \emph{general bipartite graphs}. It turns out that the metastable crossover consists of a succession of transitions on complete bipartite subgraphs. To identify which sequence of subgraphs the dynamics follows requires the use of a certain \emph{greedy algorithm} that captures the underlying \emph{geometric complexity}.
\hfill\exampleqed
\end{example}

\begin{example}[\textbf{Even torus}]
Let us now consider the case in which $G$ is the torus $\ZZ_m\times\ZZ_n$ with $m,n\in\NN$ even and nearest-neighbour edges.  Assume that $\beta_U\abs{U}<\beta_V\abs{V}$.  In~\cite[Section~6.1]{HolNarTaa16} it was verified that $G$ satisfies hypothesis~(H2), thus Assumption~\ref{keyassump} is satisfied. It follows from the results of that paper that
\begin{align}
\Gamma^{(0)} &\asymp \frac{
\lambda_U^{\ell^*(\ell^*+1)+1}(0)
}{
\lambda_V^{\ell^*(\ell^*-1)}(0)
} \asymp
\lambda^{\beta_U[\ell^*(\ell^*+1)+1] - \beta_V[\ell^*(\ell^*-1)]}.
\end{align}
This can also be seen by combining the fact that $\check{\varepsilon}(s)\asymp 1/\Gamma^{(s)}$ (see Section~\ref{sec:reghom}) with~\eqref{eq:time-homogeneous:success-prob-vs-expectation} and~\eqref{eq:crossover:mean:torus}.  We leave it as an open question to identify the order of magnitude of $\check{\Gamma}^{(0)}$ in this case.  Once such an estimate is available, one can identify the range of the parameters in which Assumption~\ref{keyassumpextra} is satisfied.

From~\eqref{eq:time-homogeneous:success-prob-vs-expectation} and~\eqref{eq:crossover:mean:torus}, we get
\begin{align}
\check{\nu}(s) &=
4 m n \ell^* \frac{
\lambda_V^{\ell^*(\ell^*-1)}(s)
}{
\lambda_U^{\ell^*(\ell^*+1)+1}(s)
}[1+\smallo(1)], \qquad \lambda\to\infty.
\end{align}
Thus, in the parameter regime in which Assumption~\ref{keyassumpextra} is satisfied, Theorem~\ref{thm:main} provides an explicit characterization of the asymptotic law of the crossover time for all choices of the time scale $M$.
\hfill\exampleqed
\end{example}

\paragraph*{Outline of remainder}
In Section~\ref{S2} we explain strategies to derive lower and upper bounds for the success time in a sequence of Bernoulli trials. In Section~\ref{S3} we use the latter to derive lower and upper bounds for the transition times in our network model in terms of certain key quantities. These quantities are further analysed in Section~\ref{S4}, and lead to explicit conditions on the model parameters under which Theorem~\ref{thm:main} can be proved.


\section{Exploiting the regenerative structure}
\label{S2}

In Section~\ref{S2.1} we reformulate the problem in terms of a sequence of Bernoulli trials and look at a simple case, formulated in Proposition~\ref{prop:renewal:simple} below. In Sections~\ref{S2.2} and \ref{S2.3} we derive lower and upper bound for the probability that the success time exceeds $t$, formulated in Propositions~\ref{prop:abstract:lower-bound} and~\ref{prop:abstract:upper-bound} below. In Section~\ref{S3} we will use the latter to formulate concrete bounds. 


\subsection{Reformulation}
\label{S2.1}

We begin by rephrasing the problem in abstract terms without referring to the underlying Markov process.

\paragraph*{General scenario}
We generate a sequence of Bernoulli trials, one after the other. Each trial has a random duration, so that the starting point of the $n$'th trial is random. The success probability and the length of each trial depend on its starting time, but are otherwise independent of the other trials. The outcome of a trial starting at time $s$ is indicated by a Bernoulli random variable $B(s)$, and its duration is denoted by $\delta T(s)$. So, if $0=S_0,S_1,S_2,\ldots$ are the starting times of the trials, then $S_{n+1}=S_n+\delta T(S_n)$. Let $S$ be the starting time of the first successful trial. What can we say about the distribution of $S$?

We are interested in an asymptotic regime where the success probabilities of the Bernoulli trials are small and the duration of each trial conditioned on its failure is approximately exponential with small mean. If $\varepsilon(s)\isdef\xPr(B(s)=1)$ is the success probability of the trial starting at time $s$ and $\gamma(s)$ is the approximate exponential rate of $\delta T(s)$ given $B(s)=0$, then we expect the success time $S$ to approximately have an inhomogeneous exponential distribution with rate $\varepsilon(s)\gamma(s)$, i.e.,
\begin{equation}
\label{eq:exponential:approximate}
\xPr(S>t) \approx \ee^{\textstyle{-\int_0^t \varepsilon(s)\gamma(s)\,\dd s}}.
\end{equation}
In the concrete setting explained above, we have a parameter $\lambda$, and as $\lambda\to\infty$ we expect $\varepsilon(s)=\smallo(1)$ to be close to the time-homogeneous setting with parameters $\lambda_U(s)$ and $\lambda_V(s)$. Conditional on $B(s)=0$, we also expect $\delta T(s)$ to be bounded from below by an exponential random variable with rate $\gamma(s)$, and to have expected value $[1+\smallo(1)]/\gamma(s)$.

\paragraph*{Scenario with underlying Poisson process}
We next concentrate on a more restricted setting that contains the concrete setting above. Let $\rv{\xi}$ be a Poisson point process on $[0,\infty)$ with time-varying rate function $\gamma(t)$ and set $\bar{\rv{\xi}}\isdef\{0\}\cup\rv{\xi}$. We assume that $t\mapsto\gamma(t)$ is integrable over any finite interval. For $s\in\bar{\rv{\xi}}$, let $B(s)$ be a Bernoulli random variable with parameter $\varepsilon(s)>0$, where $\varepsilon\colon\,[0,\infty)\to(0,1)$ is a sufficiently smooth and increasing function. For $s\in\rv{\xi}$, consider also a positive integer-valued random variable $L(s)$ that counts the duration of a potential trial at time $s$ in clock ticks. The random objects $\rv{\xi}$ and $B(s)$, $L(s)$ for $s\in\rv{\xi}$ do not need to be independent. However, we assume that conditional on $\rv{\xi}$ the pairs $(B(s),L(s))$ for different values of $s\in\rv{\xi}$ are independent.

The starting times of the trials can be identified recursively as follows. The first trial is made at time $S_0\isdef 0$. The $(n+1)$-st trial is made at time $S_{n+1}\isdef\sigma_{\rv{\xi}}(S_n,L(S_n))$, where $\sigma_{\rv{\xi}}(s,k)$ is the $k$'th tick of the clock $\rv{\xi}$ after time $s$. Let $\rv{\eta}\isdef\{S_0,S_1,S_2,\ldots\}\subseteq\bar{\rv{\xi}}$ be the random set of trial times. The first success time is $S\isdef\inf\{s\in\rv{\eta}\colon\, B(s)=1\}$.

\paragraph*{The simplest case}
The special case where $L(s)=1$ for every $s$ corresponds to having an exponential distribution with rate $\gamma(s)$ for the duration of each trial. If $B(s)$ is also independent of the Poisson process $\rv{\xi}$, then the distribution of the first hitting time is very close to an inhomogeneous exponential distribution.

\begin{proposition}[\textbf{Exponential duration}]
\label{prop:renewal:simple}
Suppose that $\xPr(L(s)=1\,|\,B(s)=0)=1$ for each $s\in\bar{\rv{\xi}}$, and that the Bernoulli random variables $B(s)$ are independent of the Poisson process $\rv{\xi}$. Then
\begin{equation}
\xPr(S>t) = \big(1-\varepsilon(0)\big)\,\ee^{\,\textstyle{-\int_0^t \varepsilon(s)\gamma(s)\,\dd s}}.
\end{equation}
\end{proposition}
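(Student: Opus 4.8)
The plan is to reduce the event $\{S>t\}$ to a statement about the outcomes of the trials located at the clock ticks in $[0,t]$, then to condition on the Poisson process $\rv{\xi}$ and use independence of the Bernoulli variables, and finally to evaluate the resulting expectation via the probability generating functional of $\rv{\xi}$.

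First I would use the hypothesis $\xPr(L(s)=1\mid B(s)=0)=1$: a failed trial lasts exactly one clock tick, so as long as the trials keep failing, the trial times $S_0=0,S_1,S_2,\ldots$ are precisely the successive points of $\rv{\xi}$. Writing $\rv{\xi}\cap(0,t]=\{t_1<\cdots<t_k\}$ with $k=\rv{\xi}((0,t])$ (a.s.\ finite, since $\gamma$ is integrable on $[0,t]$) and $t_0\isdef 0$, the recursion $S_{n+1}=\sigma_{\rv{\xi}}(S_n,L(S_n))$ gives $S=t_N$ with $N\isdef\inf\{n\geq 0\colon B(t_n)=1\}$. Consequently, pathwise,
\[
\{S>t\}=\{B(t_0)=B(t_1)=\cdots=B(t_k)=0\},
\]
since if all these vanish then $S=t_N\geq t_{k+1}>t$ (with the convention $t_{k+1}=+\infty$ if $\rv{\xi}$ has no further tick), whereas if $B(t_j)=1$ for some $j\leq k$ then $S\leq t_k\leq t$.

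Next I would condition on $\rv{\xi}$. By assumption the family $\{B(s)\}$ is independent of $\rv{\xi}$ and, given $\rv{\xi}$, consists of independent $\mathrm{Bernoulli}(\varepsilon(s))$ variables, so the event identity above yields
\[
\xPr(S>t\mid\rv{\xi})=\big(1-\varepsilon(0)\big)\prod_{s\in\rv{\xi}\cap(0,t]}\big(1-\varepsilon(s)\big).
\]
Taking expectations and invoking the standard identity $\xExp\big[\prod_{s\in\rv{\xi}\cap(0,t]}f(s)\big]=\exp\big(-\int_0^t(1-f(s))\gamma(s)\,\dd s\big)$ for the Poisson process $\rv{\xi}$ with intensity $\gamma$ (applied with $f=1-\varepsilon$) then gives exactly $\xPr(S>t)=(1-\varepsilon(0))\,\ee^{-\int_0^t\varepsilon(s)\gamma(s)\,\dd s}$. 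If one prefers to avoid quoting the generating functional, the same value follows by conditioning further on $\rv{\xi}((0,t])=k$: the $k$ points are then order statistics of $k$ i.i.d.\ samples with density proportional to $\gamma$ on $(0,t]$, so with $\Lambda(t)\isdef\int_0^t\gamma(s)\,\dd s$ the conditional expectation of the product equals $\big(1-\Lambda(t)^{-1}\int_0^t\varepsilon(s)\gamma(s)\,\dd s\big)^k$, and summing this against the $\mathrm{Poisson}(\Lambda(t))$ weights produces $\ee^{-\int_0^t\varepsilon(s)\gamma(s)\,\dd s}$.

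The only point requiring genuine care — and thus the main obstacle, modest as it is — is the bookkeeping in the first step: one must verify that, with $L$ forced to the value $1$ on every failure, the recursion really does place all pre-success trials at consecutive ticks of $\rv{\xi}$, and that the degenerate configurations ($t$ being itself a tick, $\rv{\xi}\cap(0,t]$ empty, or $\rv{\xi}$ having only finitely many ticks in all) remain compatible with the stated identity; each of these is routine. Once the event identity is in hand, the conditioning and the Poisson computation are immediate, and together they deliver the formula.
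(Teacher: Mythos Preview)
Your proof is correct and essentially coincides with the paper's ``Proof II (via Campbell's theorem)'': you condition on $\rv{\xi}$, factor into the product $\prod_{s\in\bar{\rv{\xi}}\cap[0,t]}(1-\varepsilon(s))$, and integrate using the generating-functional identity (i.e., Campbell's theorem), with the added care of spelling out why $\{S>t\}$ reduces to all trials in $[0,t]$ failing. The paper also gives a shorter ``Proof I'' via the colouring theorem for Poisson processes: colouring each tick $s\in\rv{\xi}$ blue when $B(s)=1$ yields a thinned Poisson process of rate $\varepsilon(s)\gamma(s)$, from which the formula is immediate.
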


\begin{proof}[Proof I (via coloring)]
This is immediate from the colouring theorem of the Poisson processes. Namely, let us colour each point $s\in\rv{\xi}$ blue if $B(s)=1$ and red otherwise. Since different points are coloured independently, the set of blue points in $\rv{\xi}$ is itself a Poisson process with rate function $\varepsilon(s)\gamma(s)$.
\end{proof}

\begin{proof}[Proof II (via Campbell's theorem)]
Conditioning on $\rv{\xi}$, we can write
\begin{equation}	
\begin{aligned}
\xPr(S>t\,|\,\rv{\xi}) &= \prod_{s\in\bar{\rv{\xi}}\cap[0,t]} \big(1-\varepsilon(s)\big) \\
&= \big(1-\varepsilon(0)\big)\,\ee^{\,\textstyle{\sum_{s\in\rv{\xi}\cap[0,t]}\log\big(1-\varepsilon(s)\big)}}
= \big(1-\varepsilon(0)\big)\,\ee^{\Sigma},
\end{aligned}
\end{equation}
where $\Sigma\isdef\sum_{s\in\rv{\xi}\cap[0,t]}f(s)$ with $f(s)\isdef\log(1-\varepsilon(s))$. According to Campbell's theorem (see Kingman~\cite[Section 3.2]{Kin93}), we have
\begin{align}
\xExp\big[\ee^\Sigma\big] &= \exp\left\{\int_0^t\big(\ee^{f(s)}-1\big)\gamma(s)\,\dd s\right\}
= -\int_0^t \varepsilon(s)\gamma(s)\,\dd s \in  (-\infty,0]. \nonumber
\end{align}
The latter integral is finite because $\gamma(s)$ is integrable on $[0,t]$ and $0\leq\varepsilon(s)\leq 1$. In summary, we obtain
\begin{equation}
\begin{aligned}
\xPr(S>t) &= \xExp\big[\xPr(S>t\,|\,\rv{\xi})\big] \\
&= \big(1-\varepsilon(0)\big)\,\xExp[\ee^\Sigma]
= \big(1-\varepsilon(0)\big)\,\ee^{\,\textstyle{-\int_0^t \varepsilon(s)\gamma(s)\,\dd s}},
\end{aligned}
\end{equation}
which proves the claim.
\end{proof}

\paragraph*{Making $B(s)$ and $L(s)$ independent}
Since all the trials whose durations are counted in $S$ are unsuccessful ($B(S)$ is the first successful trial), we can use the following trick to remove the dependence between $B(s)$ and $L(s)$. For $s\in\bar{\rv{\xi}}$, we construct a new random variable $\tilde{L}(s)$ as follows. For $B(s)=0$, set $\tilde{L}(s)\isdef L(s)$. Otherwise, choose $\tilde{L}(s)$ independently of $L(s)$ according to the distribution $\xPr(L(s)\in\cdot\,|\,B(s)=0)$. The new random variable $\tilde{L}(s)$ is independent of $B(s)$. Nevertheless, when using $\tilde{L}(s)$ instead of $L(s)$ we get the same value for the first success time $S$.


\subsection{A strategy to find lower bounds}
\label{S2.2}

Without the simplifying assumptions used above, we can still try to use Campbell's theorem to find lower bounds for $\xPr(S>t)$. To start, we can condition on both $\rv{\xi}$ and the collection $\tilde{L}(\cdot)$, and write
\begin{equation}
\begin{aligned}
\xPr\big(S>t\,\big|\, \rv{\xi},\tilde{L}(\cdot)\big) 
&= \prod_{s\in\rv{\eta}\cap[0,t]} \Big(1-\xPr\big(B(s)=1\,\big|\,\rv{\xi}\big)\Big) \\
&= \ee^{\,\textstyle{\sum_{s\in\rv{\eta}\cap[0,t]} \log \big(1-\xPr\big(B(s)=1\,\big|\,\rv{\xi}\big)\big)}} \\
&\geq \ee^{\,\textstyle{\sum_{s\in\bar{\rv{\xi}}\cap[0,t]} \log \big(1-\xPr\big(B(s)=1\,\big|\,\rv{\xi}\big)\big)}}.
\end{aligned}
\end{equation}
The last expression is independent of $\tilde{L}(\cdot)$, and so we get
\begin{equation}
\xPr(S>t\,|\, \rv{\xi}) \geq \ee^{\,\textstyle{\sum_{s\in\bar{\rv{\xi}}\cap[0,t]}
\log \big(1-\xPr\big(B(s)=1\,\big|\,\rv{\xi}\big)\big)}}.
\end{equation}
The latter expression looks like it can be integrated with the help of Campbell's theorem, except that $\xPr(B(s)=1\,|\,\rv{\xi})$ depends on $\rv{\xi}$. However, in our application the dependence is weak.

One idea is to use an upper bound for $\xPr\big(B(s)=1\,\big|\,\rv{\xi}\big)$ that is independent of $\rv{\xi}$. This gives a lower bound for $\xPr(S>t\,|\, \rv{\xi})$ in a form that is integrable by Campbell's theorem. However, in our application there does not seem to be any useful upper bound for $\xPr\big(B(s)=1\,\big|\,\rv{\xi}\big)$ that is valid almost surely. Namely, if $\rv{\xi}$ happens to have an unlikely large gap from $s$ onwards, then the probability $\xPr(B(s)=1\,\big|\,\rv{\xi})$ can be significant.

A more careful approach is to use an upper bound for $\xPr(B(s)=1\,\big|\,\rv{\xi})$ that holds most of the time. To be more specific, let $\hat{\varepsilon}(s)$ be a sufficiently smooth non-negative function, and let $N_t$ be the number of points $s\in\bar{\rv{\xi}}\cap[0,t]$ for which $\xPr(B(s)=1\,|\,\rv{\xi})>\hat{\varepsilon}(s)$. Assuming that $\delta_t(n)\isdef\xPr(N_t>n)$ decays rapidly, we hope to get a lower bound for $\xPr(S>t)$ of the form
\begin{equation}
\xPr(S>t) \geq \big(1-\alpha\big) \, \ee^{\,\textstyle{-\int_0^t \hat{\varepsilon}(s)\gamma(s)\,\dd s}}
\end{equation}
for some $0 < \alpha\ll 1$.

Suppose that when $\xPr(B(s)=1\,|\,\rv{\xi})>\hat{\varepsilon}(s)$ we have a (possibly worse) universal bound $\xPr(B(s)=1\,|\,\rv{\xi})<E$, where $E<1$ may depend on $t$ but not on $s$, at least when $\rv{\xi}$ is in a highly probable set $\Xi_t$. Then
\begin{equation}
\sum_{s\in\bar{\rv{\xi}}\cap[0,t]} \log\Big(1-\xPr\big(B(s)=1\,\big|\,\rv{\xi}\big)\Big)
\geq \sum_{s\in\bar{\rv{\xi}}\cap[0,t]}\log\big(1-\hat{\varepsilon}(s)\big) + N_t\,\log(1-E)
\end{equation}
when $\rv{\xi}\in\Xi_t$, which implies that
\begin{equation}
\xPr(S>t) \geq \xExp\Big[(1-E)^{N_t} \,\ee^{\,\sum_{s\in\bar{\rv{\xi}}\cap[0,t]}
\log(1-\hat{\varepsilon}(s))}\Big] - \xPr(\rv{\xi}\notin\Xi_t).
\end{equation}
The first term on the right-hand side has the form $\xExp\big[Z(1-E)^{N_t}\big]$ for a random variable $0<Z\leq 1$ and a non-negative integer-valued random variable $N_t$ with a rapidly decaying tail $\delta_t(n)=\xPr(N_t>n)$. Campbell's theorem can be used to integrate $Z$ alone, but it is not clear how we can integrate the product of $Z$ and $(1-E)^{N_t}$.

We split $\xExp\big[Z(1-E)^{N_t}\big]$ based on whether $N_t$ is larger or smaller than a constant $m\geq 0$, which we will need to choose later:
\begin{equation}
\label{eq:lbound:split}
\begin{aligned}
\xExp\big[Z(1-E)^{N_t}\big] &\geq
\xExp\big[Z(1-E)^m\,\indicator{N_t\leq m}\big] +
\xExp\big[Z(1-E)^{N_t}\,\indicator{N_t>m}\big] \\
&= \xExp\big[Z(1-E)^m\big] 
- \xExp\Big[Z\underbrace{\big((1-E)^m-(1-E)^{N_t}\big)}_{\leq 1}\,\indicator{N_t>m}\Big] \\
&\geq (1-E)^m\xExp[Z] - \xExp\big[Z\,\indicator{N_t>m}\big].
\end{aligned}
\end{equation}
Applying the Cauchy-Schwarz inequality to the second term, we have
\begin{equation}
\label{eq:lbound:cauchy-schwarz}
\xExp\big[Z\,\indicator{N_t>m}\big] 
\leq \xExp[Z^2]^{\nicefrac{1}{2}} \xExp[\indicator{N_t>m}]^{\nicefrac{1}{2}}
=\sqrt{\delta_t(m)}\,\xExp[Z^2]^{\nicefrac{1}{2}}.
\end{equation}
Now, Campbell's theorem allows us to calculate 
\begin{equation}
\label{eq:lbound:Z}
\xExp[Z] = \big(1-\hat{\varepsilon}(0)\big)\,
\ee^{\,\textstyle{-\int_0^t \hat{\varepsilon}(s)\gamma(s)\,\dd s}}
\end{equation}
and 
\begin{equation}
\label{eq:lbound:Z2}
\begin{aligned}
\xExp[Z^2] &= \big(1-\hat{\varepsilon}(0)\big)^2\,
\ee^{\,\textstyle{-\int_0^t \big([1-\hat{\varepsilon}(s)]^2-1\big)\gamma(s)\,\dd s}}\\
&= \big(1-\hat{\varepsilon}(0)\big)^2\,\ee^{\,\textstyle{-2\int_0^t
\hat{\varepsilon}(s)\big[1-\frac{1}{2}\hat{\varepsilon}(s)\big] \gamma(s)\,\dd s}}.
\end{aligned}
\end{equation}
Combining \eqref{eq:lbound:split}--\eqref{eq:lbound:Z2}, we get
\begin{equation}
\begin{aligned}
\xExp\big[Z(1-E)^{N_t}\big] &\geq
\begin{multlined}[t]
(1-E)^m\big(1-\hat{\varepsilon}(0)\big)\,
\ee^{\,\textstyle{-\int_0^t \hat{\varepsilon}(s)\gamma(s)\,\dd s}} \\
- \sqrt{\delta_t(m)}\,\big(1-\hat{\varepsilon}(0)\big)\,\ee^{\,\textstyle{-\int_0^t
\hat{\varepsilon}(s)\big[1-\frac{1}{2}\hat{\varepsilon}(s)\big]\gamma(s)\,\dd s}}
\end{multlined} \\
&= \underbrace{\big(1-\hat{\varepsilon}(0)\big)}_{\approx 1}\,
\Big[\underbrace{(1-E)^m}_{\approx 1} - \underbrace{\sqrt{\delta_t(m)}}_{\ll 1}
\underbrace{\ee^{\,\textstyle{\frac{1}{2}\int_0^t 
\hat{\varepsilon}(s)^2\gamma(s)\,\dd s}}}_{\approx 1}\Big]\,
\ee^{\,\textstyle{-\int_0^t \hat{\varepsilon}(s)\gamma(s)\,\dd s}},
\end{aligned}
\end{equation}
which has the desired form. In summary, we have the following proposition.

\begin{proposition}[\textbf{Lower bound}]
\label{prop:abstract:lower-bound}
Let $\hat{\varepsilon}(s)$ be a positive measurable function, and let $N_t$ be the number of points $s\in\bar{\rv{\xi}}\cap[0,t]$ for which the inequality $\xPr\big(B(s)=1\,\big|\,\rv{\xi}\big)\leq\hat{\varepsilon}(s)$ fails. Let $0<E<1$ be a constant such that $\xPr\big(B(s)=1\,\big|\,\rv{\xi}\big)\leq E$ whenever $\rv{\xi}$ is in a measurable set $\Xi_t$ and $s\in \bar{\rv{\xi}}\cap[0,t]$. Then, for every $m\geq 0$,
\begin{align}
\xPr(S>t) &\geq
\check{K}(m)\, \ee^{\,\textstyle{-\int_0^t \hat{\varepsilon}(s)\gamma(s)\,\dd s}}
- \xPr(\rv{\xi}\notin\Xi_t) \;,
\end{align}
where
\begin{align}
\check{K}(m) &\isdef \big(1-\hat{\varepsilon}(0)\big) \,
\Big[(1-E)^m - \sqrt{\delta_t(m)}\,\ee^{\,\textstyle{\frac{1}{2}\int_0^t
\hat{\varepsilon}(s)^2\gamma(s)\,\dd s}}\Big]
\end{align}
and $\delta_t(m)\isdef\xPr(N_t>m)$.
\end{proposition}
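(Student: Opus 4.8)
The plan is to turn the informal chain of estimates that precedes the statement into a rigorous argument. First I would condition on the Poisson process $\rv{\xi}$ and on the decoupled clock-tick counts $\tilde{L}(\cdot)$; the survival probability then factorizes over the trial times in $\rv{\eta}\cap[0,t]$, each factor $1-\xPr(B(s)=1\mid\rv{\xi})$ lies in $[0,1]$, and since $\rv{\eta}\subseteq\bar{\rv{\xi}}$, enlarging the product to range over all of $\bar{\rv{\xi}}\cap[0,t]$ can only decrease it. The resulting lower bound is free of $\tilde{L}(\cdot)$, so
\begin{equation*}
\xPr(S>t\mid\rv{\xi}) \;\ge\; \exp\Bigl(\textstyle\sum_{s\in\bar{\rv{\xi}}\cap[0,t]}\log\bigl(1-\xPr(B(s)=1\mid\rv{\xi})\bigr)\Bigr).
\end{equation*}

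Next I would split the sum over $\bar{\rv{\xi}}\cap[0,t]$ into the points where $\xPr(B(s)=1\mid\rv{\xi})\le\hat{\varepsilon}(s)$ — where the summand is at least $\log(1-\hat{\varepsilon}(s))$ — and the at most $N_t$ remaining points, where on the event $\{\rv{\xi}\in\Xi_t\}$ the summand is at least $\log(1-E)$. As all these logarithms are $\le 0$, keeping the bound $\log(1-\hat{\varepsilon}(s))$ at every point and adding the surcharge $N_t\log(1-E)$ is legitimate, giving on $\{\rv{\xi}\in\Xi_t\}$
\begin{equation*}
\xPr(S>t\mid\rv{\xi}) \;\ge\; (1-E)^{N_t}\,Z, \qquad Z\isdef(1-\hat{\varepsilon}(0))\!\!\prod_{s\in\rv{\xi}\cap[0,t]}\!\!(1-\hat{\varepsilon}(s))\in(0,1],
\end{equation*}
while trivially $\xPr(S>t\mid\rv{\xi})\ge 0$ in general. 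Taking expectations and discarding $\{\rv{\xi}\notin\Xi_t\}$, on which the integrand $(1-E)^{N_t}Z$ is at most $1$, yields $\xPr(S>t)\ge\xExp[(1-E)^{N_t}Z]-\xPr(\rv{\xi}\notin\Xi_t)$.

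The one genuine obstacle is that $Z$ and $N_t$ are both measurable with respect to $\rv{\xi}$ and positively correlated, so Campbell's theorem cannot be applied to the product directly. I would resolve this exactly as in~\eqref{eq:lbound:split}: truncate $N_t$ at a free parameter $m\ge 0$, bounding $(1-E)^{N_t}\ge(1-E)^m$ on $\{N_t\le m\}$ and controlling the error on $\{N_t>m\}$ through $0\le(1-E)^m-(1-E)^{N_t}\le 1$, so that the discarded piece costs $\xExp[Z\,\indicator{N_t>m}]\le\sqrt{\delta_t(m)}\,\xExp[Z^2]^{1/2}$ by Cauchy–Schwarz. This leaves only the two multiplicative functionals $\xExp[Z]$ and $\xExp[Z^2]$, which Campbell's theorem evaluates as $(1-\hat{\varepsilon}(0))\,\ee^{-\int_0^t\hat{\varepsilon}\gamma}$ and $(1-\hat{\varepsilon}(0))^2\,\ee^{-\int_0^t(2\hat{\varepsilon}-\hat{\varepsilon}^2)\gamma}$; substituting these and factoring out $\ee^{-\int_0^t\hat{\varepsilon}(s)\gamma(s)\,\dd s}$ reproduces the coefficient $\check{K}(m)$ exactly. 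The only points needing care are the integrability of $\hat{\varepsilon}\gamma$ on $[0,t]$ — so that the Campbell integrals are finite and $Z>0$ almost surely — and the sign bookkeeping in the two splittings, neither of which is delicate.
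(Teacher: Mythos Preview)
Your proposal is correct and follows essentially the same route as the paper: condition on $\rv{\xi}$ and $\tilde{L}(\cdot)$, enlarge the product from $\rv{\eta}$ to $\bar{\rv{\xi}}$, split the points according to whether the $\hat{\varepsilon}$-bound holds (using the uniform bound $E$ on $\Xi_t$ for the $N_t$ exceptional points), then handle the resulting expectation $\xExp[(1-E)^{N_t}Z]$ by the truncation-plus-Cauchy--Schwarz argument of~\eqref{eq:lbound:split}--\eqref{eq:lbound:cauchy-schwarz} and evaluate $\xExp[Z]$, $\xExp[Z^2]$ via Campbell's theorem. The only cosmetic difference is that you pull the deterministic factor $(1-\hat{\varepsilon}(0))$ into the definition of $Z$ from the outset, which the paper does implicitly when computing $\xExp[Z]$ and $\xExp[Z^2]$.
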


Think of $\hat{\varepsilon}(s)$ as a good typical bound and of $E$ as a rough universal bound.

\subsection{A strategy to find upper bounds}
\label{S2.3}

We can try a similar approach via Campbell's theorem to find an upper bound for $\xPr(S>t)$. Conditioning on $\rv{\xi}$ and $\tilde{L}(\cdot)$ as before, we write
\begin{equation}
\begin{aligned}
\xPr\big(S>t\,\big|\, \rv{\xi},\tilde{L}(\cdot)\big) 
&= \prod_{s\in\rv{\eta}\cap[0,t]} \Big(1-\xPr\big(B(s)=1\,\big|\,\rv{\xi}\big)\Big) \\
&= \ee^{\,\textstyle{\sum_{s\in\rv{\eta}\cap[0,t]} \log \big(1-\xPr\big(B(s)=1\,\big|\,\rv{\xi}\big)\big)}}.
\end{aligned}
\end{equation}
We have $\xPr(B(s)=1\,\big|\,\rv{\xi})\geq \check{\varepsilon}(s)$, which holds independently of $\rv{\xi}$. Namely, $\check{\varepsilon}(s)$, which is a positive measurable function, is the probability of $B(s)=1$ when we freeze the parameters $\lambda_U$ and $\lambda_V$ at time $s$. Using this bound, we have
\begin{equation}
\xPr\big(S>t\,\big|\, \rv{\xi},\tilde{L}(\cdot)\big) \leq \ee^{\,\textstyle{\sum_{s\in\rv{\eta}\cap[0,t]}
\log \big(1-\check{\varepsilon}(s)\big)}}.
\end{equation}
We can bound the sum on the right-hand side, noting that all the terms are negative, as
\begin{equation}
\begin{aligned}
\Sigma\big(\rv{\xi},\tilde{L}(\cdot)\big) &\isdef \sum_{s\in\rv{\eta}\cap[0,t]}
\log\big(1-\check{\varepsilon}(s)\big) \\
&\leq \sum_{s\in\bar{\rv{\xi}}\cap[0,t]} \log\big(1-\check{\varepsilon}(s)\big)
- \sum_{s\in\bar{\rv{\xi}}\cap[0,t]}
\sum_{\substack{r\in\bar{\rv{\xi}}\cap[0,t]\\ s<r<\sigma_{\rv{\xi}}(s,\tilde{L}(s))}}
\log\big(1-\check{\varepsilon}(r)\big),
\end{aligned}
\end{equation}
where, as before, $\sigma_{\rv{\xi}}(s,k)$ denotes the $k$'th element of the clock $\rv{\xi}$ after time $s$. Suppose that $0<E'<1$ (possibly dependent on $t$) is such that $\check{\varepsilon}(s)\leq E'$ for each $s\in[0,t]$. In our concrete setting, $\check{\varepsilon}(s)$ is non-decreasing and we can choose $E'\isdef\check{\varepsilon}(t)$. Replacing $\check{\varepsilon}(r)$ by $E'$ in the above inequality, we get
\begin{equation}
\Sigma\big(\rv{\xi},\tilde{L}(\cdot)\big) \leq
\sum_{s\in\bar{\rv{\xi}}\cap[0,t]} \log\big(1-\check{\varepsilon}(s)\big)
- \sum_{s\in\bar{\rv{\xi}}\cap[0,t]} (\tilde{L}(s)-1)\log\big(1-E'\big).
\end{equation}
Integrating with respect to $\tilde{L}(\cdot)$, we get
\begin{equation}
\begin{aligned}
\xPr(S>t\,|\, \rv{\xi}) 
&= \xExp\Big[\xPr\big(S>t\,\big|\, \rv{\xi},\tilde{L}(\cdot)\big) \,\Big|\,\rv{\xi}\Big]
\leq \xExp\big[\ee^{\Sigma\big(\rv{\xi},\tilde{L}(\cdot)\big)}\,\big|\,\rv{\xi}\big] \\
&\leq \ee^{\,\textstyle{\sum_{s\in\bar{\rv{\xi}}\cap[0,t]}
\log \big(1-\check{\varepsilon}(s)\big)}}
\xExp\left[\quad\prod_{\mathclap{s\in\bar{\rv{\xi}}\cap[0,t]}}\;
(1-E')^{-(\tilde{L}(s)-1)}\,\middle|\,\rv{\xi}\right].
\end{aligned}
\end{equation}
Recall that, conditional on $\rv{\xi}$, the random variables $\tilde{L}(s)$ for different values of $s\in\bar{\rv{\xi}}$ are independent. Therefore we can take the product out of the expectation and write
\begin{equation}
\xPr(S>t\,|\, \rv{\xi}) \leq \ee^{\,\textstyle{\sum_{s\in\bar{\rv{\xi}}\cap[0,t]}
\log \big(1-\check{\varepsilon}(s)\big)}}
\;\prod_{\mathclap{s\in\bar{\rv{\xi}}\cap[0,t]}}\;
\xExp\big[(1-E')^{-(\tilde{L}(s)-1)}\,\big|\,\rv{\xi}\big].
\end{equation}
Suppose that we can find a good bound $\xExp\big[(1-E')^{-(\tilde{L}(s)-1)}\,\big|\,\rv{\xi}\big]\leq\rho(s)$ that holds whenever $\rv{\xi}$ is in a highly probably set $\Xi_t$, where $\rho(s)$ is a measurable function not depending on $\rv{\xi}$. (In particular, we would like to have $\rho(s)-1\ll\check{\varepsilon}(s)$ or at least $\rho(s)-1\ll E'$. The existence of such a bound is plausible, because $E'\ll 1$ and $\tilde{L}(s)$ is expected to be close to $1$ with high probability and in expectation.) Then we obtain the bound
\begin{align}
\xPr(S>t\,|\, \rv{\xi}) &\leq
\ee^{\,\textstyle{\sum_{s\in\bar{\rv{\xi}}\cap[0,t]}
\big[\log \big(1-\check{\varepsilon}(s)\big)+\log\rho(s)\big]}}
+ \indicator{\Xi_t^\complement}(\rv{\xi}) \;,
\end{align}
which is integrable via Campbell's theorem. Namely, we get
\begin{equation}
\begin{aligned}
\xPr(S>t) 
&\leq \rho(0)\,\big(1-\check{\varepsilon}(0)\big)\,\ee^{\,\textstyle{\int_0^t
\big(\rho(s)\big[1-\check{\varepsilon}(s)\big]-1\big)\gamma(s)\dd s}} + \xPr(\rv{\xi}\notin\Xi_t) \\
&\leq \Big[\rho(0)\,\big(1-\check{\varepsilon}(0)\big)\,\ee^{\,\textstyle{\int_0^t
\big(\rho(s)-1\big)\gamma(s)\dd s}}\Big] \,
\ee^{\,\textstyle{-\int_0^t \check{\varepsilon}(s)\gamma(s)\dd s}} + \xPr(\rv{\xi}\notin\Xi_t).
\end{aligned}
\end{equation}
Note that if $\rho(s)-1\ll\check{\varepsilon}(s)$, then the factor in the bracket is close to $1$. In summary, we have the following proposition.

\begin{proposition}[\textbf{Upper bound}]
\label{prop:abstract:upper-bound}
Let $\check{\varepsilon}(s)$ be a positive measurable function such that $\xPr\big(B(s)=1\,\big|\,\rv{\xi}=\xi\big)\geq\check{\varepsilon}(s)$ almost surely for all $s\in\bar{\rv{\xi}}\cap[0,t]$. Let $0<E'<1$ be a constant such that $\check{\varepsilon}(s)\leq E'$ for each $s\in[0,t]$. Moreover, let $\rho(s)\geq 1$ be a measurable function such that $\xExp\big[(1-E')^{-(\tilde{L}(s)-1)}\,\big|\,\rv{\xi}=\xi\big]\leq\rho(s)$ for every $\xi$ in a measurable set $\Xi_t$ and all $s\in\bar{\xi}\cap[0,t]$. Then
\begin{align}
\xPr(S>t) &\leq
\hat{K} \,\ee^{\,\textstyle{-\int_0^t \check{\varepsilon}(s)\gamma(s)\dd s}}
+ \xPr(\rv{\xi}\notin\Xi_t),
\end{align}
where
\begin{align}
\hat{K} &\isdef \rho(0)\,\big(1-\check{\varepsilon}(0)\big)\,
\ee^{\,\textstyle{\int_0^t\big(\rho(s)-1\big)\gamma(s)\dd s}}.
\end{align}
\end{proposition}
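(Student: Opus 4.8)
The plan is to formalize the chain of estimates already sketched above, arranging the conditioning so that Campbell's theorem applies at the very last step. First I would condition on the pair $(\rv{\xi},\tilde{L}(\cdot))$ and write $\xPr(S>t\mid\rv{\xi},\tilde{L}(\cdot)) = \prod_{s\in\rv{\eta}\cap[0,t]}\bigl(1-\xPr(B(s)=1\mid\rv{\xi})\bigr)$. Here it is essential that $L$ has been replaced by $\tilde{L}$ as in the ``Making $B(s)$ and $L(s)$ independent'' paragraph, so that the trial set $\rv{\eta}$ and the $B(s)$ interact only through $\rv{\xi}$, and the failure probabilities of the trials that are actually run multiply. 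Applying the hypothesis $\xPr(B(s)=1\mid\rv{\xi})\geq\check{\varepsilon}(s)$ termwise bounds this product by $\ee^{\Sigma(\rv{\xi},\tilde{L}(\cdot))}$ with $\Sigma=\sum_{s\in\rv{\eta}\cap[0,t]}\log(1-\check{\varepsilon}(s))$.

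Next I would enlarge the index set from the random trial set $\rv{\eta}$ to all of $\bar{\rv{\xi}}\cap[0,t]$. Every tick $r\in\bar{\rv{\xi}}\cap[0,t]$ is either a trial time or lies strictly between a trial time $s$ and its successor $\sigma_{\rv{\xi}}(s,\tilde{L}(s))$; since all the terms $\log(1-\check{\varepsilon}(r))$ are negative, and $\check{\varepsilon}(r)\leq E'$ on $[0,t]$, this yields $\Sigma\leq\sum_{s\in\bar{\rv{\xi}}\cap[0,t]}\log(1-\check{\varepsilon}(s)) - \sum_{s\in\bar{\rv{\xi}}\cap[0,t]}(\tilde{L}(s)-1)\log(1-E')$. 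Taking the conditional expectation over $\tilde{L}(\cdot)$ given $\rv{\xi}$ and using that the $\tilde{L}(s)$, $s\in\bar{\rv{\xi}}$, are conditionally independent, the $(\tilde{L}(s)-1)$-factor becomes $\prod_{s\in\bar{\rv{\xi}}\cap[0,t]}\xExp[(1-E')^{-(\tilde{L}(s)-1)}\mid\rv{\xi}]$, which is at most $\prod_{s\in\bar{\rv{\xi}}\cap[0,t]}\rho(s)$ whenever $\rv{\xi}\in\Xi_t$; on the complement I simply use $\xPr(S>t\mid\rv{\xi})\leq 1$, contributing the error term $\xPr(\rv{\xi}\notin\Xi_t)$.

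Finally I would integrate over $\rv{\xi}$. On $\Xi_t$ we have $\xPr(S>t\mid\rv{\xi})\leq(1-\check{\varepsilon}(0))\rho(0)\,\ee^{\sum_{s\in\rv{\xi}\cap[0,t]}[\log(1-\check{\varepsilon}(s))+\log\rho(s)]}$, and Campbell's theorem (as used in the second proof of Proposition~\ref{prop:renewal:simple}) evaluates the expectation of the exponential as $\ee^{\int_0^t(\rho(s)[1-\check{\varepsilon}(s)]-1)\gamma(s)\,\dd s}$. Using $\rho(s)[1-\check{\varepsilon}(s)]-1\leq(\rho(s)-1)-\check{\varepsilon}(s)$ and splitting off $\int_0^t(\rho(s)-1)\gamma(s)\,\dd s$ into the constant $\hat{K}$ gives the stated inequality. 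The step I expect to require the most care is the bookkeeping in passing from $\rv{\eta}$ to $\bar{\rv{\xi}}$: one must check that the decomposition of $\bar{\rv{\xi}}\cap[0,t]$ into trial times and ``skipped'' ticks is valid no matter where $t$ falls (in particular for the last, possibly incomplete, trial), and that the integrals $\int_0^t(\rho(s)-1)\gamma(s)\,\dd s$ and $\int_0^t\check{\varepsilon}(s)\gamma(s)\,\dd s$ entering Campbell's theorem are finite — which follows from $\gamma$ being integrable on bounded intervals, $\check{\varepsilon}\leq E'<1$, and $\rho$ measurable. Everything else is a direct reprise of the computation already carried out for the simplest case.
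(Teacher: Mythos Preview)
Your proposal is correct and follows essentially the same route as the paper's own derivation in Section~\ref{S2.3}: conditioning on $(\rv{\xi},\tilde{L}(\cdot))$, bounding by $\check{\varepsilon}$, enlarging the sum from $\rv{\eta}$ to $\bar{\rv{\xi}}$ with the $(\tilde{L}(s)-1)\log(1-E')$ compensation, using conditional independence of the $\tilde{L}(s)$, and then Campbell's theorem together with $\rho(s)[1-\check{\varepsilon}(s)]-1\leq(\rho(s)-1)-\check{\varepsilon}(s)$. Your remark about the last, possibly incomplete, trial is apt: in the paper's version the overcounting in the compensation term (summing over all $s\in\bar{\rv{\xi}}\cap[0,t]$ and allowing up to $\tilde{L}(s)-1$ ticks even when some lie beyond $t$) only subtracts additional negative terms and hence preserves the inequality.
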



\section{Back to hard-core dynamics}
\label{S3}

Throughout this section, we consider a time scaling of the form $t=M(\lambda)\tau$, where $M=M(\lambda)$ is a positive function that tends to $\infty$ as $\lambda\to\infty$ and $\tau\geq 0$ is the scaled time. In the concrete setting of the time-inhomogeneous hard-core dynamics, we wish to use Propositions~\ref{prop:abstract:lower-bound} and~\ref{prop:abstract:upper-bound} to find sharp bounds for the probability $\xPr(S>t)$. Two questions arise:

\begin{question}
How should we choose $\hat{\varepsilon}(s)$, $E$ and $m$? In particular, we want $\hat{\varepsilon}(s)=\varepsilon(s)[1+\smallo(1)]$ as $\lambda\to\infty$, $E=\smallo(1)$ as $\lambda\to\infty$, and $0<\delta_t(m)\ll 1$, preferably $\delta_t(m)=\smallo(1)$ as $\lambda\to\infty$.
\end{question}

\begin{question}
Can we find a good upper bound for $\xExp\big[r^{\tilde{L}(s)-1}\,\big|\,\rv{\xi}\big]\leq\rho(s)$ for $r\isdef \frac{1}{1-E'}>1$?
\end{question}

In Sections~\ref{S3.1} and \ref{S3.2} we answer these questions, in the form of Propositions~\ref{prop:upper-bound:simplified} and \ref{prop:lower-bound:typical} below. In Section~\ref{S3.3} this leads to a set of further tasks, summarized in Proposition~\ref{prop:remaining-tasks} below, which we address in Section~\ref{S4}.

\paragraph*{Freezing the parameters at time $t=M\tau$}
Observe that the event $T_v>M\tau$ depends only on the state of the process up until time~$M\tau$.  Therefore, the probability $\xPr_u(T_v>M\tau)$ is independent of the value of the parameters $\lambda_U(s)$ and $\lambda_V(s)$ for $s>M\tau$.  In the remainder of the paper, we consider a modified version of the process in which $\lambda_U(s)$ and $\lambda_V(s)$ are truncated at time $s=M\tau$, thus assuming
\begin{align}
\label{eq:assumption:freezing-at-the-end}
	\left\{
		\begin{array}{@{}l@{}l}
			\lambda_U(s) &=\lambda_U(M\tau) \\ [1ex]
			\lambda_V(s) &=\lambda_V(M\tau)
		\end{array}
	\right.
	\qquad
	\forall\,s\geq M.
\end{align}
This assumption does not affect the validity of Theorem~\ref{thm:main} but will simplify the presentation of its proof.


\subsection{Simplified condition for the lower bound}
\label{S3.1}

In order to apply Proposition~\ref{prop:abstract:lower-bound}, we need a good typical upper bound $\hat{\varepsilon}(s)$ for $\xPr\big(B(s)=1\,\big|\,\rv{\xi}=\xi\big)$, where ``typical'' means for most $\xi$.  We also need a rough upper bound $E$ for $\xPr\big(B(s)=1\,\big|\,\rv{\xi}=\xi\big)$ that holds uniformly in $\xi$, in a highly probable set $\Xi_t$, and uniformly in $s\in\bar{\xi}\cap[0,t]$.

To obtain a typical upper bound $\hat{\varepsilon}(s)$, we choose a value $\delta s>0$ (depending on $\lambda$) that is small compared to the overal duration $t$, but large enough such that typically the clock $\rv{\xi}$ has at least $k$ ticks within the interval $(s,s+\delta s)$ for some $k\in\ZZ^+$. The parameter $k$ is chosen large enough so that the Markov chain typically takes less than $k$ steps to reach either $u$ or $v$, i.e., a trial starting at $s$ would typically end before time $s+\delta s$, and so we can bound its probability by coupling the process with the one having frozen parameters $\lambda_U(s+\delta s)$ and $\lambda_V(s+\delta s)$.

To make this idea precise, let $\delta\hat{T}_v(s)$ denote the number of ticks of $\xi$ from time $s$ until the first time the discrete Markov chain arrives at $v$, and define $\delta\hat{T}^\circlearrowleft_u(s)$ similarly.

\begin{proposition}[\textbf{Simplified lower bound}]
\label{prop:lower-bound:typical}
Let $t=M(\lambda)\tau$, where $M=M(\lambda)\to\infty$ as $\lambda\to\infty$ and $\tau\geq 0$ is a constant. Let $\delta s>0$ and $k\in\ZZ^+$ (each possibly depending on $\lambda$). Suppose that, uniformly in $\xi$ and $s\in\bar{\xi}\cap[0,t]$,
\begin{align}
\xPr\big(\delta\hat{T}_v(s)>k\,\big|\ \rv{\xi}=\xi,\,X(s)=u,\,\;\delta\hat{T}_v(s)<\delta\hat{T}^\circlearrowleft_u(s)\big)
&= \smallo(1), \qquad\lambda\to\infty.
\end{align}
Then
\begin{align}
\xPr\big(B(s)=1\,\big|\,\rv{\xi}=\xi\big) 
&= \xPr_u^{(s+\delta s)}(T_v<T^\circlearrowleft_u)\,
[1+\smallo(1)], \qquad \lambda\to\infty. 
\end{align}
uniformly in $\xi$ and $s\in\bar{\xi}\cap[0,t]$ satisfying $\xi(s,s+\delta s)\geq k$.
\end{proposition}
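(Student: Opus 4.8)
The plan is to prove the two-sided estimate
\[
\xPr\big(B(s)=1\,\big|\,\rv{\xi}=\xi\big) = \xPr_u^{(s+\delta s)}(T_v<T^\circlearrowleft_u)\,[1+\smallo(1)]
\]
by sandwiching the conditional success probability between the success probabilities of two \emph{time-homogeneous} chains, one with parameters frozen at $s$ and one with parameters frozen at $s+\delta s$, and then showing these two quantities are asymptotically equivalent. Throughout, the reference clock realization $\xi$ is fixed and we only use the hypothesis $\xi(s,s+\delta s)\ge k$ together with the assumed uniform decay of $\xPr(\delta\hat T_v(s)>k\mid\cdots)$.

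\medskip

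\noindent\textbf{Step 1 (Monotonicity sandwich).}
First I would invoke the stochastic ordering / monotonicity statement from the ``Stochastic ordering and monotonicity'' paragraph. Since $\lambda_U(\cdot)$ is non-increasing and $\lambda_V(\cdot)$ is non-decreasing, on the time interval $[s,s+\delta s]$ the time-inhomogeneous dynamics is sandwiched, in the $\sqsubseteq$ sense, between the time-homogeneous chain with parameters $(\lambda_U(s+\delta s),\lambda_V(s))$ and the one with parameters $(\lambda_U(s),\lambda_V(s+\delta s))$. Because ``hit $v$ before returning to $u$'' is an increasing event, coupling gives, for the process started at $u$ at time $s$ and observed for its first $\lceil\text{anything}\le\xi(s,s+\delta s)\rceil$ steps,
\[
\xPr_u^{(s+\delta s,\,s)}(\cdot) \;\le\; \xPr\big(B(s)=1,\ \text{trial ends before }s+\delta s\,\big|\,\rv{\xi}=\xi\big) \;\le\; \xPr_u^{(s,\,s+\delta s)}(\cdot),
\]
where the superscript $(\lambda_U\text{-time},\lambda_V\text{-time})$ denotes which frozen value is used for each parameter. (Here I am being slightly schematic about the two-parameter frozen chains; one can also just use the single frozen time $s+\delta s$ on one side and $s$ on the other and quote the general comparison coupling.)

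\medskip

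\noindent\textbf{Step 2 (The trial ends inside the window).}
The coupling in Step 1 only controls $B(s)$ on the event that the trial terminates within $\xi(s,s+\delta s)\ge k$ clock ticks. I would therefore bound the discrepancy by
\[
\Big|\xPr\big(B(s)=1\,\big|\,\rv{\xi}=\xi\big) - \xPr\big(B(s)=1,\ \delta T_v(s)\wedge\delta T^\circlearrowleft_u(s)\le k\,\big|\,\rv{\xi}=\xi\big)\Big| \;\le\; \xPr\big(\delta\hat T_v(s)>k,\ B(s)=1\,\big|\,\rv{\xi}=\xi\big),
\]
and this last quantity equals $\xPr(B(s)=1\mid\cdots)\cdot\xPr(\delta\hat T_v(s)>k\mid\rv{\xi}=\xi,X(s)=u,\delta\hat T_v(s)<\delta\hat T^\circlearrowleft_u(s))$, which is $\xPr(B(s)=1\mid\cdots)\cdot\smallo(1)$ by the hypothesis. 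Hence the ``trial fits in the window'' event carries a $1-\smallo(1)$ fraction of the success probability, uniformly in $\xi$ and $s$. The same decomposition applies to the two homogeneous comparison chains (their truncated-success probabilities also agree with their full success probabilities up to $1+\smallo(1)$, since one can re-run the monotonicity argument, or simply note $\delta\hat T_v(s)>k$ forces at least $k$ failed steps which is rare for the homogeneous chain too).

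\medskip

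\noindent\textbf{Step 3 (Frozen parameters at $s$ versus $s+\delta s$ are asymptotically equal).}
The remaining task is to show $\xPr_u^{(s)}(T_v<T^\circlearrowleft_u)$ and $\xPr_u^{(s+\delta s)}(T_v<T^\circlearrowleft_u)$ differ only by a $1+\smallo(1)$ factor, uniformly in $s\in\bar\xi\cap[0,t]$. This is where the choice of $\delta s$ must be exploited: $\delta s$ is chosen small relative to the time scale over which $\lambda_U,\lambda_V$ change by a constant factor (from \eqref{eq:rates:choice}--\eqref{eq:gdefs}, over $[s,s+\delta s]$ with $\delta s\ll\lambda$ one has $\lambda_U(s+\delta s)/\lambda_U(s)\to 1$ and likewise for $\lambda_V$), so $\alpha(s+\delta s)=\alpha(s)+\smallo(1)$, the critical size $k^*$ is unchanged for large $\lambda$, and the $\asymp$-asymptotics for $\check\varepsilon(\cdot)$ recalled from \cite{HolNarTaa16} (namely $\check\varepsilon(s)\asymp 1/\Gamma^{(s)}$ with $\Gamma^{(s)}$ a monomial in $\lambda_U(s),\lambda_V(s)$) give ratio $\to 1$. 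I expect this to be the main obstacle, because it requires (i) uniformity in $s$ over the whole interval $[0,t]$ — including $s$ near $\tfrac{c_U}{\mu_U}\lambda$ where $\lambda_U(s)$ is small and the monomial asymptotics must still be quantitatively controlled — and (ii) a genuinely \emph{ratio}-type (sharp, not just $\asymp$) comparison of the two success probabilities, which in general is delicate; one resolves it via the same monotonicity coupling (Step 1 already gives $\xPr_u^{(s+\delta s)}\le\xPr\le\xPr_u^{(s)}$ for the appropriate frozen chains), so it suffices to show the \emph{two} homogeneous success probabilities are within $1+\smallo(1)$, which follows once $\delta s$ is chosen to make all relevant parameter ratios tend to $1$ uniformly. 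Chaining Steps 1–3 and reading off $\xPr_u^{(s+\delta s)}(T_v<T^\circlearrowleft_u)$ as the common value yields the claim.
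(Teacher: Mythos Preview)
Your Steps~1 and~2 together reproduce the paper's argument for the \emph{upper} bound $\xPr(B(s)=1\mid\rv{\xi}=\xi)\leq\check{\varepsilon}(s+\delta s)[1+\smallo(1)]$: split according to whether $T_v(s)\leq s+\delta s$ or not, bound the first piece by monotone coupling with the homogeneous chain frozen at $s+\delta s$, and show the second piece equals $\xPr(B(s)=1\mid\xi)\cdot\smallo(1)$ via the hypothesis on $\{\delta\hat{T}_v(s)>k\}$. That is exactly what the paper does, and the paper stops there.

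Your Step~3 is where you diverge, and it is a genuine gap. Despite the equality sign in the statement, the paper's proof establishes only the upper bound, and that is the only direction ever used downstream (in the lower-bound half of the proof of Proposition~\ref{prop:remaining-tasks}, the conclusion is quoted as $\xPr(B(s)=1\mid\rv{\xi}=\xi)\leq\check{\varepsilon}(s+\delta s)[1+v(\lambda)]$). To close the sandwich you would need $\check{\varepsilon}(s)=\check{\varepsilon}(s+\delta s)[1+\smallo(1)]$ uniformly in $s\in[0,M\tau]$, but the proposition as stated assumes nothing about the size of $\delta s$, nothing about the specific polynomial form~\eqref{eq:gdefs} of the rates, and imports no sharp asymptotics for $\check{\varepsilon}$ from~\cite{HolNarTaa16}; all of that enters only later, in Section~\ref{sec:tasks:parameters} and Proposition~\ref{prop:critical-regime:orders-of-magnitude}. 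So Step~3 cannot be carried out from the hypotheses of this proposition alone (and indeed you flag it yourself as the ``main obstacle''). In short: read the stated equality as an upper bound; then your Steps~1--2 are the paper's proof, and Step~3 should be dropped.
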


\begin{proof}
Let $\xi$ be fixed and consider a point $s\in\bar{\xi}\cap[0,t]$. The success probability of the trial starting at $s$ can be bounded as follows. Note that
\begin{align}
\xPr\big(B(s)=1\,\big|\,\rv{\xi}=\xi\big) 
&= \xPr\big(T_v(s)<T^\circlearrowleft_u(s)\,\big|\,\rv{\xi}=\xi,X(s)=u\big) \\
&= \xPr\big(T_v(s)<T^\circlearrowleft_u(s),\; T_v(s)\leq s+\delta s\,\big|\,\rv{\xi}=\xi,X(s)=u\big) \nonumber\\
& \quad + \xPr\big(s+\delta s<T_v(s)<T^\circlearrowleft_u(s)\,\big|\,\rv{\xi}=\xi,X(s)=u\big) \nonumber. 
\end{align}
For the first term, using the coupling argument discussed at the end of Section~\ref{sec:intro:formulation}, we can show that
\begin{align}
\MoveEqLeft
\xPr\big(T_v(s)<T^\circlearrowleft_u(s),\; T_v(s)<s+\delta s\,\big|\,\rv{\xi}=\xi,X(s)=u\big) \\ \nonumber
&\leq \xPr_u^{(s+\delta s)}(T_v<T^\circlearrowleft_u,\; T_v\leq\delta s) \\ \nonumber
&\leq \xPr_u^{(s+\delta s)}(T_v<T^\circlearrowleft_u),
\end{align}
where $\xPr^{(s)}$ denotes the probability law for the time-homogeneous version of the process having parameters $\lambda_U(s)$ and $\lambda_V(s)$, and $\xPr^{(s)}_u$ stands for $\xPr^{(s)}(\cdot\,|\,X(0)=u)$. For the other term, we can write
\begin{align}
\MoveEqLeft
\xPr\big(s+\delta s<T_v(s)<T^\circlearrowleft_u(s)\,\big|\,\rv{\xi}=\xi,X(s)=u\big) \\
&= \xPr\big(T_v(s)<T^\circlearrowleft_u(s)\,\big|\,\rv{\xi}=\xi,X(s)=u\big) \nonumber\\
& \quad \times \xPr\big(T_v(s)>s+\delta s\,\big|\,\rv{\xi}=\xi,\,X(s)=u,\,\; T_v(s)<T^\circlearrowleft_u(s)\big) \nonumber \\
&= \xPr\big(B(s)=1\,\big|\,\rv{\xi}=\xi\big)
\,\,\xPr\big(T_v(s)>s+\delta s\,\big|\,\rv{\xi}=\xi,\,X(s)=u,\,\; T_v(s)<T^\circlearrowleft_u(s)\big). \nonumber
\end{align}
We get a suitable bound if
\begin{align}
\xPr\big(T_v(s)>s+\delta s\,\big|\,\rv{\xi}=\xi,X(s)=u\,\; T_v(s)<T^\circlearrowleft_u(s)\big) 
&= \smallo(1)
\end{align}
as $\lambda\to\infty$, as long as $\xi$ has at least $k$ ticks in $(s,s+\delta s)$. But if the condition $\xi\big((s,s+\delta s)\big)\geq k$ is satisfied, then we can estimate
\begin{align}
\MoveEqLeft
\xPr\big(T_v(s)>s+\delta s\,\big|\,\rv{\xi}=\xi,\,X(s)=u,\,\; T_v(s)<T^\circlearrowleft_u(s)\big) \\ \nonumber
&\leq \xPr\big(\delta\hat{T}_v(s)>k\,\big|\,
\rv{\xi}=\xi,\,X(s)=u,\,\;\delta\hat{T}_v(s)<\delta\hat{T}^\circlearrowleft_u(s)\big),
\end{align}
which is assumed to be $\smallo(1)$ as $\lambda\to\infty$.
\end{proof}


\subsection{Simplified condition for the upper bound}
\label{S3.2}

In order to apply Proposition~\ref{prop:abstract:upper-bound} effectively, we need a bound $\xExp\big[(1-E')^{-(\tilde{L}(s)-1)}\,\big|\,\rv{\xi}\big] \leq 1+E'\smallo(1)$ as $\lambda\to\infty$, whenever $\rv{\xi}$ is in a highly probable set $\Xi_t$. Recall that in our setting, $\check{\varepsilon}(s)$ can be chosen to be the success probability of the trial at time $s$ if we freeze the parameters $\lambda_U$ and $\lambda_V$ at time $s$, and $E'$ can be chosen to be $\check{\varepsilon}(t)$. So, we have $E'=\smallo(1)$ as $\lambda\to\infty$. The variable $\tilde{L}(s)$ is distributed as the discrete return time to $u$ of the (time-inhomogeneous) Markov chain conditioned on the event $T^\circlearrowleft_u<T_v$. If we would not have the time-inhomogeneity, then our study of the time-homogeneous setting in \cite{HolNarTaa16} would imply that $\xExp_u[\tilde{L}(s)]=1+\smallo(1)$ as $\lambda\to\infty$. We expect that time-inhomogeneity does not really affect this estimate and that $\tilde{L}(s)$ remains close to $1$ with high probability and in expectation, even if conditioned on $\rv{\xi}$. In the following proposition, $\varepsilon$ can be chosen to be $E'$ from Proposition~\ref{prop:abstract:upper-bound}.

\begin{proposition}[\textbf{Simplified upper bound}]
\label{prop:upper-bound:simplified}
Let $t=M(\lambda)\tau$, where $M(\lambda)\to\infty$ as $\lambda\to\infty$ and $\tau\geq 0$ is a constant. Let $\varepsilon>0$ (depending on $\lambda$) be such that $\varepsilon=\smallo(1)$ as $\lambda\to\infty$, and set $r\isdef\frac{1}{1-\varepsilon}$. Suppose that $C\geq 1$ is an integer (possibly depending on $\lambda$) and $\Xi_t=\Xi_t(\lambda)$ is a measurable set such that
\begin{enumerate}[label={\rm(\alph*)}]
\item $\varepsilon\, C=\smallo(1)$ as $\lambda\to\infty$,
\item $\xExp_u[\tilde{L}(s)\indicator{\tilde{L}(s)\leq C+1}\,|\,\rv{\xi}=\xi]=1+\smallo(1)$ as $\lambda\to\infty$,
\item $C\xPr_u(\tilde{L}(s)>C+1\,|\,\rv{\xi}=\xi)=\smallo(1)$ as $\lambda\to\infty$,
\item $\sup_{x\notin\{u,v\}}\xPr_x(\tilde{L}(s)>C\,|\,\rv{\xi}=\xi)=\smallo(1)$ as $\lambda\to\infty$,
\end{enumerate}
uniformly in $\xi\in\Xi_t$ and $s\in\bar{\xi}\cap[0,t]$. Then $\xExp_u[r^{\tilde{L}(s)-1}\,|\,\rv{\xi}=\xi]\leq 1+\varepsilon\,\smallo(1)$ as $\lambda\to\infty$, uniformly in $\xi\in\Xi_t$ and $s\in\bar{\xi}\cap[0,t]$.
\end{proposition}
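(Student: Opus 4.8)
The plan is to prove the equivalent statement that $\xExp_u\big[r^{\tilde{L}(s)-1}-1\,\big|\,\rv{\xi}=\xi\big]=\varepsilon\,\smallo(1)$ as $\lambda\to\infty$, uniformly in $\xi\in\Xi_t$ and $s\in\bar{\rv{\xi}}\cap[0,t]$ (the left-hand side is nonnegative since $\tilde{L}(s)\geq 1$ and $r>1$). I would start from the elementary bounds $r-1=\tfrac{\varepsilon}{1-\varepsilon}\leq 2\varepsilon$ and, using assumption~(a), $r^{C}\leq\ee^{2\varepsilon C}=1+\smallo(1)$, so that $r^{(m+1)C}\leq 2^{m+1}$ for every $m\geq 0$ once $\lambda$ is large, and then split the expectation according to whether $\tilde{L}(s)\leq C+1$ or $\tilde{L}(s)>C+1$.

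For the first event the calculation is routine: since $r^{k}-1\leq\ee^{2\varepsilon k}-1\leq 4\varepsilon k$ for $0\leq k\leq C$ (valid because $\varepsilon C=\smallo(1)$), one gets $\big(r^{\tilde{L}(s)-1}-1\big)\indicator{\tilde{L}(s)\leq C+1}\leq 4\varepsilon\big(\tilde{L}(s)-1\big)\indicator{\tilde{L}(s)\leq C+1}$, and taking $\xExp_u[\,\cdot\mid\rv{\xi}=\xi\,]$ one writes $\xExp_u[(\tilde{L}(s)-1)\indicator{\tilde{L}(s)\leq C+1}]=\xExp_u[\tilde{L}(s)\indicator{\tilde{L}(s)\leq C+1}]-\xPr_u(\tilde{L}(s)\leq C+1)$; assumption~(b) makes the first summand $1+\smallo(1)$, while assumption~(c) gives $\xPr_u(\tilde{L}(s)\leq C+1)=1-\smallo(1)$ (as $\xPr_u(\tilde{L}(s)>C+1\mid\rv{\xi}=\xi)\leq\smallo(1)/C$), so this contribution is $\leq 4\varepsilon\,\smallo(1)=\varepsilon\,\smallo(1)$.

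The work is in the second event. Write $Q(j)\isdef\xPr_u(\tilde{L}(s)>j\mid\rv{\xi}=\xi)$. The key input is a geometric tail estimate $Q(mC+1)\leq p^{\,m-1}\,Q(C+1)$ for all $m\geq 1$, where $p=\smallo(1)$ is the supremum appearing in assumption~(d) (taken also over the base time); combined with~(c) this gives $Q(mC+1)\leq p^{\,m-1}\,\smallo(1)/C$. I would obtain the tail estimate by a restart argument on the embedded discrete chain: recalling that, given $\rv{\xi}$, $\tilde{L}(s)$ is distributed as the number of ticks for the chain started at $u$ to return to $u$, conditioned on returning to $u$ before hitting $v$, the event $\{\tilde{L}(s)>mC+1\}$ forces the chain to be at some $x\notin\{u,v\}$ after $mC+1$ ticks, and by the Markov property the residual conditioning ``return to $u$ before $v$'' depends only on the current state, so the conditional probability of surviving $C$ further ticks is bounded by $p$; iterating $m-1$ times from step $C+1$ yields the estimate. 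Then I would apply summation by parts,
\begin{equation*}
\xExp_u\big[\big(r^{\tilde{L}(s)-1}-1\big)\indicator{\tilde{L}(s)>C+1}\,\big|\,\rv{\xi}=\xi\big]
=\big(r^{C+1}-1\big)\,Q(C+1)+(r-1)\sum_{j\geq C+2}r^{\,j-1}Q(j),
\end{equation*}
the series converging because the geometric decay of $Q(j)$ beats the growth of $r^{j}$ once $\varepsilon C\prec|\log p|$. The boundary term is $\leq 8\varepsilon C\cdot\smallo(1)/C=\varepsilon\,\smallo(1)$, using $r^{C+1}-1\leq 8\varepsilon C$ and~(c). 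For the series, grouping $j\in\{mC+2,\dots,(m+1)C+1\}$ for $m\geq 1$ (blocks of $C$ terms), each block contributes at most $C\cdot 2^{m+1}\cdot p^{\,m-1}\smallo(1)/C=2^{m+1}p^{\,m-1}\smallo(1)$, and $\sum_{m\geq 1}2^{m+1}p^{\,m-1}=\tfrac{4}{1-2p}=\bigo(1)$; hence $\sum_{j\geq C+2}r^{j-1}Q(j)=\smallo(1)$ and, multiplied by $r-1\leq 2\varepsilon$, the series term is $\varepsilon\,\smallo(1)$ as well. Adding the two contributions finishes the proof.

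I expect the main obstacle to be the restart argument, precisely because $\tilde{L}(s)$ carries the conditioning $B(s)=0$ — an event about the whole future path — so one has to check carefully that after restarting at a state $x\notin\{u,v\}$ the residual conditioning is only ``from $x$, reach $u$ before $v$'', and that assumption~(d) is invoked at the correct base time (the tick at which the restart takes place), which is why~(d) is stated uniformly in the base time over $[0,t]$. A second point worth emphasising is that a crude bound on the $\{\tilde{L}(s)>C+1\}$ contribution — replacing $r^{\tilde{L}(s)-1}-1$ by $r^{\tilde{L}(s)-1}$ — would only give $\smallo(1)$ (or $\smallo(1)/C$ with~(c)), not $\varepsilon\,\smallo(1)$; the extra factor $\varepsilon$ is produced by summation by parts, which turns the weight $r^{j}$ into $(r-1)r^{j-1}$ with $r-1\asymp\varepsilon$ and isolates a boundary term that is then absorbed by the $1/C$ gain in~(c). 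Finally, all the $\smallo(1)$'s are uniform in $\xi\in\Xi_t$ and $s$ because the hypotheses (a)--(d) are.
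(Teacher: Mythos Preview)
Your proposal is correct and follows essentially the same approach as the paper's proof: split at the threshold $C$, handle the small part via $r^k-1\preceq\varepsilon k$ together with assumption~(b), and handle the large part by a telescoping/summation-by-parts identity that extracts the factor $r-1\asymp\varepsilon$, combined with a geometric tail bound on $\xPr_u(\tilde{L}(s)>mC+1\mid\rv{\xi})$ obtained by iterating assumption~(d) via the Markov property. The paper's version differs only in cosmetic details (it writes $r^\ell=r^C+(r-1)\sum_{k=C}^{\ell-1}r^k$ and bounds the resulting sum by $C\,\xPr_u(\Delta>C\mid\rv{\xi})\,r^{2C}\sum_{i\geq 0}\delta^i r^{iC}$, whereas you group the tail sum into blocks of $C$ indices), and in fact the paper glosses over the very point you flag as the main obstacle---that the restart argument must respect the conditioning on $B(s)=0$---simply asserting the inequality $\xPr_u(\Delta(s)>C+iC+j\mid\rv{\xi})\leq\xPr_u(\Delta(s)>C\mid\rv{\xi})\,\delta^i$ without further comment.
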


\begin{proof}
Throughout the proof we assume that $\rv{\xi}\in\Xi_t$. Abbreviate $\Delta(s)\isdef\tilde{L}(s)-1$. The idea is to break down the possibilities according to whether $\Delta$ is small or large:
\begin{itemize}
\item 
For $\Delta$ small, we have $r^\Delta=\big(\frac{1}{1-\varepsilon}\big)^\Delta\approx 1+\varepsilon\Delta$, which on average is $1+\varepsilon\,\smallo(1)$.
\item 
For $\Delta$ large, the exponential tail of the distribution of $\Delta$ cancels the exponential $r^\Delta$.
\end{itemize}
To make this rigorous, we write
\begin{equation}
\xExp_u[r^{\Delta(s)}\,|\,\rv{\xi}] = \xExp_u[r^{\Delta(s)}\,\indicator{\Delta(s)\leq C}\,|\,\rv{\xi}] 
+ \xExp_u[r^{\Delta(s)}\,\indicator{\Delta(s)> C}\,|\,\rv{\xi}]
\end{equation}
and estimate each term separately.
	
\begin{lemma}[\textbf{$\Delta$ small}]
\label{lem:small}
$\xExp_u[r^{\Delta(s)}\,\indicator{\Delta(s)\leq C}\,|\,\rv{\xi}]\leq \xPr_u\big(\Delta(s)\leq C\,\big|\,\rv{\xi}\big) + \varepsilon\,\smallo(1)$ as $\lambda\to\infty$.
\end{lemma}
	
\begin{proof}[Proof of Lemma~\ref{lem:small}]
For $x\geq-1$ and $k\geq 1$, we have $(1+x)^k\geq 1+kx$. Therefore
\begin{equation}
(1-\varepsilon)^{-\Delta(s)} \leq \frac{1}{1-\varepsilon\Delta(s)} 
= 1 + \varepsilon\Delta(s)[1 + \smallo(1)].
\end{equation}
So,
\begin{equation}		
\begin{aligned}
\xExp_u[r^{\Delta(s)}\,\indicator{\Delta(s)\leq C}\,|\,\rv{\xi}] 
&\leq \sum_{\ell=0}^C \xPr_u\big(\Delta(s)=\ell\,\big|\,\rv{\xi}\big)
\big(1+\varepsilon\,\ell\,[1+\smallo(1)]\big) \\
&\leq \xPr_u\big(\Delta(s)\leq C\,\big|\,\rv{\xi}\big)
+ \varepsilon\xExp_u[\Delta(s)\indicator{\Delta(s)\leq C}\,|\,\rv{\xi}]\,[1+\smallo(1)] \\
&= \xPr_u\big(\Delta(s)\leq C\,\big|\,\rv{\xi}\big) + \varepsilon\,\smallo(1), \qquad \lambda\to\infty.
\end{aligned}
\end{equation}
\end{proof}

\begin{lemma}[\textbf{$\Delta$ large}]
\label{lem:large}
$\xExp_u[r^{\Delta(s)}\,\indicator{\Delta(s)> C}\,|\,\rv{\xi}]\leq\xPr_u\big(\Delta(s)> C\,\big|\,\rv{\xi}\big) 
+ \varepsilon\,\smallo(1)$ as $\lambda\to\infty$.
\end{lemma}

\begin{proof}[Proof of Lemma~\ref{lem:large}]
We start with
\begin{equation}
\xExp_u[r^{\Delta(s)}\,\indicator{\Delta(s)> C}\,|\,\rv{\xi}] 
= \sum_{\ell>C} \xPr_u\big(\Delta(s)=\ell\,\big|\,\rv{\xi}\big)\,r^\ell.
\end{equation}
Writing $r^\ell$ telescopically as
\begin{equation}
r^\ell = r^C + \sum_{k=C}^{\ell-1}(r^{k+1}-r^k)
= r^C + (r-1)\sum_{k=C}^{\ell-1}r^k,
\end{equation}
we get
\begin{equation}
\begin{aligned}
\xExp_u[r^{\Delta(s)}\,\indicator{\Delta(s)> C}\,|\,\rv{\xi}] 
&= \xPr_u\big(\Delta(s)>C\,\big|\,\rv{\xi}\big)\,r^C 
+ \sum_{\ell>C}\xPr_u\big(\Delta(s)=\ell\,\big|\,\rv{\xi}\big)(r-1)\sum_{k=C}^{\ell-1}r^k \\
&= \xPr_u\big(\Delta(s)>C\,\big|\,\rv{\xi}\big)\,r^C 
+ (r-1)\sum_{k\geq C}r^k\sum_{\ell>k}\xPr_u\big(\Delta(s)=\ell\,\big|\,\rv{\xi}\big) \\
&= \xPr_u\big(\Delta(s)>C\,\big|\,\rv{\xi}\big)\,r^C + (r-1)
\sum_{k\geq C}\xPr_u\big(\Delta(s)>k\,\big|\,\rv{\xi}\big)\,r^k.
\end{aligned}
\end{equation}
For the first term, by the argument for the previous claim, $r^C=(1-\varepsilon)^{-C}\leq 1/(1-\varepsilon C)=1+\varepsilon C[1+\smallo(1)]$. Therefore
\begin{equation}		
\begin{aligned}
\xPr_u\big(\Delta(s)>C\,\big|\,\rv{\xi}\big)\,r^C 
&\leq \xPr_u\big(\Delta(s)>C\,\big|\,\rv{\xi}\big)
+ \varepsilon\,C\xPr_u\big(\Delta(s)>C\,\big|\,\rv{\xi}\big) [1+\smallo(1)] \\
&= \xPr_u\big(\Delta(s)>C\,\big|\,\rv{\xi}\big) + \varepsilon\,\smallo(1), 
\qquad \lambda\to\infty.
\end{aligned}
\end{equation}
Since $r-1=\varepsilon[1+\smallo(1)]$, it remains to show that $\sum_{k\geq C}\xPr_u(\Delta(s)>k\,\big|\,\rv{\xi})r^k=\smallo(1)$. To this end, note that
\begin{align}
\delta &\isdef \sup_{\xi\in\Xi_t}\sup_{s\in\bar{\xi}\cap[0,t]}\sup_{x\notin\{u,v\}}
\xPr_x\big(\tilde{L}(s)>C\,\big|\,\rv{\xi}=\xi\big) = \smallo(1),
\qquad \lambda\to\infty.
\end{align}
Slicing time into intervals of length $C$ and using the Markov property, we get
\begin{equation}
\xPr_u\big(\Delta(s)>C+iC+j\,\big|\,\rv{\xi}\big) \leq \xPr_u\big(\Delta(s)>C\,\big|\,\rv{\xi}\big)\,\delta^i
\end{equation}
for every $i,j\geq 0$. Therefore
\begin{equation}
\begin{aligned}
\sum_{k\geq C}\xPr_u\big(\Delta(s)>k\,\big|\,\rv{\xi}\big)r^k 
&= \sum_{i\in\NN_0} \sum_{j=0}^{C-1} \xPr_u\big(\Delta(s)>C+iC+j\,\big|\,\rv{\xi}\big)\,r^{C+iC+j} \\
&\leq \xPr_u\big(\Delta(s)>C\,\big|\,\rv{\xi}\big)\,r^C \sum_{i\in\NN_0}\delta^i r^{iC} \sum_{j=0}^{C-1}r^j \\
&\leq C\xPr_u\big(\Delta(s)>C\,\big|\,\rv{\xi}\big)\,r^{2C} \sum_{i\in\NN_0}\delta^i r^{i C}.
\end{aligned}
\end{equation}
Since $r^C=1+\varepsilon C[1+\smallo(1)]=1+\smallo(1)$ and $\delta=\smallo(1)$, it follows that 
\begin{equation}
\sum_{i\in\NN_0} \delta^i r^{i C} = \frac{1}{1-\delta r^C} = 1+\smallo(1), \qquad \lambda\to\infty.
\end{equation}
We also have $r^{2C}=1+\smallo(1)$. Finally, recall that
\begin{align}
C\xPr_u\big(\Delta(s)>C\,\big|\,\rv{\xi}\big)
&= \smallo(1), \qquad \lambda\to\infty.
\end{align}
Altogether, we find that
\begin{equation}
\sum_{k\geq C}\xPr_u\big(\Delta(s)>k\,\big|\,\rv{\xi}\big)r^k = \smallo(1), \qquad \lambda\to\infty.
\end{equation}
\end{proof}
Lemmas \ref{lem:small}--\ref{lem:large} complete the proof of Proposition \ref{prop:upper-bound:simplified}.
\end{proof}


\subsection{Summary and conditions}
\label{S3.3}

In this section we put together the results obtained so far to prove the middle case of identity~\eqref{eq:main-result} subject to the validity of certain hypotheses. These hypotheses will be evaluated in Section~\ref{S4}, and lead to the proof of Theorem~\ref{thm:main}.

\paragraph*{Short-time regularity conditions and choice of the parameters}

Recall the notation $\delta\hat{T}_v(s)$ for the number of ticks of $\rv{\xi}$ from time $s$ until the first hitting time of $v$, i.e., $\delta\hat{T}_v(s)\isdef\rv{\xi}((s,T_v(s)])$, and similarly $\delta\hat{T}^\circlearrowleft_u(s)\isdef\rv{\xi}((s,T^\circlearrowleft_u(s)])$. With a similar notation, the random variable $L(s)$ introduced earlier is the same as $\delta\hat{T}^\circlearrowleft_{\{u,v\}}(s)$, and its modified version satisfies
\begin{align}
\xPr\big(\tilde{L}(s)\in\cdot\,\big|\,\rv{\xi}\big) 
&= \xPr_u\big(\delta\hat{T}^\circlearrowleft_u\in\cdot\,\big|\,\rv{\xi},\,\delta\hat{T}^\circlearrowleft_u(s)
<\delta\hat{T}_v(s)\big).
\end{align}
Let
\begin{align}
\check{\varepsilon}(s) 
&\isdef \xPr_u^{(s)}(T_v<T^\circlearrowleft_u)
\end{align}
be the probability that the time-homogeneous Markov chain with parameters $\lambda_U(s)$ and $\lambda_V(s)$ starting from $u$ hits $v$ before returning to $u$.

As before, we consider the time scaling $t=M(\lambda)\tau$, where $M(\lambda)\to\infty$ as $\lambda\to\infty$ and $\tau\geq 0$. Combining Propositions~\ref{prop:abstract:lower-bound}, \ref{prop:abstract:upper-bound}, \ref{prop:lower-bound:typical} and \ref{prop:upper-bound:simplified}, we see that it remains to verify the following conditions for suitable choices of the parameters $C=C(\lambda)\in\ZZ^+$, $k=k(\lambda)\in\ZZ^+$, $m=m(\lambda)\in\ZZ^+$ and $\delta s=\delta s(\lambda)\in\RR^+$ and a measurable set $\Xi_{M\tau}$ satisfying $\xPr(\rv{\xi}\in\Xi_{M\tau})\to 1$ as $\lambda\to\infty$:

\medskip

\noindent
\uline{\textsl{Short-time regularity conditions:}}\nopagebreak
\begin{enumerate}[label={\protect\itemboxed{\rm\Roman*}}] 
\begin{samepage}	
\item 
\label{item:task:truncated-mean}
$\xExp\big[\delta\hat{T}^\circlearrowleft_u(s)\indicator{\delta\hat{T}^\circlearrowleft_u(s)\leq C+1}\,\big|\,\rv{\xi}=\xi,\,
X(s)=u,\,\delta\hat{T}^\circlearrowleft_u(s)<\delta\hat{T}_v(s)\big] = 1+\smallo(1)$ as $\lambda\to\infty$,
uniformly in $\xi\in\Xi_{M\tau}$ and $s\in\bar{\xi}\cap[0,M\tau]$.
\end{samepage}
\item 
\label{item:task:conditional-return}
$C\xPr\big(\delta\hat{T}^\circlearrowleft_u(s)>C+1\,\big|\,\rv{\xi}=\xi,\,X(s)=u,\,\delta\hat{T}^\circlearrowleft_u(s)<\delta\hat{T}_v(s)\big)
= \smallo(1)$ as $\lambda\to\infty$,\\
uniformly in $\xi\in\Xi_{M\tau}$ and $s\in\bar{\xi}\cap[0,M\tau]$.
\item 
\label{item:task:conditional-u}
$\sup_{x\notin\{u,v\}}\xPr\big(\delta\hat{T}_u(s)>C\,\big|\,\rv{\xi}=\xi,\,X(s)=x,\,\delta\hat{T}_u(s)<\delta\hat{T}_v(s)\big)
= \smallo(1)$ as $\lambda\to\infty$,\\
uniformly in $\xi\in\Xi_{M\tau}$ and $s\in\bar{\xi}\cap[0,M\tau]$.
\item 
\label{item:task:conditional-v}
$\xPr\big(\delta\hat{T}_v(s)>k\,\big|\,
\rv{\xi}=\xi,X(s)=u,\;\delta\hat{T}_v(s)<\delta\hat{T}^\circlearrowleft_u(s)\big) = \smallo(1)$ as $\lambda\to\infty$,\\ 
uniformly in $\xi$ and $s\in\bar{\xi}\cap[0,M\tau]$ satisfying $\xi\big((s,s+\delta s)\big)\geq k$,
\item 
\label{item:task:last-excursion}
For every sequence $(\lambda_n)_{n\in\NN}$ going to infinity, there exists a subsequence $(\lambda_{n(i)})_{i\in\NN}$ such that, for all but at most countably many values $\tau\in[0,\infty)$, $\xPr_u(S\leq M\tau<T_v)=\smallo(1)$ when $\lambda\isdef\lambda_{n(i)}$ and $i\to\infty$.
\end{enumerate}

\smallskip

\noindent
\uline{\textsl{Choice of the parameters:}}
\begin{enumerate}[label={\protect\itemcircled{\rm\roman*}}] 
\begin{samepage}
\item 
\label{item:task:parameter:C}
$\check{\varepsilon}(t) C=\smallo(1)$.
\end{samepage}
\item 
\label{item:task:parameter:delta-s:gamma}
$\gamma(s+\delta s)=\gamma(s)[1+\smallo(1)]$ uniformly in $s\in[0,M\tau]$.
\item 
\label{item:task:parameter:delta-s:small}
$\gamma(M\tau)\check{\varepsilon}(M\tau+\delta s)\delta s=\smallo(1)\int_0^{M\tau} 
\check{\varepsilon}(s)\gamma(s)\,\dd s$.
\item 
\label{item:task:parameter:k}
$\delta_{M\tau}(m)\isdef\xPr(N_{M\tau}>m)=\smallo(1)$, where $N_t$ is the number of points $s\in\bar{\rv{\xi}}\cap[0,t]$
for which $\rv{\xi}\big((s,s+\delta s)\big)<k$.
\item 
\label{item:task:parameter:E-vs-m}
$\big(1-\check{\varepsilon}(t)\big)^m=1-\smallo(1)$.
\end{enumerate}

\paragraph*{Summary}

The following proposition summarizes our results so far.

\begin{proposition}[\textbf{Law of crossover time}]
\label{prop:remaining-tasks}
Let $M(\lambda)$ be a time scale with $M(\lambda)\to\infty$ as $\lambda\to\infty$.  Suppose that $C,k,m\in\ZZ^+$, $\delta s\in\RR^+$ and a measurable set $\Xi_{M\tau}$ with $\xPr(\rv{\xi}\in\Xi_{M\tau})=\smallo(1)$ as $\lambda\to\infty$ can be chosen (each possibly depending on $\lambda$) such that the above Conditions~\ref{item:task:parameter:C}--\ref{item:task:parameter:E-vs-m} and \ref{item:task:truncated-mean}--\ref{item:task:last-excursion} are satisfied. Then, for every sequence $(\lambda_n)_{n\in\NN}$ going to infinity, there exists a subsequence $(\lambda_{n(i)})_{i\in\NN}$ such that, for $\lambda\isdef\lambda_{n(i)}$ and all but at most countably many values $\tau\in[0,\infty)$ satisfying $\check{\varepsilon}(M\tau)\int_0^{M\tau}\gamma(s)\dd s = \bigo(1)$,
\begin{equation}
\label{eq:sandwich}
\begin{aligned}
\xPr_u(T_v>M\tau) 
&\leq [1-\smallo(1)]\,\ee^{-\textstyle{\int_0^{M\tau}\check{\varepsilon}(s)\gamma(s)\dd s}}
+ \smallo(1) \;, \quad &i\to\infty\;,\\[0.2cm]
\xPr_u(T_v>M\tau) 
 &\geq [1-\smallo(1)]\,\ee^{-\textstyle{\int_0^{M\tau}\check{\varepsilon}(s)\gamma(s)\dd s}}\;,
&i\to\infty.
\end{aligned}
\end{equation}
\end{proposition}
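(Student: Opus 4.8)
The plan is to establish the two-sided estimate~\eqref{eq:sandwich} first for the success time $S$ and then transfer it to $T_v$. Write $t = M\tau$. Since $T_v = S + \delta T(S)$ with $\delta T(S)\geq 0$, one has $\{S>t\}\subseteq\{T_v>t\}$ and $\xPr_u(T_v>t)=\xPr_u(S>t)+\xPr_u(S\leq t<T_v)$, and condition~\ref{item:task:last-excursion} furnishes, for any prescribed sequence $\lambda_n\to\infty$, a subsequence along which the last term is $\smallo(1)$ for all but countably many $\tau$. So, after fixing such a subsequence, it suffices to prove that $\xPr_u(S>t)$ obeys the bounds in~\eqref{eq:sandwich}; the additive $\smallo(1)$ on the lower side is then free to absorb, because $\ee^{-\int_0^{t}\check{\varepsilon}\gamma}$ stays bounded away from $0$ on the set of $\tau$ with $\check{\varepsilon}(t)\int_0^{t}\gamma=\bigo(1)$ (using that $\check{\varepsilon}$ is non-decreasing, so $\int_0^{t}\check{\varepsilon}\gamma\leq\check{\varepsilon}(t)\int_0^{t}\gamma=\bigo(1)$). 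From here on the work is purely to verify the hypotheses of the abstract Propositions~\ref{prop:abstract:lower-bound} and~\ref{prop:abstract:upper-bound}.

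For the upper bound I would apply Proposition~\ref{prop:abstract:upper-bound}. The monotone coupling of Section~\ref{sec:intro:formulation} (using that $\lambda_U(\cdot)$ is non-increasing and $\lambda_V(\cdot)$ non-decreasing, so a trial started at $s$ dominates in the $\sqsubseteq$-order the one run with the parameters frozen at $s$, and that $\{\text{hit }v\text{ before returning to }u\}$ is an increasing event) gives $\xPr(B(s)=1\mid\rv{\xi})\geq\check{\varepsilon}(s)$ a.s.; since $\check{\varepsilon}$ is non-decreasing we set $E'\isdef\check{\varepsilon}(t)=\smallo(1)$. The required function $\rho$ comes from Proposition~\ref{prop:upper-bound:simplified} with $\varepsilon\isdef E'$: its hypotheses~(a)--(d) are exactly conditions~\ref{item:task:parameter:C} and \ref{item:task:truncated-mean}--\ref{item:task:conditional-u} (after the identification of $\tilde{L}(s)$ with the conditioned return/exit time $\delta\hat{T}^\circlearrowleft_u(s)$), and the conclusion is $\rho\isdef 1+\check{\varepsilon}(t)\theta(\lambda)$ with $\theta(\lambda)\to 0$, uniformly over $\xi\in\Xi_{t}$ and $s\in\bar{\xi}\cap[0,t]$. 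Then $\int_0^{t}(\rho-1)\gamma\,\dd s=\theta(\lambda)\,\check{\varepsilon}(t)\int_0^{t}\gamma=\theta(\lambda)\,\bigo(1)=\smallo(1)$, so the prefactor $\hat{K}$ in Proposition~\ref{prop:abstract:upper-bound} is $1+\smallo(1)$; combined with $\xPr(\rv{\xi}\notin\Xi_{t})=\smallo(1)$ this yields $\xPr_u(S>t)\leq[1+\smallo(1)]\,\ee^{-\int_0^{t}\check{\varepsilon}\gamma}+\smallo(1)$, the upper line of~\eqref{eq:sandwich}.

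For the lower bound I would apply Proposition~\ref{prop:abstract:lower-bound}, taking $\Xi_{t}$ to be the entire path space (so the error term $\xPr(\rv{\xi}\notin\Xi_{t})$ disappears) and $E\isdef\check{\varepsilon}(t)$, which is legitimate since, arguing as in the proof of Proposition~\ref{prop:lower-bound:typical}, monotonicity against the parameters frozen at time $t$ — here the truncation~\eqref{eq:assumption:freezing-at-the-end} is convenient, making $\lambda_U(\cdot)\geq\lambda_U(t)$ and $\lambda_V(\cdot)\leq\lambda_V(t)$ on all of $[s,\infty)$ — gives $\xPr(B(s)=1\mid\rv{\xi})\leq\check{\varepsilon}(t)$ a.s. For the typical bound $\hat{\varepsilon}$ I would invoke Proposition~\ref{prop:lower-bound:typical}, whose sole hypothesis is condition~\ref{item:task:conditional-v}: it gives $\xPr(B(s)=1\mid\rv{\xi})=\check{\varepsilon}(s+\delta s)[1+\smallo(1)]$ uniformly over $\xi$ with $\rv{\xi}((s,s+\delta s))\geq k$, so I set $\hat{\varepsilon}(s)\isdef\check{\varepsilon}(s+\delta s)[1+\theta'(\lambda)]$ with $\theta'(\lambda)\to 0$. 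With this choice the count $N_{t}$ is dominated by the number of $s\in\bar{\rv{\xi}}\cap[0,t]$ with $\rv{\xi}((s,s+\delta s))<k$, so $\delta_{t}(m)=\smallo(1)$ by condition~\ref{item:task:parameter:k}, while $(1-E)^m=(1-\check{\varepsilon}(t))^m=1-\smallo(1)$ by condition~\ref{item:task:parameter:E-vs-m}. Finally, conditions~\ref{item:task:parameter:delta-s:gamma} and~\ref{item:task:parameter:delta-s:small} are used to absorb the $\delta s$-shift in the exponent, giving $\int_0^{t}\hat{\varepsilon}\gamma=\int_0^{t}\check{\varepsilon}\gamma+\smallo(1)$ and $\int_0^{t}\hat{\varepsilon}^2\gamma=\smallo(1)$ (again with $\check{\varepsilon}(t)\int_0^{t}\gamma=\bigo(1)$). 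Substituting these into the definition of $\check{K}(m)$ gives $\check{K}(m)=1-\smallo(1)$, and Proposition~\ref{prop:abstract:lower-bound} then yields $\xPr_u(S>t)\geq[1-\smallo(1)]\,\ee^{-\int_0^{t}\check{\varepsilon}\gamma}$, the lower line of~\eqref{eq:sandwich}.

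Combining the three parts and undoing the reduction to $S$ completes the proof. The step I expect to be the main obstacle is the reconciliation of the two exponents: the lower bound intrinsically carries $\int_0^{t}\check{\varepsilon}(s+\delta s)\gamma(s)\,\dd s$ while the upper bound carries $\int_0^{t}\check{\varepsilon}(s)\gamma(s)\,\dd s$, and showing these are asymptotically equal is precisely the purpose of conditions~\ref{item:task:parameter:delta-s:gamma}--\ref{item:task:parameter:delta-s:small}; the genuinely delicate matter — hidden inside those conditions and postponed to Section~\ref{S4} — is to exhibit a scale $\delta s$ that is at once long enough that typically at least $k$ clock ticks fall in each window $(s,s+\delta s)$, with $k$ itself large enough that a trial ends before $s+\delta s$ with high probability (condition~\ref{item:task:conditional-v}), yet short enough to be negligible against $\int_0^{t}\check{\varepsilon}\gamma$. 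The passage to a subsequence and the countably many exceptional $\tau$ are inherited verbatim from condition~\ref{item:task:last-excursion}.
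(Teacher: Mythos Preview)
Your proposal is correct and follows essentially the same route as the paper's own proof: the same reduction from $T_v$ to $S$ via $\xPr_u(T_v>t)=\xPr_u(S>t)+\xPr_u(S\leq t<T_v)$ together with condition~\ref{item:task:last-excursion} for the upper bound and $T_v\geq S$ for the lower bound; the same invocation of Propositions~\ref{prop:abstract:upper-bound}+\ref{prop:upper-bound:simplified} with $E'=\check{\varepsilon}(t)$ for the upper bound and Propositions~\ref{prop:abstract:lower-bound}+\ref{prop:lower-bound:typical} with $E=\check{\varepsilon}(t)$, $\Xi_t$ the full space, and $\hat{\varepsilon}(s)=\check{\varepsilon}(s+\delta s)[1+\smallo(1)]$ for the lower bound; and the same use of \ref{item:task:parameter:delta-s:gamma}--\ref{item:task:parameter:delta-s:small} to reconcile the shifted and unshifted exponents. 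The paper even singles out, as you do, that the subsequence and the countable exceptional set are needed only for the upper bound.
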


\begin{proof}
We establish the upper and the lower bounds separately.  The restriction to a subsequence $(\lambda_{n(i)})_{i\in\NN}$ and a co-countable set of values $\tau\in[0,\infty)$ is needed only for the upper bound, which relies on~\ref{item:task:last-excursion}.  The lower bound holds for every $\tau\in[0,\infty)$ as $\lambda\to\infty$.
	
\smallskip
\noindent\uline{\textsl{Upper bound}}.
By~\ref{item:task:last-excursion}, for every sequence $(\lambda_n)_{n\in\NN}$ going to infinity, there exists a subsequence $(\lambda_{n(i)})_{i\in\NN}$ such that as $i\to\infty$, for all but countably many values $\tau\in[0,\infty)$,
\begin{align}
\xPr_u(T_v>M\tau) &= \xPr(S>M\tau) + \xPr_u(S\leq M\tau<T_v)
= \xPr(S>M\tau) + \smallo(1) \;.
\end{align}
To bound $\xPr(S>M\tau)$, we apply Propositions~\ref{prop:abstract:upper-bound} and~\ref{prop:upper-bound:simplified}. We choose $E'\isdef\check{\varepsilon}(t)$. The condition $\check{\varepsilon}(s)\leq E'$ will then be satisfied for each $0\leq s\leq t$ by monotonicity (see the paragraph at the end of Section~\ref{sec:intro:formulation}). By~\ref{item:task:parameter:C}, \ref{item:task:truncated-mean}, \ref{item:task:conditional-return} and~\ref{item:task:conditional-u}, the conditions of Proposition~\ref{prop:upper-bound:simplified} are satisfied with $\varepsilon\isdef\check{\varepsilon}(t)$, and thus $\xExp_u[r^{\tilde{L}(s)-1}\,|\,\rv{\xi}=\xi]\leq \rho(s)\isdef 1+\check{\varepsilon}(t)\,\smallo(1)$ uniformly for $\xi\in\Xi_t$ and $s\in\bar{\xi}\cap[0,t]$, where $r\isdef\frac{1}{1-E'}$. Therefore, the conditions of Proposition~\ref{prop:abstract:upper-bound} are satisfied. Observe that
\begin{align}
\hat{K} 
&= \rho(0)\big(1-\check{\varepsilon}(0)\big)
\ee^{\,\textstyle{\int_0^t\big(\rho(s)-1\big)\gamma(s)\dd s}} \\ \nonumber
&= [1+\check{\varepsilon}(t)\,\smallo(1)][1-\smallo(1)]
\ee^{\,\smallo(1)\,\check{\varepsilon}(t)\textstyle{\int_0^t\gamma(s)\dd s}} \\ \nonumber
&= 1-\smallo(1)\,,\qquad\lambda\to\infty\;,
\end{align}
where the last equality uses the hypothesis $\check{\varepsilon}(t)\int_0^{t}\gamma(s)\dd s = \bigo(1)$.
	
\smallskip
\noindent\uline{\textsl{Lower bound}}.
Recall that $T_v=S+\delta T(S)$, and hence $\xPr_u(T_v>t)\geq\xPr(S>t)$. We apply Propositions~\ref{prop:abstract:lower-bound} and~\ref{prop:lower-bound:typical}. Choose $E\isdef\check{\varepsilon}(t)$. The condition $\xPr\big(B(s)=1\,\big|\,\rv{\xi}=\xi\big)\leq E$ for every $\xi$ and $s\in\bar{\xi}\cap[0,t]$ follows from monotonicity, thanks to the assumption made in~\eqref{eq:assumption:freezing-at-the-end}.
	
By Proposition~\ref{prop:lower-bound:typical} and \ref{item:task:conditional-v},
there exists a function $v(\lambda)=\smallo(1)$ such that
\begin{align}
\xPr\big(B(s)=1\,|\,\rv{\xi}=\xi\big) &\leq
\xPr_u^{(s+\delta s)}(T_v<T_u^\circlearrowleft)[1+v(\lambda)] =
\check{\varepsilon}(s+\delta s)[1+v(\lambda)]
\end{align}
for every $\xi$ and $s\in\bar{\xi}\cap[0,t]$ satisfying $\xi\big((s,s+\delta s)\big)\geq k$. Thus, the conditions of Proposition~\ref{prop:abstract:lower-bound} are satisfied with $E=\check{\varepsilon}(t)$ and $\hat{\varepsilon}(s)\isdef\check{\varepsilon}(s+\delta s)[1+v(\lambda)]$, with $\Xi_t$ being the set of all point configurations $\xi$. Hence, we get
\begin{align}
\label{eq:prop:remaining-tasks:proof:lower-bound:1}
\xPr_u(T_v>t) &\geq \xPr(S>t) \\ \nonumber
&\geq \check{K}(m)\, \ee^{-\textstyle{\int_0^t \hat{\varepsilon}(s)\gamma(s)\,\dd s}}
- \smash{\overset{0}{\cancel{\xPr(\rv{\xi}\notin\Xi_t)}}} \\ \nonumber
&= \check{K}(m)\, \ee^{-[1+v(\lambda)]\textstyle{\int_0^t \check{\varepsilon}(s+\delta s)
\gamma(s)\,\dd s}} \\ \nonumber
&= \check{K}(m)[1-\smallo(1)]\, \ee^{-\textstyle{\int_{\delta s}^{t+\delta s} \check{\varepsilon}(s)
\gamma(s-\delta s)\,\dd s}} \\ \nonumber
&\geq \check{K}(m)[1-\smallo(1)]\, \ee^{-[1-\smallo(1)]\textstyle{\int_0^t \check{\varepsilon}(s)\gamma(s)\,\dd s}},
\end{align}
where
\begin{align}
\label{eq:prop:remaining-tasks:proof:lower-bound:2}
\check{K}(m) 
&\isdef \big(1-\hat{\varepsilon}(0)\big) \,
\Big[(1-\check{\varepsilon}(t))^m - \sqrt{\delta_t(m)}\,\ee^{\,\textstyle{\frac{1}{2}\int_0^t
\hat{\varepsilon}(s)^2\gamma(s)\,\dd s}}\Big] \\ \nonumber
&= [1-\smallo(1)]\Big[[1-\smallo(1)]-\smallo(1)\Big] \\ \nonumber
&= 1-\smallo(1) \;.
\end{align}
In~\eqref{eq:prop:remaining-tasks:proof:lower-bound:1}, we have used \ref{item:task:parameter:delta-s:gamma} and~\ref{item:task:parameter:delta-s:small}. The equality in~\eqref{eq:prop:remaining-tasks:proof:lower-bound:2} follows from~\ref{item:task:parameter:C}, \ref{item:task:parameter:delta-s:small}, \ref{item:task:parameter:k} and~\ref{item:task:parameter:E-vs-m}, and the fact that
\begin{align}
\int_0^t\hat{\varepsilon}(s)^2\gamma(s)\,\dd s 
&= [1+v(\lambda)]^2\int_0^t\check{\varepsilon}(s+\delta s)^2\gamma(s)\,\dd s \\ \nonumber
&\asymp \int_{\delta s}^{t+\delta s}\check{\varepsilon}(s)^2\gamma(s-\delta s)\,\dd s \\ \nonumber
&\preceq \int_{\delta s}^{t}\check{\varepsilon}(s)^2\gamma(s-\delta s)\,\dd s +
\check{\varepsilon}(t+\delta s)^2\gamma(t)\delta s \\ \nonumber
&\preceq \check{\varepsilon}(t)\int_{0}^{t}\check{\varepsilon}(s)\gamma(s)\,\dd s +
\check{\varepsilon}(t+\delta s)\gamma(t)\delta s \\ \nonumber
&\preceq \smallo(1)\int_{0}^{t}\check{\varepsilon}(s)\gamma(s)\,\dd s +
\smallo(1)\int_{0}^{t}\check{\varepsilon}(s)\gamma(s)\,\dd s \\ \nonumber
&\preceq \smallo(1)\,\check{\varepsilon}(t)\int_0^{t}\gamma(s)\dd s \\ \nonumber
&= \smallo(1) \;,
\end{align}
where we use the monotonicity of $\check{\varepsilon}(s)$.
\end{proof}

Let us point out that Proposition~\ref{prop:remaining-tasks}  is valid for any choice of the underlying bipartite graph $G$, and any choice of the functions $g_U(\cdot)$ and $g_V(\cdot)$ satisfying $g_V(x)\succ g_V(x)\succ 1$ as $x\to\infty$.


\section{Proof of the conditions}
\label{S4}

In this section we establish \ref{item:task:truncated-mean}--\ref{item:task:last-excursion} for suitable choices of the parameters $C$, $k$, $m$ and $\delta s$ satisfying \ref{item:task:parameter:C}--\ref{item:task:parameter:E-vs-m}. In Section~\ref{sec:tasks:parameters} we simplify Conditions~\ref{item:task:parameter:C}--\ref{item:task:parameter:E-vs-m}, obtaining more explicit conditions for $C$, $k$, $m$ and $\delta s$. In Section~\ref{sec:reghom} we explain why~\ref{item:task:truncated-mean}--\ref{item:task:last-excursion} are expected to be true for suitable choices of $C$, $k$, $m$ and $\delta s$ by examining similar statements~\ref{item:task:truncated-mean:time-homogeneous}--\ref{item:task:last-excursion:time-homogeneous} in the time-homogeneous setting. In Section~\ref{sec:reginhom} we use a coupling argument to show that, with regard to these statements (which all concern short time intervals), the time-inhomogeneous setting behaves similarly as the time-homogeneous setting. In Section~\ref{sec:proofmainth} we put the pieces together and prove Theorem~\ref{thm:main}. 


\subsection{Choice of the parameters}
\label{sec:tasks:parameters}

In this section, we specialize to the particular form of the functions $g_U(\cdot)$ and $g_V(\cdot)$ chosen in Section~\ref{sec:intro:theorem}. The choice of the underlying bipartite graph remains completely arbitrary.

Let us start by recalling the choices
\begin{align}
\label{eq:rates:choice:recall}
\lambda_U(s) 
&= \begin{cases}
(c_U\lambda - \mu_U s)^{\beta_U} & \text{if $s<\frac{c_U}{\mu_U}\lambda$,} \\
0 & \text{otherwise,}
\end{cases}
&\lambda_V(s) 
&= (c_V\lambda + \mu_V s)^{\beta_V},
\end{align}
for $s\leq M\tau$, where $\beta_V>\beta_U>0$. Note that when $s\geq \frac{c_U}{\mu_U}\lambda$, we have $\lambda_U(s)=0$ and $\lambda_V(s) \to\infty$ as $\lambda\to\infty$.  If the crossover has not occurred by time $s = \frac{c_U}{\mu_U}\lambda$, then it will happen in a time of order $\bigo(1)$.  Namely, it will take an exponential time with rate $1$ for each vertex in $U$ to become inactive, independently for different vertices, and once $U$ is completely inactive, the complete activation of $V$ happens in time $\smallo(1)$. Let us therefore focus on the case $s<\frac{c_U}{\mu_U}\lambda$.

\begin{lemma}[\textbf{Choice of parameters}]
\label{lem:tasks:parameters:simplified}
Let $s\leq M\tau$, and let $\lambda_U(s)$ and $\lambda_V(s)$ be as in \eqref{eq:rates:choice:recall}. Consider the time scaling $s=M\sigma$ with $M=M(\lambda)>0$ and $\sigma\in[0,\infty)$, and suppose that either $1\preceq M\prec\lambda$, or $M\asymp\lambda$ and $M\sigma<\frac{c_U}{\mu_U} \lambda$. Then, Conditions~\ref{item:task:parameter:C}--\ref{item:task:parameter:E-vs-m} are met when
\begin{align}
\label{eq:task:parameter:simplified}
C &= \frac{\smallo(1)}{\check{\varepsilon}(M)},
&m &= \frac{\smallo(1)}{\check{\varepsilon}(M)},
&\delta s &= \smallo(M)\;,
&k &\leq \tfrac{1}{2}\left[\inf_{u}\gamma(u)\right] \delta s,
&m\,\delta s &\succ M \;,
\end{align}
as $\lambda\to\infty$.
\end{lemma}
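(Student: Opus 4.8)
The plan is to verify the five conditions~\ref{item:task:parameter:C}--\ref{item:task:parameter:E-vs-m} one at a time by substituting the proposed parameters, leaning on two elementary facts about the rates on $[0,M\tau]$. \emph{Domination by $\lambda_V$:} since $M\preceq\lambda$, one has $\lambda_V(s)=(c_V\lambda+\mu_V s)^{\beta_V}\asymp\lambda^{\beta_V}$ while $\lambda_U(s)=(c_U\lambda-\mu_U s)^{\beta_U}\preceq\lambda^{\beta_U}\prec\lambda^{\beta_V}$, so $\gamma(s)=\abs{V}\bigl(1+\lambda_V(s)\bigr)[1+\smallo(1)]\asymp\lambda^{\beta_V}$ uniformly on $[0,M\tau]$; and because $\delta s=\smallo(M)=\smallo(\lambda)$ with $c_V\lambda+\mu_V s\asymp\lambda$, we get $\lambda_V(s+\delta s)=[1+\smallo(1)]\lambda_V(s)$, hence $\gamma(s+\delta s)=[1+\smallo(1)]\gamma(s)$ uniformly, which is precisely~\ref{item:task:parameter:delta-s:gamma}. \emph{Slow variation of $\check{\varepsilon}$ and $\check{\nu}=\check{\varepsilon}\gamma$ on the scale $M$:} in the case $M\prec\lambda$ this is immediate since $\lambda_U(M\sigma)\sim\lambda_U(0)$ and $\lambda_V(M\sigma)\sim\lambda_V(0)$; in the case $M\asymp\lambda$ it follows from $\lambda_U(M\sigma)\asymp\lambda^{\beta_U}$ and $\lambda_V(M\sigma)\asymp\lambda^{\beta_V}$ (valid since the hypothesis keeps $M\sigma$ away from $\tfrac{c_U}{\mu_U}\lambda$), together with the fact that the critical size $k^*(s)$ is eventually a fixed constant $k^*$, because $\alpha(s)=\tfrac{\log\lambda_V(s)}{\log\lambda_U(s)}-1\to\tfrac{\beta_V}{\beta_U}-1$ uniformly. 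Feeding this into the time-homogeneous asymptotics~\eqref{eq:mean-crossover:asymptotics}--\eqref{eq:time-homogeneous:success-prob-vs-expectation} at frozen parameters gives $\check{\nu}(s)\asymp\lambda_V(s)^{k^*-1}/\lambda_U(s)^{\Delta(k^*)+k^*-1}\asymp\lambda^{\mathrm{const}}$ uniformly on $[0,M\tau]$; in particular $\check{\varepsilon}(M\tau)\asymp\check{\varepsilon}(M\tau+\delta s)\asymp\check{\varepsilon}(M)$, $\check{\nu}(M\tau)\asymp\check{\nu}(M)$, and $\check{\varepsilon}(s)=\smallo(1)$.

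Conditions~\ref{item:task:parameter:C} and~\ref{item:task:parameter:E-vs-m} then follow at once: choosing $C,m=\smallo(1)/\check{\varepsilon}(M)$ with the freedom to also make $C,m\to\infty$, we get $\check{\varepsilon}(M\tau)\,C=\smallo(1)\,\check{\varepsilon}(M\tau)/\check{\varepsilon}(M)=\smallo(1)$, and $m\,\check{\varepsilon}(M\tau)=\smallo(1)$ together with $\check{\varepsilon}(M\tau)=\smallo(1)$ gives $(1-\check{\varepsilon}(M\tau))^m=\exp\bigl(-m\check{\varepsilon}(M\tau)[1+\smallo(1)]\bigr)=1-\smallo(1)$. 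For~\ref{item:task:parameter:delta-s:small}, the left-hand side is $\gamma(M\tau)\check{\varepsilon}(M\tau+\delta s)\,\delta s\asymp\lambda^{\beta_V}\check{\varepsilon}(M)\,\delta s$, while the right-hand side equals $\int_0^{M\tau}\check{\nu}(s)\,\dd s\asymp M\check{\nu}(M)\asymp M\lambda^{\beta_V}\check{\varepsilon}(M)$ (the lower bound using that $\check{\nu}$ is increasing and slowly varying); so~\ref{item:task:parameter:delta-s:small} reduces exactly to $\delta s=\smallo(M)$.

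The remaining and only genuinely probabilistic condition is~\ref{item:task:parameter:k}. Here I would observe that the number of ticks of $\rv{\xi}$ in any window $(s,s+\delta s)$ is Poisson with mean $\int_s^{s+\delta s}\gamma\geq\bigl[\inf_u\gamma(u)\bigr]\delta s\geq 2k$ by the choice of $k$, so a Chernoff estimate for the Poisson lower tail yields $\xPr\bigl(\rv{\xi}((s,s+\delta s))<k\bigr)\leq(2/\ee)^{k}$. Applying the Mecke--Campbell formula to the first moment of $N_{M\tau}$ then gives
\begin{equation*}
\xExp[N_{M\tau}]\;\leq\;1+\int_0^{M\tau}\gamma(s)\,\xPr\bigl(\rv{\xi}((s,s+\delta s))<k\bigr)\,\dd s\;\leq\;1+(2/\ee)^{k}\,\bigo\bigl(M\lambda^{\beta_V}\bigr).
\end{equation*}
Since $k\asymp\bigl[\inf_u\gamma(u)\bigr]\delta s\asymp\lambda^{\beta_V}\delta s$ is a positive power of $\lambda$ --- this is the point where $\delta s$ must not be taken too small, which is exactly what $m\,\delta s\succ M$ (combined with $m=\smallo(1)/\check{\varepsilon}(M)$) guarantees --- the factor $(2/\ee)^{k}$ beats the polynomial $M\lambda^{\beta_V}$, so $\xExp[N_{M\tau}]=\smallo(m)$, and Markov's inequality yields $\delta_{M\tau}(m)=\xPr(N_{M\tau}>m)\leq\xExp[N_{M\tau}]/m=\smallo(1)$.

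The main obstacle is not any single estimate but the mutual compatibility of the parameter choices: one needs $C,m\to\infty$ (for~\ref{item:task:parameter:C},~\ref{item:task:parameter:E-vs-m},~\ref{item:task:parameter:k}) and $\delta s=\smallo(M)$ (for~\ref{item:task:parameter:delta-s:gamma} and~\ref{item:task:parameter:delta-s:small}), yet $\delta s$ large enough that $k\asymp\lambda^{\beta_V}\delta s$ overwhelms the polynomial in the Mecke bound, and these are reconciled precisely by the constraint $m\,\delta s\succ M$; making the juggling explicit (e.g. $\delta s\asymp M/\log\lambda$, $m\asymp C\asymp M\lambda^{\beta_V}/\log\lambda$, $k\asymp\lambda^{\beta_V}M/\log\lambda$ in the critical regime) is the one piece requiring care. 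A secondary technical point is the uniform slow-variation claim for $\check{\varepsilon}$, which rests on the time-homogeneous results of~\cite{HolNarTaa16} at frozen parameters and on the eventual constancy of $k^*(s)$ (a vanishing perturbation of $\beta_V/\beta_U$ handling the borderline case where $k^*$ jumps), and on keeping $M\sigma$ bounded away from $\tfrac{c_U}{\mu_U}\lambda$ when $M\asymp\lambda$ --- exactly the standing hypothesis of the lemma.
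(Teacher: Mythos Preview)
Your verification of~\ref{item:task:parameter:C}, \ref{item:task:parameter:delta-s:gamma}, \ref{item:task:parameter:delta-s:small} and~\ref{item:task:parameter:E-vs-m} is correct and matches the paper's reasoning. There are, however, two points where your approach diverges from the paper's and one of them leaves a real gap.

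\textbf{Slow variation of $\check{\varepsilon}$.} You deduce $\check{\varepsilon}(M\sigma)\asymp\check{\varepsilon}(0)$ from the asymptotic formula~\eqref{eq:mean-crossover:asymptotics}, which presupposes the isoperimetric hypotheses of Assumption~\ref{keyassump} and the eventual constancy of the critical size $k^*(s)$. The paper instead observes that $\check{\varepsilon}(s)=1/\big(\pi^{(s)}(u)\effR[(s)]{u}{v}\big)$ is a rational function of $\lambda_U(s)$ and $\lambda_V(s)$ (both the stationary distribution and the effective resistance solve linear systems with rational coefficients). Since $\lambda_U(M\sigma)$ and $\lambda_V(M\sigma)$ keep their orders of magnitude, so does $\check{\varepsilon}(M\sigma)$. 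This argument requires no structural assumption on $G$, which is the whole point: the lemma is stated for an arbitrary bipartite graph.

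\textbf{Condition~\ref{item:task:parameter:k}.} Your Chernoff bound $\xPr\big(\rv{\xi}((s,s+\delta s))<k\big)\leq(2/\ee)^k$ is correct, but it is only useful when $k\to\infty$. The lemma's hypothesis is merely $k\leq\tfrac{1}{2}[\inf_u\gamma(u)]\delta s$, an \emph{upper} bound; nothing forces $k$ to be large. You slide into assuming $k\asymp\gamma(0)\delta s$ and then that this is a positive power of $\lambda$, but neither is implied by the five constraints in~\eqref{eq:task:parameter:simplified}. Concretely, if $\gamma(0)\delta s\to\infty$ while $k$ stays bounded and $m$ is taken near the bottom of its allowed range $M/\delta s$, your estimate gives $\xExp[N_{M\tau}]\preceq M\gamma(0)$, which need not be $\smallo(m)$. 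The paper avoids this by using Chebyshev instead: for any $k\leq\tfrac{1}{2}\xExp[W_r]$ one has $\xPr(W_r<k)\leq 4/(\gamma(r)\delta s)[1+\smallo(1)]$, and integrating via Campbell--Mecke yields $\xExp[N_{M\tau}]\preceq M/\delta s$ uniformly in $k$. Markov's inequality then gives $\delta_{M\tau}(m)\preceq M/(m\,\delta s)$, which is $\smallo(1)$ \emph{precisely} by the hypothesis $m\,\delta s\succ M$. So the paper's weaker tail bound is exactly calibrated to the lemma's constraints, whereas your sharper bound overshoots and then needs extra, unavailable, hypotheses to close the argument.
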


\begin{proof}
We consider the two scaling regimes separately.

\smallskip

\noindent\textsl{\uline{Regime $1\preceq M\prec\lambda$}.}\quad
We begin with some observations. In this case
\begin{align}
\lambda_U(M\sigma) &= c_U^{\beta_U}\lambda^{\beta_U}[1+\smallo(1)],
&\lambda_V(M\sigma) &= c_V^{\beta_V}\lambda^{\beta_V}[1+\smallo(1)],
\end{align}
for every $\sigma\geq 0$. This means that the orders of magnitude of $\lambda_U(M\sigma)$ and $\lambda_V(M\sigma)$ (up to their pre-factors) do not change with the scaled time $\sigma$. Clearly, for fixed $\tau\geq 0$, the $\smallo(1)$ terms in the above asymptotics are uniform in $\sigma\in[0,\tau]$. It follows that
\begin{align}
\label{eq:task:parameters:gamma:M-small}
\gamma(M\sigma) 
&= \big(1+\lambda_U(M\sigma)\big)\abs{U} + \big(1+\lambda_V(M\sigma)\big)\abs{V}
= \abs{V}c_V^{\beta_V}\lambda^{\beta_V}[1+\smallo(1)],
\end{align}
where the $\smallo(1)$ term is again uniform in $\sigma\in[0,\tau]$. Recall that 
\begin{align}
\check{\varepsilon}(s) 
&= \xPr^{(s)}_u(T_v<T^\circlearrowleft_u) = \frac{1}{\pi^{(s)}(u)\effR[(s)]{u}{v}},
\end{align}
where $\pi^{(s)}$ is the stationary probability of the Markov chain with parameters $\lambda_U(s)$ and $\lambda_V(s)$, and $\effR[(s)]{u}{v}$ is the effective resistance between $u$ and $v$ in the same Markov chain (see~\cite[Proposition 9.5]{LevPerWil08}). Note that both $\pi^{(s)}(u)$ and $\effR[(s)]{u}{v}$ are rational functions of $\lambda_U(s)$ and $\lambda_V(s)$. Namely, the stationary distribution $\pi^{(s)}$ is the solution of a system of linear equations whose coefficients are rational in $\lambda_U(s)$ and $\lambda_V(s)$. Likewise, the effective conductance $1/\effR[(s)]{u}{v}$ is the strength of the current flow from $u$ to $v$ when we put a unit battery between $u$ and $v$, and hence is a linear combination of the voltage values with coefficients that are rational in $\lambda_U(s)$ and $\lambda_V(s)$.  The voltage associated with the unit battery between $u$ and $v$ (i.e., a harmonic function with boundary conditions $0$ and $1$ at $u$ and $v$, respectively) is itself the solution of a linear system of equations whose coefficients are rational in $\lambda_U(s)$ and $\lambda_V(s)$. It follows that
\begin{align}
\label{eq:task:parameters:eps:M-small}
\check{\varepsilon}(M\sigma) &= \check{\varepsilon}(0)[1+\smallo(1)],
\end{align}
where the $\smallo(1)$ term is uniform in $\sigma\in[0,\tau]$.

Next we discuss the choice of parameters $C$, $k$, $m$ and $\delta s$ in order for Conditions~\ref{item:task:parameter:C}--\ref{item:task:parameter:E-vs-m} to be fulfilled. In order to satisfy~\ref{item:task:parameter:C} and~\ref{item:task:parameter:E-vs-m}, we choose $C=\frac{\smallo(1)}{\check{\varepsilon}(M)}$ and $m=\frac{\smallo(1)}{\check{\varepsilon}(M)}$ as $\lambda\to\infty$. In light of the above discussion, Condition~\ref{item:task:parameter:delta-s:gamma} is automatically satisfied because of~\eqref{eq:task:parameters:gamma:M-small}. By~\eqref{eq:task:parameters:gamma:M-small} and~\eqref{eq:task:parameters:eps:M-small}, Condition~\ref{item:task:parameter:delta-s:small} is satisfied as long as $\delta s = \smallo(M)$. To simplify Condition~\ref{item:task:parameter:k}, we use the following lemma.

\begin{lemma}
\label{lem:lower-bound:poisson}
Let $N_t$ denote the number of points $s\in\bar{\rv{\xi}}\cap[0,t]$ such that $\bar{\rv{\xi}}\big((s,s+\delta s)\big)<k$, and $\delta_t(m)\isdef\xPr(N_t>m)$. Then
\begin{align}
\delta_{M\tau}(m)
&\preceq \frac{M}{m\,\delta s},
\qquad\lambda\to\infty,
\end{align}
provided $k\leq\frac{1}{2}[{\displaystyle\inf_u\gamma(u)}]\delta s$.
\end{lemma}

\begin{proof}[Proof of Lemma~\ref{lem:lower-bound:poisson}]
We start by writing
\begin{align}
\xExp[N_t] 
&= \xExp\Bigg[\sum_{s\in\bar{\rv{\xi}}\cap[0,t]}\indicator{\bar{\rv{\xi}}((s,s+\delta s))<k}\Bigg] \\
&= \nonumber
\xPr\Big(\rv{\xi}\big((0,\delta s)\big)<k\Big) 
+ \xExp\Bigg[\sum_{s\in\rv{\xi}\cap[0,t]}\indicator{\bar{\rv{\xi}}((s,s+\delta s))<k}\Bigg] \\
&= \nonumber
\xPr\Big(\rv{\xi}\big((0,\delta s)\big)<k\Big) 
+ \int_0^t\xPr\Big(\rv{\xi}\big((r,r+\delta s)\big)<k\Big)\gamma(r)\,\dd r,
\end{align}
where the last equality follows from the Campbell-Mecke formula (see~\cite[Theorems~1.11 and~1.13]{BacBla09I}).
Note that $W_r\isdef\rv{\xi}\big((r,r+\delta s)\big)$ is a Poisson random variable with parameter $\int_r^{r+\delta s}\gamma(u)\dd u$. Therefore, choosing
\begin{align}
k &\leq \frac{1}{2}\left[\inf_u\gamma(u)\right]\delta s
\leq \frac{1}{2}\int_r^{r+\delta s}\gamma(u)\dd u
= \frac{1}{2}\xExp[W_r],
\end{align}
we can use the Chebyshev inequality to get
\begin{align}
\label{eq:lower-bound:poisson:chebyshev}
\xPr(W_r<k) 
&\leq \frac{\xVar[W_r]}{(\xExp[W_r]-k)^2}
= \frac{\int_r^{r+\delta s}\gamma(u)\dd u}{\big(\int_r^{r+\delta s}\gamma(u)\dd u - k\big)^2} \\
&\leq \nonumber
\frac{\gamma(r+\delta s)\delta s}{\big(\gamma(r)\delta s - \frac{1}{2}[\inf_u\gamma(u)]\delta s\big)^2}[1+\smallo(1)] \\
&\leq \nonumber
\frac{\gamma(r)\delta s}{\frac{1}{4}\big(\gamma(r)\delta s\big)^2}[1+\smallo(1)]
\leq \frac{4}{\gamma(r)\delta s}[1+\smallo(1)].
\end{align}
Therefore, by the Markov inequality,
\begin{align}
\delta_t(m) 
&\leq \frac{\xExp[N_t]}{m}
\leq
\frac{1}{m}\bigg[\frac{\bigo(1)}{[\inf_u\gamma(u)]\delta s} + \int_0^t\frac{\bigo(1)}{\gamma(r)\delta s}\gamma(r)\dd r\bigg] \\
&\leq \nonumber
\frac{\bigo(1)}{m\,\delta s}\bigg[\frac{1}{\inf_u\gamma(u)} + t\bigg],
\end{align}
which proves the claim because $\inf_u\gamma(u)\to\infty$ as $\lambda\to\infty$ and $t=M\tau$ with $\tau\geq 0$ a constant.	
\end{proof}

We continue with the proof of Lemma~\ref{lem:tasks:parameters:simplified}. It follows from Lemma~\ref{lem:lower-bound:poisson} that, in order to satisfy Condition~\ref{item:task:parameter:k}, we can choose $k\leq\frac{1}{2}\gamma(0)\delta s$ and make sure that $m\,\delta s\succ M$ as $\lambda\to\infty$.

\smallskip

\noindent\textsl{\uline{Regime $M\asymp\lambda$}.}\quad
Let $M\tau<\frac{c_U}{\mu_U}\lambda$. It is still the case that the orders of magnitude of $\lambda_U(M\sigma)$, $\lambda_V(M\sigma)$, $\gamma(M\sigma)$ and $\check{\varepsilon}(M\sigma)$ do not change for $\sigma\in[0,\tau]$. Hence $C=\frac{\smallo(1)}{\check{\varepsilon}(M)}$ and $m=\frac{\smallo(1)}{\check{\varepsilon}(M)}$ still guarantee Conditions~\ref{item:task:parameter:C} and~\ref{item:task:parameter:E-vs-m}. In order to satisfy Condition~\ref{item:task:parameter:delta-s:gamma}, it is enough that $\delta s=\smallo(M)$. Indeed, if $\delta s=\smallo(M)$, then
\begin{align}
\gamma(M\sigma + \delta s) 
&= \abs{V}\lambda_V(M\sigma + \delta s)[1+\smallo(1)] \\
&= \nonumber
\abs{V}(c_V\lambda+\mu_V M\sigma + \mu_V\delta s)^{\beta_V}[1+\smallo(1)] \\
&= \nonumber
\abs{V}(c_V\lambda+\mu_V M\sigma)^{\beta_V}[1+\smallo(1)] \\
&= \nonumber
\gamma(M\sigma)[1+\smallo(1)] \;.
\end{align}
Furthermore, $\delta s=\smallo(M)$ still ensures~\ref{item:task:parameter:delta-s:small}. Finally, Lemma~\ref{lem:lower-bound:poisson} remains valid when in the last two lines of~\eqref{eq:lower-bound:poisson:chebyshev} we change $[1+\smallo(1)]$ to $\bigo(1)$. Hence $k\leq\frac{1}{2}\gamma(0)\delta s$ and $m\,\delta s\succ M$ still ensure that Condition~\ref{item:task:parameter:k} is satisfied.
\end{proof}

In the proof of Lemma~\ref{lem:tasks:parameters:simplified} we made that important observation that, with the choices of parameters in~\eqref{eq:rates:choice:recall}, the orders of magnitude of $\lambda_U(s)$, $\lambda_V(s)$, $\gamma(s)$ and $\check{\varepsilon}(s)$ do not change as long as $s<\frac{c_U}{\mu_U}\lambda$.  For future reference, we state this as a separate proposition.

\begin{proposition}[\textbf{Orders of magnitude}]
\label{prop:critical-regime:orders-of-magnitude}
Consider the time scaling $s=M\sigma$ with $M=M(\lambda)>0$ and $\sigma\in[0,\tau]$.
If either $1\preceq M\prec\lambda$, or $M\asymp\lambda$ and $M\sigma<\frac{c_U}{\mu_U}\lambda$, then
\begin{align}
\lambda_U(M\sigma)&\asymp\lambda_U(0), & \lambda_V(M\sigma)&\asymp\lambda_V(0), &
\check{\varepsilon}(M\sigma)&\asymp\check{\varepsilon}(0), & \gamma(M\sigma)&\asymp\gamma(0),
\end{align}
as $\lambda\to\infty$.
\end{proposition}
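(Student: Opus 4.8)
As the remark preceding it makes clear, the estimates needed here were already performed in the course of proving Lemma~\ref{lem:tasks:parameters:simplified}; the plan is simply to isolate and reassemble them.

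The assertions about $\lambda_U$, $\lambda_V$ and $\gamma$ follow by direct inspection of~\eqref{eq:rates:choice:recall}. In the regime $1\preceq M\prec\lambda$ we have $M\sigma\preceq M\tau\prec\lambda$ uniformly in $\sigma\in[0,\tau]$ (with $\tau$ a fixed constant), so $c_U\lambda-\mu_U M\sigma=c_U\lambda[1+\smallo(1)]$ and $c_V\lambda+\mu_V M\sigma=c_V\lambda[1+\smallo(1)]$, hence $\lambda_U(M\sigma)=\lambda_U(0)[1+\smallo(1)]\asymp\lambda^{\beta_U}$ and $\lambda_V(M\sigma)=\lambda_V(0)[1+\smallo(1)]\asymp\lambda^{\beta_V}$. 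In the regime $M\asymp\lambda$, provided $M\sigma$ stays below $\frac{c_U}{\mu_U}\lambda$ with the deficit $c_U\lambda-\mu_U M\sigma$ of order $\lambda$ (which is the situation in which the proposition is applied — see below), one gets only $c_U\lambda-\mu_U M\sigma\asymp\lambda$ and $c_V\lambda\le c_V\lambda+\mu_V M\sigma\preceq\lambda$, and hence again $\lambda_U(M\sigma)\asymp\lambda^{\beta_U}\asymp\lambda_U(0)$ and $\lambda_V(M\sigma)\asymp\lambda^{\beta_V}\asymp\lambda_V(0)$. Since $\beta_V>\beta_U>0$, the term $\big(1+\lambda_V(s)\big)\abs{V}$ dominates in $\gamma(s)=\big(1+\lambda_U(s)\big)\abs{U}+\big(1+\lambda_V(s)\big)\abs{V}$, so in both regimes $\gamma(M\sigma)\asymp\lambda_V(M\sigma)\asymp\lambda^{\beta_V}\asymp\gamma(0)$.

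For $\check{\varepsilon}$ the plan is to use the electrical-network identity $\check{\varepsilon}(s)=1/\big(\pi^{(s)}(u)\,\effR[(s)]{u}{v}\big)$ recalled in the proof of Lemma~\ref{lem:tasks:parameters:simplified}, and to observe that its right-hand side is a quotient of two polynomials in $\lambda_U(s)$ and $\lambda_V(s)$ with \emph{non-negative} coefficients whose combinatorial shape (degrees, number of monomials) depends only on the fixed graph $G$. Indeed, $\pi^{(s)}$ is the product-form hard-core distribution, so $\pi^{(s)}(u)=\lambda_U(s)^{\abs{U}}/Z(s)$ with $Z(s)=\sum_{z\in\pspace{X}}\lambda_U(s)^{\abs{z\cap U}}\lambda_V(s)^{\abs{z\cap V}}$; and $\effR[(s)]{u}{v}$, given by the classical spanning-tree formulas for effective resistance, is a ratio of sums of products of edge conductances, each conductance being a monomial in $\lambda_U(s),\lambda_V(s)$ up to the common factor $1/(Z(s)\gamma(s))$. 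Clearing denominators leaves the claimed quotient of non-negative-coefficient polynomials. The only elementary fact needed is that such a quotient is stable under $\asymp$: if $P$ has non-negative coefficients and fixed exponents, then $x\asymp x_0$, $y\asymp y_0$ imply $P(x,y)\asymp P(x_0,y_0)$, because each monomial is individually comparable and a finite fixed sum of comparable non-negative quantities is comparable. Applying this together with the comparisons $\lambda_U(M\sigma)\asymp\lambda_U(0)$ and $\lambda_V(M\sigma)\asymp\lambda_V(0)$ already established yields $\check{\varepsilon}(M\sigma)\asymp\check{\varepsilon}(0)$.

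The one point requiring care — and the main (mild) obstacle — is the caveat in the regime $M\asymp\lambda$: one must keep $M\sigma$ away from the critical time $\frac{c_U}{\mu_U}\lambda$ at which $\lambda_U$ vanishes, since otherwise $\lambda_U(M\sigma)$ drops below order $\lambda^{\beta_U}$ and all four comparisons break down. This is automatic in the range of $\tau$ for which Theorem~\ref{thm:main}\ref{item:main:scenario-2} is formulated and poses no difficulty for the subsequent applications of the proposition. Apart from that, the whole content is the positivity-of-coefficients observation: it is precisely what allows ``$\asymp$'' to propagate from $\lambda_U(s),\lambda_V(s),\gamma(s)$ through to $\check{\varepsilon}(s)$, whereas an arbitrary rational function of $\lambda_U(s),\lambda_V(s)$ — which could suffer cancellation among its leading terms — would not enjoy this stability.
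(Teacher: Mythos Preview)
Your argument is correct and follows essentially the same route as the paper: the proposition is stated without a separate proof, being extracted from the computations inside the proof of Lemma~\ref{lem:tasks:parameters:simplified}, and you reproduce those computations.

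One point worth flagging: your explicit appeal to \emph{non-negativity of coefficients} in the spanning-tree representation of $\check{\varepsilon}(s)$ is a genuine sharpening of the paper's reasoning. The paper argues only that $\pi^{(s)}(u)$ and $\effR[(s)]{u}{v}$ are rational functions of $\lambda_U(s),\lambda_V(s)$ and then asserts $\check{\varepsilon}(M\sigma)=\check{\varepsilon}(0)[1+\smallo(1)]$; as you correctly point out, an arbitrary rational function could suffer cancellation among leading terms, so ``rational'' alone is not quite enough. Your positivity observation closes this gap and, moreover, handles the regime $M\asymp\lambda$ uniformly (where one only has $\lambda_U(M\sigma)\asymp\lambda_U(0)$ rather than $[1+\smallo(1)]$), whereas the paper simply asserts the conclusion in that regime without further justification. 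So your version is slightly more careful, but the underlying strategy is the same.
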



\subsection{Short-time regularity in the time-homogeneous setting}
\label{sec:reghom}

We have extensively exploited the monotonicity of the hard-core model in the parameters $\lambda_U$ and $\lambda_V$. Unfortunately, this monotonicity does not provide us with any meaningful information about the conditional probabilities involved in the short-time regularity Conditions~\ref{item:task:truncated-mean}--\ref{item:task:last-excursion}. This lack of monotonicity makes the evaluation of these conditions challenging. It is helpful to first examine conditions similar to~\ref{item:task:truncated-mean}--\ref{item:task:last-excursion} in a time-homogeneous setting. In order to apply the results of~\cite{HolNarTaa16}, we will need to impose \emph{mild conditions on the isoperimetric properties of the underlying bipartite graph}.

\paragraph*{Simpler conditions}

In the time-homogeneous setting, Conditions~\ref{item:task:truncated-mean}--\ref{item:task:last-excursion} reduce to the following simpler conditions:
\begin{enumerate}[label={\protect\itemboxed{\rm\Roman*$'$}}] 
\begin{samepage}	
\item 
\label{item:task:truncated-mean:time-homogeneous}
$\xExp^{(s)}_u\big[\hat{T}^\circlearrowleft_u\indicator{\hat{T}^\circlearrowleft_u\leq C+1}\,\big|\,\hat{T}^\circlearrowleft_u<\hat{T}_v\big]
= 1+\smallo(1)$ as $\lambda\to\infty$.
\end{samepage}
\item 
\label{item:task:conditional-return:time-homogeneous}
$C\xPr^{(s)}_u\big(\hat{T}^\circlearrowleft_u>C+1\,\big|\,\hat{T}^\circlearrowleft_u<\delta\hat{T}_v\big)
= \smallo(1)$ as $\lambda\to\infty$.
\item
\label{item:task:conditional-u:time-homogeneous}
$\sup_{x\notin\{u,v\}}\xPr^{(s)}_x\big(\hat{T}_u>C\,\big|\,\hat{T}_u<\delta\hat{T}_v\big)
= \smallo(1)$ as $\lambda\to\infty$.
\item 
\label{item:task:conditional-v:time-homogeneous}
$\xPr^{(s)}_u\big(\hat{T}_v>k\,\big|\,\hat{T}_v<\hat{T}^\circlearrowleft_u\big) 
= \smallo(1)$ as $\lambda\to\infty$.
\item
\label{item:task:last-excursion:time-homogeneous}
$\xPr^{(s)}_u(S\leq M\tau<T_v)=\smallo(1)$ as $\lambda\to\infty$.
\end{enumerate}
We require these conditions to be satisfied uniformly in $s\in[0,M\tau]$. Before we proceed, let us mark a slight difference in notation compared to~\cite{HolNarTaa16}. In the present paper, $T_u^\circlearrowleft$ and $T_v$ are the continuous-time return and hitting times, while $\hat{T}_u^\circlearrowleft$ and $\hat{T}_v$ are the discrete-time versions of the return and hitting times (i.e., obtained by counting the number of ticks of the Poisson clock). The notation used in~\cite{HolNarTaa16} was the opposite, because all the analysis in that paper was based on the discrete time.

\paragraph*{Notation}
To establish~\ref{item:task:truncated-mean:time-homogeneous}--\ref{item:task:last-excursion:time-homogeneous}, we can follow different approaches. Here we use the tools developed in~\cite[Section~B.4]{HolNarTaa16} based on ideas from~\cite{ManNarOliSco04}. Let us briefly recall some relevant concepts and notation from~\cite{HolNarTaa16}. For brevity, we drop the superscript $(s)$ from $\xPr^{(s)}$, $\xExp^{(s)}$, $K^{(s)}$, $\pi^{(s)}$, etc.\ whenever there is no chance of confusion. Similarly, we write $\gamma$, $\varepsilon$, $\Gamma$, etc.\ instead of $\gamma(s)$, $\check{\varepsilon}(s)$, $\Gamma^{(s)}$, etc.

\paragraph*{Energy barriers and stability levels}
Given two distinct configurations $a,b\in\pspace{X}$, we write $a\sim b$ if $K(a,b)>0$ (or equivalently, $K(b,a)>0$). We consider a simple undirected graph on the configuration space $\pspace{X}$ in which two points $a,b\in\pspace{X}$ are connected if and only if $a\sim b$. The conductance of an edge $(a,b)$ is denoted by $c(a,b)\isdef\pi(a)K(a,b)$, and its resistance by $r(a,b)=1/c(a,b)$.  The \emph{critical resistance} between two subsets $A,B\subseteq\pspace{X}$ is defined as
\begin{align}
\Psi(A,B) &\isdef \inf_{\omega:A \pathto B} \sup_{e\in\omega}\; r(e),
\end{align}
where the infimum runs over all paths $\omega:A\pathto B$ from $A$ to $B$.  The logarithm of $\Psi(A,B)$ is often referred to as the \emph{communication height} $A$ and $B$.  It can be thought of as the (absolute) height of the smallest hill that the process needs to climb in order to go from $A$ to $B$ or vice versa.
Recall that as $\lambda\to\infty$, $\Psi(A,B)$ has the same order of magnitude as the effective resistance $\effR{A}{B}$ between $A$ and $B$.

For a state $x\in\pspace{X}$, we write
\begin{align}
J^-(x) &\isdef \{y\in\pspace{X}\colon\, \text{$\pi(y)\succ\pi(x)$ as $\lambda\to\infty$}\}
\end{align}
for the set of states $y$ that have asymptotically larger stationary probability than $x$.
The boundary of a set $A\subseteq\pspace{X}$ is defined as
\begin{align}
\partial A\isdef\{b\in\pspace{X}\setminus A\colon\, \text{$a\sim b$ for some $a\in A$}\}.
\end{align}

For $A\subseteq\pspace{X}$, define
\begin{align}
\Gamma(A) &\isdef \sup_{a\in A}\pi(a)\Psi\big(a,J^{-1}(a)\big).
\end{align}
The logarithm of $\Gamma(\{x\})$ (for $x\in\pspace{X}$) is often referred to as the \emph{stability level} of $a$, and can be thought of as the ``energy barrier'' when going from $a$ to states with higher stationary probability, or the (relative) height of the shortest hill that the process starting from $a$ needs to climb in order to reach a state with higher stationary probability.
Recall that $\xExp_x[\hat{T}_{J^-(x)}]\asymp\pi(x)\Psi\big(x,J^-(x)\big)$ for every $x$ (see \cite[Proposition B.2]{HolNarTaa16}) and, in particular, that $\sup_{x\in A}\xExp_x[\hat{T}_{A^\complement}]\preceq\Gamma(A)$ as $\lambda\to\infty$.

Let 
\begin{align}
\Gamma &\isdef \pi(u)\Psi(u,v), \\
\check{\Gamma} &\isdef
\Gamma(\pspace{X}\setminus\{u,v\}) =
\sup_{x\in\pspace{X}\setminus\{u,v\}} \pi(x)\Psi\big(x,\{u,v\}\big).
\end{align}
We know that $\varepsilon\isdef\xPr_u(\hat{T}_v<\hat{T}^\circlearrowleft_u)\asymp 1/\Gamma$. From the above remarks, we also get that $\xExp_u[\check{T}_v]\asymp\Gamma$ provided $\check{\Gamma}\preceq\Gamma$. Furthermore, if $\check{\Gamma}\prec\Gamma$ (which is the \emph{no-deep-well} property), then the transition time from $u$ to $v$ is asymptotically exponentially distributed (\cite[Corollary B.7]{HolNarTaa16}).

\begin{lemma}[\textbf{No deep well property}]
\label{lem:no-deep-well}
Subject to Assumption~\textup{\ref{keyassump}}, $\Gamma\succ\check{\Gamma}\log\gamma\succeq\log\gamma$.
\end{lemma}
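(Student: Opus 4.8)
The plan is to prove the two inequalities $\Gamma\succ\check\Gamma\log\gamma$ and $\check\Gamma\log\gamma\succeq\log\gamma$ separately, treating the two alternatives of Assumption~\ref{keyassump} by different means for the first one. Observe at the outset that $\gamma$, $\Gamma$, and (as will come out of the proof) $\check\Gamma$ are all, up to bounded factors, powers of $\lambda$, while $\log\gamma\asymp\log\lambda$ is only logarithmic; hence a \emph{polynomial} gap $\check\Gamma\preceq\lambda^{-c}\Gamma$ with some $c>0$ already yields $\check\Gamma\log\gamma\prec\Gamma$, so the first inequality reduces to the no-deep-well property $\check\Gamma\prec\Gamma$ with a power-of-$\lambda$ margin. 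The second inequality, $\check\Gamma\log\gamma\succeq\log\gamma$, is nothing but $\check\Gamma\succeq 1$, after which the chain is non-degenerate because $\log\gamma\to\infty$ (it also yields $\Gamma\succ\log\gamma$ as a by-product).

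For $\check\Gamma\succeq 1$ I would evaluate the supremum defining $\check\Gamma$ at $x=\varnothing$, which is distinct from $u$ and $v$ since $G$ has no isolated vertex (so $\abs{U},\abs{V}\geq 1$). Using $\pi(y)\propto\prod_{i:\,y_i=\symb 1}\lambda_i$ and the fact that, under both alternatives of Assumption~\ref{keyassump}, $v$ is the stable state (this is where $\beta_U\abs{U}<\beta_V\abs{V}$ enters, and it is immediate in the complete-bipartite case), the partition function is $\asymp\lambda_V^{\abs{V}}$, so $\pi(\varnothing)\asymp\lambda_V^{-\abs{V}}$. Every path out of $\varnothing$ traverses an edge $\varnothing\sim\{i\}$, of resistance $r(\varnothing,\{i\})=\gamma/\big(\pi(\varnothing)\lambda_i\big)\geq\gamma/\big(\pi(\varnothing)\lambda_V\big)\asymp\lambda_V^{\abs{V}}$, using $\gamma\asymp\lambda_V$. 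Hence $\Psi(\varnothing,\{u,v\})\succeq\lambda_V^{\abs{V}}$ and $\pi(\varnothing)\Psi(\varnothing,\{u,v\})\succeq 1$, so $\check\Gamma\succeq 1$.

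For $\check\Gamma\prec\Gamma$ under Assumption~\ref{keyassump}\ref{keyassump:complete-bipartite} I would carry out the explicit electrical-network computation promised in the text, using $\Gamma=\pi(u)\Psi(u,v)\asymp\pi(u)\effR{u}{v}$ and the analogous expression for $\pi(x)\Psi(x,\{u,v\})$. The hard-core configurations of $K_{\abs{U},\abs{V}}$ are exactly the subsets of $U$ and the subsets of $V$, and a minimal path between any two of them is, up to which monotone option is cheaper, of a single type: empty the active side one vertex at a time, then fill the other side. Computing $\pi$ and the resulting critical resistances gives $\pi(u)\asymp\lambda_U^{\abs{U}}/\lambda_V^{\abs{V}}$, $\Psi(u,v)\asymp\lambda_V^{\abs{V}+1}/\lambda_U$, hence $\Gamma\asymp\gamma\,\lambda_U^{\abs{U}-1}$; and for $x\notin\{u,v\}$ one finds $\pi(x)\Psi(x,\{u,v\})\asymp\lambda_V/\lambda_U$ when $x$ is a proper subset of $U$ (the ratio being independent of $\abs{x}$, the cheapest escape being to refill $U$) and $\asymp 1$ when $x$ is a proper subset of $V$, so $\check\Gamma\asymp\gamma/\lambda_U$. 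Then $\Gamma/\check\Gamma\asymp\lambda_U^{\abs{U}}$, a positive power of $\lambda$; this finishes this case and simultaneously records the asymptotics $\Gamma^{(s)}\asymp\gamma(s)\lambda_U^{\abs{U}-1}(s)$ and $\check\Gamma^{(s)}\asymp\gamma(s)/\lambda_U(s)$ quoted elsewhere.

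For $\check\Gamma\prec\Gamma$ under Assumption~\ref{keyassump}\ref{keyassump:general} the numerator is supplied by \cite{HolNarTaa16}: combining $\check\varepsilon\asymp 1/\Gamma$ with \eqref{eq:time-homogeneous:success-prob-vs-expectation}--\eqref{eq:mean-crossover:asymptotics} gives $\Gamma\asymp\gamma\,\lambda_U^{\Delta(k^*)+k^*-1}/\lambda_V^{k^*-1}$, i.e.\ $\Gamma/\gamma\asymp\lambda_U^{\Delta(k^*)-\alpha(k^*-1)}$, a positive power of $\lambda$ (by the definition of $k^*$ as a maximiser and $\Delta(1)\geq 1$). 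The new ingredient, and the step I expect to be the real obstacle, is a uniform upper bound on $\check\Gamma=\sup_{x\notin\{u,v\}}\pi(x)\Psi(x,\{u,v\})$, a quantity not analysed before. The idea is to show that every $x\notin\{u,v\}$ admits a path to $\{u,v\}$ that never has to be pinned at the full critical droplet: a sub-critical amount of $V$-activity can be relaxed back towards $u$, a super-critical amount can be grown towards $v$, and for a general (in particular a maximal, ``mixed'') configuration one invokes hypothesis~(H2) — the existence of long isoperimetric numberings starting at any $i\in V$ — to complete $x$ to such a configuration over a controlled barrier. The crux is to make the sub-/super-critical dichotomy precise for an arbitrary $x$ and to verify that the resulting $\pi(x)\Psi(x,\{u,v\})$ is, uniformly in $x$, at most $\gamma$ times a power of $\lambda_U$ with exponent \emph{strictly} below $\Delta(k^*)-\alpha(k^*-1)$ — the gain coming precisely from not having to pay for the top of the critical droplet while sitting there. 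Granting this, $\check\Gamma\preceq\lambda^{-c}\Gamma$ for some $c>0$, hence $\check\Gamma\log\gamma\prec\Gamma$, which together with $\check\Gamma\succeq 1$ completes the proof.
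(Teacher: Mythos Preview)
Your overall strategy—reduce $\check\Gamma\log\gamma\prec\Gamma$ to the no-deep-well inequality $\check\Gamma\prec\Gamma$ by exploiting that $\Gamma,\check\Gamma$ are rational in $\lambda$ while $\log\gamma\asymp\log\lambda$, and then establish $\check\Gamma\succeq 1$ separately—matches the paper exactly. Your argument for $\check\Gamma\succeq 1$ via the single configuration $x=\varnothing$ is a valid shortcut (the paper instead bounds $\pi(x)\Psi(x,J^-(x))\succeq 1$ for \emph{every} $x\notin\{u,v\}$ by noting that any path to $J^-(x)$ must cross an addition edge $y\xadd z$ with $\pi(y)\preceq\pi(x)$, giving $\pi(x)r(y,z)\succeq\gamma/\lambda_V\asymp 1$). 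For the complete-bipartite case your computation agrees with the paper's.

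The genuine gap is in case~\ref{keyassump:general}. You write that $\check\Gamma$ is ``a quantity not analysed before'' and then sketch a fresh sub-/super-critical escape argument that you yourself flag as incomplete (``Granting this\ldots''). In fact the inequality $\check\Gamma\prec\Gamma$ under hypotheses~(H0) and~(H2) is precisely Corollary~3.5 of~\cite{HolNarTaa16}, and the paper's proof simply invokes it; the remark in the introduction that $\check\Gamma$ ``has not been studied before'' refers to identifying its \emph{order of magnitude} (e.g.\ for the torus, needed for Assumption~\ref{keyassumpextra}), not to the comparison with $\Gamma$. So no new isoperimetric argument is needed here: once you check that (H0) reads $\abs{U}<(1+\alpha)\abs{V}$ with $\alpha=\beta_V/\beta_U-1$, which is exactly $\beta_U\abs{U}<\beta_V\abs{V}$, the cited corollary closes the case.

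A minor point: your claim that $Z\asymp\lambda_V^{\abs{V}}$ is ``immediate in the complete-bipartite case'' is not quite right, since Assumption~\ref{keyassump}\ref{keyassump:complete-bipartite} does not include $\beta_U\abs{U}<\beta_V\abs{V}$. This does not affect your conclusion, because in the product $\pi(\varnothing)\Psi(\varnothing,\{u,v\})\geq\gamma/\lambda_V$ the partition function cancels anyway.
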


\begin{proof}
Let $x\in\pspace{X}\setminus\{u,v\}$.  Note that every path $\omega:x\pathto J^-(x)$ contains a transition $y\xadd z$ where a particle is added to either $U$ or $V$, so that $\pi(y)\preceq\pi(x)\prec\pi(z)$.  Then
\begin{align}
\Psi(\omega) &\succeq r(y,z) = \frac{1}{\pi(y)K(y,z)} =
\begin{cases}
\frac{\gamma}{\pi(y)\lambda_U}	& \text{if $y\xadd[U] z$,} \\
\frac{\gamma}{\pi(y)\lambda_V}	& \text{if $y\xadd[V] z$,}
\end{cases}
\end{align}
which is $\succeq 1/\pi(y)$.
Therefore, $\pi(x)\Psi\big(x,J^-(x)\big)\succeq 1$.  It follows that $\check{\Gamma}\succeq 1$.
In the special case in which $G$ is a complete bipartite graph, every configuration $x\in\pspace{X}\setminus\{u,v\}$ has itself a missing particle, and therefore there exists a configuration $z$ such that $x\xadd z$.  It follows that, in this case, $\check{\Gamma}\asymp\gamma/\lambda_U$.

Let us next argue that $\Gamma\succ\check{\Gamma}$ under Assumption~\ref{keyassump}.
According to~\cite[Corollary~3.5]{HolNarTaa16}, $\check{\Gamma}\prec\Gamma$ as long as hypothesis~(H0) and~(H2) in~\cite{HolNarTaa16} are satisfied. Hypothesis~(H0) says that $\abs{U}<(1+\alpha)\abs{V}$, where
\begin{align}
\alpha &\isdef \lim_{\lambda\to\infty}\frac{\log\lambda_V}{\log\lambda_U}-1
= \lim_{\lambda\to\infty}\frac{\log(c_V\lambda+\mu_V s)^{\beta_V}}{\log(c_U\lambda-\mu_Us)^{\beta_U}} - 1
= \frac{\beta_V}{\beta_U} - 1.
\end{align}
On the other hand, it can be verified that if $G$ is a complete bipartite graph, then $\Gamma\asymp\gamma\lambda_U^{\abs{U}-1}$ (see~\cite[Example~2.1]{HolNarTaa16}), which is
of higher order of magnitude than~$\check{\Gamma}\asymp\gamma/\lambda_U$.

Lastly, note that $\Gamma$ and $\check{\Gamma}$ are increasing rational functions of $\lambda$, while $\log\gamma\asymp\log\lambda$. Since $\check{\Gamma}\prec\Gamma$, we in fact have $\check{\Gamma}\log\gamma\prec\Gamma$. 
\end{proof}

\paragraph*{Verificiation of Conditions~\ref{item:task:truncated-mean:time-homogeneous}--\ref{item:task:last-excursion:time-homogeneous}}

The validity of Conditions~\ref{item:task:truncated-mean:time-homogeneous} and~\ref{item:task:conditional-return:time-homogeneous} follows from part~\ref{item:tasks:time-homogeneous:paper-1:conditional-return} of the following proposition (with the help of the Markov inequality).

\begin{proposition}
\label{prop:task:truncated-mean-and-conditional-return:time-homogeneous:proof}
Subject to Assumption~{\rm \ref{keyassump}}, 
\begin{enumerate}[label={\rm(\roman*)}]
\item 
\label{item:tasks:time-homogeneous:paper-1:conditional-return}
$\xExp^{(s)}_u\big[\hat{T}^\circlearrowleft_u\,\big|\,\hat{T}^\circlearrowleft_u<\hat{T}_v\big] = 1 + \smallo(1)$
as $\lambda\to\infty$.
\item 
\label{item:tasks:time-homogeneous:paper-1:conditional-crossover}
$\xExp^{(s)}_u\big[\hat{T}_v\,\big|\,\hat{T}_v<\hat{T}^\circlearrowleft_u\big] 
= \xExp^{(s)}_u[\hat{T}_v]\,\smallo(1)$ as $\lambda\to\infty$.
\end{enumerate}
\end{proposition}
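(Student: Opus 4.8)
The plan is to prove part~\ref{item:tasks:time-homogeneous:paper-1:conditional-return} first and then deduce part~\ref{item:tasks:time-homogeneous:paper-1:conditional-crossover} from it. Throughout we drop the superscript $(s)$. By Proposition~\ref{prop:critical-regime:orders-of-magnitude}, all of $\lambda_U,\lambda_V,\gamma,\varepsilon,\Gamma,\check{\Gamma}$ keep their orders of magnitude uniformly in $s\in[0,M\tau]$, so it suffices to prove the two estimates for a generic choice of $(\lambda_U,\lambda_V)$ with $\lambda_U\asymp\lambda^{\beta_U}$ and $\lambda_V\asymp\lambda^{\beta_V}$, the $\smallo(1)$'s being uniform. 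I would import the following time-homogeneous facts from~\cite{HolNarTaa16}, all valid under Assumption~\ref{keyassump}: $\varepsilon\asymp 1/\Gamma$; $\xExp_x[\hat{T}_{J^-(x)}]\asymp\pi(x)\,\Psi\big(x,J^-(x)\big)$ for every $x\in\pspace{X}$ (\cite[Proposition~B.2]{HolNarTaa16}); and $\varepsilon\gamma\,\xExp_u[T_v]=1+\smallo(1)$, i.e.~\eqref{eq:time-homogeneous:success-prob-vs-expectation}.

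For part~\ref{item:tasks:time-homogeneous:paper-1:conditional-return} I would decompose on the first tick of the clock issued from $u$. The only admissible moves from $u$ are the removals $u\xremove[i]$ to $w_i\isdef u\setminus\{i\}$, $i\in U$, each with probability $1/\gamma$, so with probability $1-\abs{U}/\gamma=1-\smallo(1)$ the tick is a self-loop and $\hat{T}^\circlearrowleft_u=1<\hat{T}_v$. This yields $\xExp_u[\hat{T}^\circlearrowleft_u\indicator{\hat{T}^\circlearrowleft_u<\hat{T}_v}]\ge 1-\abs{U}/\gamma=1-\smallo(1)$ and, on the other hand,
\[
\xExp_u\big[\hat{T}^\circlearrowleft_u\,\indicator{\hat{T}^\circlearrowleft_u<\hat{T}_v}\big]
=\Big(1-\tfrac{\abs{U}}{\gamma}\Big)+\tfrac1\gamma\sum_{i\in U}\xExp_{w_i}\big[(1+\hat{T}_u)\,\indicator{\hat{T}_u<\hat{T}_v}\big]
\le 1+\tfrac1\gamma\sum_{i\in U}\xExp_{w_i}\big[\hat{T}_{\{u,v\}}\big].
\]
The key estimate is then $\xExp_{w_i}[\hat{T}_{\{u,v\}}]\preceq\gamma/\lambda_U$: indeed $\pi(w_i)=\pi(u)/\lambda_U\prec\pi(u)$ and, under Assumption~\ref{keyassump}, also $\pi(w_i)\prec\pi(v)$ (in case~\ref{keyassump:general} since $\beta_U\abs{U}<\beta_V\abs{V}$; in case~\ref{keyassump:complete-bipartite} via~\cite[Example~2.1]{HolNarTaa16} or the $T_\varnothing$ reduction of the complete-bipartite example), so $\{u,v\}\subseteq J^-(w_i)$ and $\hat{T}_{\{u,v\}}\le\hat{T}_{J^-(w_i)}$; the one-edge path $w_i\xadd u$, which is admissible because every neighbour of $i$ lies in $V$ and $V$ is empty in $w_i$, bounds the critical resistance by $r(w_i,u)=\gamma/(\pi(w_i)\lambda_U)$, and \cite[Proposition~B.2]{HolNarTaa16} turns this into $\xExp_{w_i}[\hat{T}_{J^-(w_i)}]\asymp\pi(w_i)\Psi(w_i,J^-(w_i))\preceq\gamma/\lambda_U$. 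Substituting gives $\xExp_u[\hat{T}^\circlearrowleft_u\indicator{\hat{T}^\circlearrowleft_u<\hat{T}_v}]=1+\bigo(\abs{U}/\lambda_U)=1+\smallo(1)$, and dividing by $\xPr_u(\hat{T}^\circlearrowleft_u<\hat{T}_v)=1-\varepsilon=1-\smallo(1)$ yields~\ref{item:tasks:time-homogeneous:paper-1:conditional-return}.

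For part~\ref{item:tasks:time-homogeneous:paper-1:conditional-crossover} I would use the regenerative structure at $u$. Viewing the successive excursions away from $u$ (each ending at the first return to $u$ or at $v$) as i.i.d.\ trials with success probability $\varepsilon$, the first-success decomposition gives
\[
\xExp_u[\hat{T}_v]=\frac{1-\varepsilon}{\varepsilon}\,\xExp_u\!\big[\hat{T}^\circlearrowleft_u\,\big|\,\hat{T}^\circlearrowleft_u<\hat{T}_v\big]+\xExp_u\!\big[\hat{T}_v\,\big|\,\hat{T}_v<\hat{T}^\circlearrowleft_u\big].
\]
Since the inter-tick times of the Poisson clock are i.i.d.\ $\mathrm{Exp}(\gamma)$ and independent of the tick-indexed stopping time $\hat{T}_v$, we have $\xExp_u[T_v]=\xExp_u[\hat{T}_v]/\gamma$, so~\eqref{eq:time-homogeneous:success-prob-vs-expectation} reads $\varepsilon\,\xExp_u[\hat{T}_v]=1+\smallo(1)$. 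Combining with part~\ref{item:tasks:time-homogeneous:paper-1:conditional-return} and dividing the displayed identity by $\xExp_u[\hat{T}_v]$,
\[
\frac{\xExp_u[\hat{T}_v\mid\hat{T}_v<\hat{T}^\circlearrowleft_u]}{\xExp_u[\hat{T}_v]}=1-\frac{(1-\varepsilon)\big(1+\smallo(1)\big)}{\varepsilon\,\xExp_u[\hat{T}_v]}=1-\frac{(1-\varepsilon)\big(1+\smallo(1)\big)}{1+\smallo(1)}=\varepsilon+\smallo(1)=\smallo(1),
\]
which is precisely part~\ref{item:tasks:time-homogeneous:paper-1:conditional-crossover}.

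I expect the main obstacle to be the estimate $\xExp_{w_i}[\hat{T}_{\{u,v\}}]\preceq\gamma/\lambda_U$ in part~\ref{item:tasks:time-homogeneous:paper-1:conditional-return}: bounding a \emph{critical resistance} by a single edge is immediate, but converting this into a bound on the \emph{expected} hitting time of $\{u,v\}$ genuinely needs the potential-theoretic apparatus of~\cite[Section~B.4]{HolNarTaa16}, and some care is required because $J^-(w_i)$ may contain configurations other than $u$ and $v$. Checking that all the $\smallo(1)$ terms above hold uniformly in $s\in[0,M\tau]$ (as conditions~\ref{item:task:truncated-mean:time-homogeneous}--\ref{item:task:conditional-return:time-homogeneous} demand) is the other point where the argument has to be carried out carefully rather than treated as routine.
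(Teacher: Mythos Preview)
Your argument for part~\ref{item:tasks:time-homogeneous:paper-1:conditional-crossover} is correct and is essentially the paper's argument. The gap is in part~\ref{item:tasks:time-homogeneous:paper-1:conditional-return}: the inequality $\hat{T}_{\{u,v\}}\le\hat{T}_{J^-(w_i)}$ goes the wrong way. If $\{u,v\}\subseteq J^-(w_i)$, then $J^-(w_i)$ is the larger target and is hit no later, so $\hat{T}_{J^-(w_i)}\le\hat{T}_{\{u,v\}}$. Consequently your bound $\xExp_{w_i}[\hat{T}_{J^-(w_i)}]\preceq\gamma/\lambda_U$ does not control $\xExp_{w_i}[\hat{T}_{\{u,v\}}]$. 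You yourself flag this step as the main obstacle, and indeed it is: the one-edge estimate only controls the escape time to $J^-(w_i)$, not the possibly longer time to reach $\{u,v\}$ specifically. One can repair this via the statement $\sup_{x\in A}\xExp_x[\hat{T}_{A^\complement}]\preceq\Gamma(A)$ with $A=\pspace{X}\setminus\{u,v\}$, which gives $\xExp_{w_i}[\hat{T}_{\{u,v\}}]\preceq\check{\Gamma}$, but then you need $\check{\Gamma}\prec\gamma$, which is not part of Assumption~\ref{keyassump} and is not established in the paper for general graphs.

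The paper avoids all of this by observing that the regenerative identity you already wrote down for~\ref{item:tasks:time-homogeneous:paper-1:conditional-crossover},
\[
\xExp_u[\hat{T}_v]=\Big(\tfrac{1}{\varepsilon}-1\Big)\mu+\eta,
\qquad \mu\isdef\xExp_u\big[\hat{T}^\circlearrowleft_u\,\big|\,\hat{T}^\circlearrowleft_u<\hat{T}_v\big],\quad
\eta\isdef\xExp_u\big[\hat{T}_v\,\big|\,\hat{T}_v<\hat{T}^\circlearrowleft_u\big],
\]
already yields~\ref{item:tasks:time-homogeneous:paper-1:conditional-return} as well: since $\eta\ge 0$ and $\varepsilon\,\xExp_u[\hat{T}_v]=1+\smallo(1)$, one gets $\mu\le\varepsilon\,\xExp_u[\hat{T}_v]/(1-\varepsilon)=1+\smallo(1)$, and together with the trivial $\mu\ge 1$ this gives $\mu=1+\smallo(1)$. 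Then $\eta=\smallo(\xExp_u[\hat{T}_v])$ follows immediately. So the first-step decomposition and the potential-theoretic estimate on $\xExp_{w_i}[\hat{T}_{\{u,v\}}]$ are unnecessary; both parts drop out of the single identity plus $\varepsilon M=1+\smallo(1)$.
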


\begin{proof}
The estimate on $\xExp_u\big[\hat{T}^\circlearrowleft_u\,\big|\,\hat{T}^\circlearrowleft_u<\hat{T}_v\big]$ in the time-homogeneous setting is implicit in the proof of~\cite[Theorem~1.2]{HolNarTaa16}. Let
\begin{itemize}
\item $\varepsilon\isdef\xPr_u(\hat{T}_v<\hat{T}^\circlearrowleft_u) = \xPr(B=1)$,
\item $\mu\isdef\xExp_u[\hat{T}^\circlearrowleft_u\,|\,\hat{T}^\circlearrowleft_u<\hat{T}_v]=\xExp[\delta T\,|\,B=0]$,
\item $\eta\isdef\xExp_u[\hat{T}_v\,|\,\hat{T}_v<\hat{T}^\circlearrowleft_u]=\xExp[\delta T\,|\,B=1]$,
\item $M\isdef\xExp_u[\hat{T}_v]$.
\end{itemize}
We know that
\begin{align}
\varepsilon &= \frac{1}{\pi(u)\effR{u}{v}} \asymp \frac{1}{\Gamma} = \smallo(1),
\qquad \text{as $\lambda\to\infty$} \;.
\end{align}
The first equality is~\cite[Eq.~(A.5)]{HolNarTaa16}, which is standard. The second equality follows from the fact that $\effR{u}{v}\asymp\Psi(u,v)$ (Proposition~A.2 of~\cite{HolNarTaa16}), and the last equality is by Lemma~\ref{lem:no-deep-well}.  Furthermore, we know that
\begin{align}
\label{Mprop}
M &= \pi(u)\effR{u}{v}[1+\smallo(1)] \;, \qquad\text{as $\lambda\to\infty$}.
\end{align}
Under Assumption~\ref{keyassump}\ref{keyassump:general}, \eqref{Mprop} is the same as~\cite[Eq.~(4.1)]{HolNarTaa16}. Note that in our setting, hypothesis~(H0) of~\cite{HolNarTaa16} follows from $\beta_U\abs{U}<\beta_V\abs{V}$. Under Assumption~\ref{keyassump}\ref{keyassump:complete-bipartite}, \eqref{Mprop} is the same as~\cite[Eq.~(2.5) in Example~2.1]{HolNarTaa16}.

Now, note that $M=(\nicefrac{1}{\varepsilon}-1)\mu+\eta$. Since $\mu\geq 1$, it follows that $\mu=1+\smallo(1)$ and $\eta=\smallo(M)$ as $\lambda\to\infty$.
\end{proof}

For Conditions~\ref{item:task:conditional-u:time-homogeneous} and~\ref{item:task:conditional-v:time-homogeneous}, we use an asymptotic result on conditional tail probabilities of exit times established in~\cite{HolNarTaa16}.

\begin{proposition}
\label{prop:task:conditional-u/v:time-homogeneous:proof}
Conditions~\ref{item:task:conditional-u:time-homogeneous} and~\ref{item:task:conditional-v:time-homogeneous} are satisfied provided $C\succ\check{\Gamma}\log\gamma$.
\end{proposition}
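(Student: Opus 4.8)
The plan is to reduce both of the claimed conditions~\ref{item:task:conditional-u:time-homogeneous} and~\ref{item:task:conditional-v:time-homogeneous} to a single input: an \emph{exponential} tail bound, on the scale $\check\Gamma$, for the number of ticks the frozen chain (with parameters $\lambda_U(s),\lambda_V(s)$) needs to leave the set $A\isdef\pspace{X}\setminus\{u,v\}$, which I would then divide by a crude lower bound on the conditioning probability. The reason a plain Markov inequality is not enough is that it produces only a polynomial ($\check\Gamma/C$) tail, whereas the events on which we condition can themselves be as small as $\gamma^{-\bigo(1)}$; the factor $\log\gamma$ in the hypothesis is exactly the logarithm of the reciprocal of this conditioning probability, and is what the exponential tail is there to absorb. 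Throughout I would use Proposition~\ref{prop:critical-regime:orders-of-magnitude} to keep all constants uniform in $s\in[0,M\tau]$.

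\emph{Step 1 (exponential tail off $\{u,v\}$).} Since $A^\complement=\{u,v\}$, the estimate recalled above gives $\sup_{x\in A}\xExp_x[\hat T_{A^\complement}]\preceq\Gamma(A)=\check\Gamma$ (from Proposition~B.2 of~\cite{HolNarTaa16}). Fixing a constant $C_1$ with $\sup_{x\in A}\xExp_x[\hat T_{A^\complement}]\le C_1\check\Gamma$ for all large $\lambda$, Markov's inequality gives $\sup_{x\in A}\xPr_x(\hat T_{A^\complement}>2C_1\check\Gamma)\le\tfrac12$, and iterating the Markov property over consecutive blocks of $\lceil 2C_1\check\Gamma\rceil$ ticks yields a constant $c>0$ with $\sup_{x\in A}\xPr_x(\hat T_{A^\complement}>n)\le 2\,\ee^{-cn/\check\Gamma}$ for every $n\ge 0$ as $\lambda\to\infty$. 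This is precisely the statement one expects from the cycle / \cite{ManNarOliSco04}-type analysis of~\cite[Section~B.4]{HolNarTaa16} specialised to the set $A$; I would either cite it or record this short self-contained derivation.

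\emph{Step 2 (the conditionings are only polynomially small).} For~\ref{item:task:conditional-u:time-homogeneous}: from any $x\in A$ one reaches $u$ without ever visiting $v$ along an explicit path of length $\le\abs{U}+\abs{V}$ --- deactivate every currently active vertex of $V$ one at a time, then activate every vertex of $U$ one at a time --- and, since each prescribed move of $K^{(s)}$ has probability $\succeq 1/\gamma(s)$ (using $\lambda_U(s),\lambda_V(s)\succeq 1$ in the range under consideration, by Proposition~\ref{prop:critical-regime:orders-of-magnitude}), the chain follows this path with probability $\succeq\gamma^{-(\abs{U}+\abs{V})}$; on that event $\hat T_u<\hat T_v$. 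Hence $\inf_{x\in A}\xPr_x(\hat T_u<\hat T_v)\succeq\gamma^{-(\abs{U}+\abs{V})}$. For~\ref{item:task:conditional-v:time-homogeneous}: $\xPr_u(\hat T_v<\hat T^\circlearrowleft_u)=\varepsilon\asymp 1/\Gamma$, and $\Gamma=\pi(u)\Psi(u,v)$ is a rational function of $\lambda_U,\lambda_V$, so $\Gamma\preceq\gamma^{d}$ and $\varepsilon\succeq\gamma^{-d}$ for some graph-dependent $d$.

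\emph{Step 3 (assembling) and the main obstacle.} For~\ref{item:task:conditional-u:time-homogeneous}, since $\{\hat T_u>C\}\cap\{\hat T_u<\hat T_v\}\subseteq\{\hat T_{A^\complement}>C\}$, Steps~1--2 give $\xPr_x(\hat T_u>C\mid\hat T_u<\hat T_v)\preceq\ee^{-cC/\check\Gamma}\gamma^{\abs{U}+\abs{V}}$, which is $\smallo(1)$ uniformly in $x$ and $s$ once $C\succ\check\Gamma\log\gamma$. For~\ref{item:task:conditional-v:time-homogeneous}, on $\{\hat T_v<\hat T^\circlearrowleft_u\}$ the chain sits at some $X(1)\in A$ after the first tick (it cannot reach $v$ in one step, and $X(1)=u$ would force $\hat T^\circlearrowleft_u=1\le\hat T_v$) and then stays in $A$ until it hits $v$; hence $\xPr_u(\hat T_v>k,\hat T_v<\hat T^\circlearrowleft_u)\le\sup_{y\in A}\xPr_y(\hat T_{A^\complement}>k-1)\preceq\ee^{-c(k-1)/\check\Gamma}$, and dividing by $\varepsilon\succeq\gamma^{-d}$ makes the conditional probability $\smallo(1)$ once $k\succ\check\Gamma\log\gamma$ (the way I would read the hypothesis ``$C\succ\check\Gamma\log\gamma$'' for the parameter $k$ of~\ref{item:task:conditional-v:time-homogeneous}). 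I expect the main obstacle to be Step~1: obtaining the exponential tail in exactly this form and, crucially, with a rate whose constant $c$ is uniform over $s\in[0,M\tau]$ --- this uniformity is what Proposition~\ref{prop:critical-regime:orders-of-magnitude} secures, and without it the reduction collapses. The remaining items (the first-step analysis on the success event, and the discrete-versus-continuous-time bookkeeping) are routine.
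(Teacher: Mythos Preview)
Your proposal is correct and follows essentially the same route as the paper. The paper's proof simply cites~\cite[Proposition~B.9]{HolNarTaa16} as a black box (with $A=\pspace{X}\setminus\{u,v\}$, $B_1=\{u\}$, $B_2=\{v\}$ for~\ref{item:task:conditional-u:time-homogeneous}, and the roles of $u,v$ swapped after conditioning on the first step for~\ref{item:task:conditional-v:time-homogeneous}), obtaining the bound $\alpha^{C/\check\Gamma}\gamma^{\abs{\pspace{X}}-2}$. Your Steps~1--3 are precisely the content of that proposition unpacked: the exponential tail $\ee^{-cC/\check\Gamma}$ via Markov-plus-iteration is the paper's $\alpha^{C/\check\Gamma}$ (this is Proposition~B.8 there), and your crude path lower bound on the conditioning event plays the role of the paper's $\kappa^{\abs{\pspace{X}}-2}$. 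The only cosmetic differences are that you use a shorter explicit path of length $\abs{U}+\abs{V}$ (the paper uses an arbitrary simple path of length $\le\abs{\pspace{X}}-2$), and for~\ref{item:task:conditional-v:time-homogeneous} you bound the denominator directly by $\varepsilon\succeq\gamma^{-d}$ rather than re-applying B.9 after the first step; both variants are equivalent for the purpose at hand.
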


\begin{proof}
That~\ref{item:task:conditional-u:time-homogeneous} is satisfied follows from~\cite[Proposition~B.9]{HolNarTaa16} if we set $A\isdef\pspace{X}\setminus\{u,v\}$, $B_1\isdef\{u\}$ and $B_2\isdef\{v\}$. Namely, note that for this case $\kappa\asymp\nicefrac{1}{\gamma}$.  Set $\rho\isdef C/\check{\Gamma}$.  Then, by the above-mentioned proposition, there exists a constant $\alpha<1$ such that
\begin{equation}
\begin{aligned}
\sup_{x\notin\{u,v\}}\xPr^{(s)}_x\big(\hat{T}_u>C\,\big|\,\hat{T}_u<\delta\hat{T}_v\big) &\preceq
\alpha^\rho\kappa^{-\abs{\pspace{X}\setminus\{u,v\}}} \\
&= \alpha^{C/\check{\Gamma}}\gamma^{\abs{\pspace{X}}-2}
= \ee^{(C/\check{\Gamma})\log\alpha + (\abs{\pspace{X}}-2)\log\gamma},
\end{aligned}
\end{equation}
which tends to~$0$ as $\lambda\to\infty$.

The argument for~\ref{item:task:conditional-v:time-homogeneous} is similar.  First, condition on the state of the Markov chain after one step. The state of the chain after one step is $x\notin\{u,v\}$. Now, apply~\cite[Proposition~B.9]{HolNarTaa16} with $A\isdef\pspace{X}\setminus\{u,v\}$, $B_1\isdef\{v\}$ and $B_2\isdef\{u\}$ and note that $\kappa\asymp\nicefrac{1}{\gamma}$.
\end{proof}

\begin{proposition}
\label{prop:task:last-excursion:time-homogeneous:proof}
Subject to Assumption~{\rm \ref{keyassump}}, Condition~\ref{item:task:last-excursion:time-homogeneous} is satisfied.
\end{proposition}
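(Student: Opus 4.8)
The plan is to deduce condition~\ref{item:task:last-excursion:time-homogeneous} from the asymptotic exponentiality of the crossover time in the time-homogeneous model, which is available from~\cite{HolNarTaa16} under Assumption~\ref{keyassump}. Throughout I would fix $\tau>0$ (the only relevant range) and freeze the parameters at the values $\lambda_U(s),\lambda_V(s)$ with $s\in[0,M\tau]$. By Proposition~\ref{prop:critical-regime:orders-of-magnitude}, each of $\lambda_U(s)$, $\lambda_V(s)$, $\gamma(s)$, $\check{\varepsilon}(s)$, $\Gamma^{(s)}$ keeps a fixed order of magnitude over this range and the exponent $\alpha=\beta_V/\beta_U-1$ is constant, so it suffices to prove the estimate for a single $s$ while keeping track that the error terms imported from~\cite{HolNarTaa16} are uniform in $s\in[0,M\tau]$. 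The starting point is the elementary identity
\[
\xPr^{(s)}_u\big(S\leq M\tau<T_v\big)=\xPr^{(s)}_u(T_v>M\tau)-\xPr^{(s)}_u(S>M\tau),
\]
valid because $T_v=S+\delta T(S)>S$, so that $\{T_v>M\tau\}$ is the disjoint union of $\{S>M\tau\}$ and $\{S\leq M\tau<T_v\}$. Hence it is enough to show that $S$ and $T_v$ have the same limiting law after rescaling by $\xExp^{(s)}_u[T_v]$, uniformly in $s$.

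First I would show that $T_v/\xExp^{(s)}_u[T_v]$ converges in distribution to a unit exponential, uniformly in $s$. Lemma~\ref{lem:no-deep-well} gives the no-deep-well property $\check{\Gamma}^{(s)}\prec\Gamma^{(s)}$, so by~\cite[Corollary~B.7]{HolNarTaa16} the discrete-time crossover time obeys $\hat{T}_v/\xExp^{(s)}_u[\hat{T}_v]\to\mathrm{Exp}(1)$ in distribution; since $T_v$ is a sum of $\hat{T}_v$ i.i.d.\ $\mathrm{Exp}(\gamma(s))$ inter-tick times, independent of the embedded chain, and $\hat{T}_v\to\infty$ (it is of order $\Gamma^{(s)}$), a law-of-large-numbers argument transfers this to $T_v/\xExp^{(s)}_u[T_v]$, with $\xExp^{(s)}_u[T_v]=\xExp^{(s)}_u[\hat{T}_v]/\gamma(s)$. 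I would then obtain the same for $S$: by the regeneration structure (strong Markov property at the regeneration time $S$ together with time-homogeneity), $\delta T(S)=T_v-S$ is distributed as the crossover excursion conditioned on success, so
\[
\xExp^{(s)}_u[\delta T(S)]=\frac{\xExp^{(s)}_u\big[\hat{T}_v\mid\hat{T}_v<\hat{T}^\circlearrowleft_u\big]}{\gamma(s)}=\frac{\xExp^{(s)}_u[\hat{T}_v]\,\smallo(1)}{\gamma(s)}=\xExp^{(s)}_u[T_v]\,\smallo(1)
\]
by Proposition~\ref{prop:task:truncated-mean-and-conditional-return:time-homogeneous:proof}\ref{item:tasks:time-homogeneous:paper-1:conditional-crossover}; Markov's inequality then gives $\delta T(S)/\xExp^{(s)}_u[T_v]\to 0$ in probability, and by Slutsky's theorem $S/\xExp^{(s)}_u[T_v]=T_v/\xExp^{(s)}_u[T_v]-\delta T(S)/\xExp^{(s)}_u[T_v]$ also converges in distribution to a unit exponential.

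To finish, I would invoke P\'olya's theorem: since the limiting distribution function $x\mapsto 1-\ee^{-x}$ is continuous, the convergence of the distribution functions of $T_v/\xExp^{(s)}_u[T_v]$ and of $S/\xExp^{(s)}_u[T_v]$ is uniform in the argument, whence
\[
\xPr^{(s)}_u(T_v>M\tau)=\ee^{-M\tau/\xExp^{(s)}_u[T_v]}+\smallo(1),\qquad
\xPr^{(s)}_u(S>M\tau)=\ee^{-M\tau/\xExp^{(s)}_u[T_v]}+\smallo(1),
\]
uniformly in $s\in[0,M\tau]$; subtracting and using the identity above yields $\xPr^{(s)}_u(S\leq M\tau<T_v)=\smallo(1)$ uniformly in $s$, which is condition~\ref{item:task:last-excursion:time-homogeneous}.

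The main obstacle is not any individual step, each being either elementary or a citation, but the bookkeeping of \emph{uniformity in $s\in[0,M\tau]$}: one needs the exponential limit law, the conditional-mean bound of Proposition~\ref{prop:task:truncated-mean-and-conditional-return:time-homogeneous:proof}, and the relation $\varepsilon\asymp 1/\Gamma$ to hold uniformly over the frozen parameter values $\lambda_U(s),\lambda_V(s)$. This is exactly what Proposition~\ref{prop:critical-regime:orders-of-magnitude} secures, since it pins these parameters to constant orders of magnitude and keeps $\alpha$ constant, so the uniformity is inherited from the parameter-uniform estimates of~\cite{HolNarTaa16}. A secondary technical point is the passage from the discrete-time crossover time of~\cite{HolNarTaa16} to its continuous-time counterpart, which is harmless because the inter-tick holding times are i.i.d.\ exponential and independent of the embedded chain.
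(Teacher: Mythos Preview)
Your proposal is correct and uses the same key ingredients as the paper---asymptotic exponentiality of the crossover time and the bound $\xExp^{(s)}_u[\hat{T}_v\mid\hat{T}_v<\hat{T}^\circlearrowleft_u]=\smallo(\xExp^{(s)}_u[\hat{T}_v])$ from Proposition~\ref{prop:task:truncated-mean-and-conditional-return:time-homogeneous:proof}\ref{item:tasks:time-homogeneous:paper-1:conditional-crossover}---but the final packaging differs. You write
\[
\xPr^{(s)}_u(S\leq M\tau<T_v)=\xPr^{(s)}_u(T_v>M\tau)-\xPr^{(s)}_u(S>M\tau),
\]
show that both $T_v/\xExp^{(s)}_u[T_v]$ and $S/\xExp^{(s)}_u[T_v]$ converge to the same exponential law (the second via Slutsky, since $\delta T(S)/\xExp^{(s)}_u[T_v]\to 0$), and then invoke P\'olya's theorem to subtract. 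The paper instead rewrites the event as $\{M\tau-\delta T\leq S<M\tau\}$, chooses an intermediate scale $Q$ with $\xExp^{(s)}_u[\delta T]\prec Q\prec\xExp^{(s)}_u[T_v]$, splits according to $\{\delta T\geq Q\}$ versus $\{\delta T<Q\}$, and uses that the limiting exponential has no atoms so the short window $[M\tau-Q,M\tau]$ carries vanishing mass for $S$. Your route avoids introducing the auxiliary scale $Q$ at the cost of appealing to P\'olya and Slutsky; the paper's route is a single-line splitting argument once the exponential law for $S$ is cited from~\cite[Theorem~2.1]{HolNarTaa16}. Both arrive at the same conclusion with comparable effort.
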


\begin{proof}
Let $Q=Q(\lambda)$ be such that 
\begin{align}
\xExp^{(s)}_u\big[T_v\,\big|\,T_v<T^\circlearrowleft_u\big] 
&\prec Q \prec \xExp^{(s)}_u[T_v]\;, \qquad \lambda\to\infty.
\end{align}
By Proposition~\ref{prop:task:truncated-mean-and-conditional-return:time-homogeneous:proof}\ref{item:tasks:time-homogeneous:paper-1:conditional-crossover}, such a function $Q$ can be chosen. Let $\delta T\isdef T_v-S$, and note that $\delta T$ has distribution $\xPr_u\big(T_v\in\cdot\,\big|\,T_v<T^\circlearrowleft_u\big)$ and is independent of $S$. Hence
\begin{align}
\xPr_u(S\leq M\tau<T_v) 
&= \xPr_u(M\tau - \delta T\leq S < M\tau) \nonumber \\
&\leq \begin{multlined}[t]
\xPr(\delta T\geq Q)\xPr_u(M\tau - \delta T\leq S< M\tau \,|\, \delta T\geq Q) \\
+ \xPr(\delta T < Q)\xPr_u(M\tau - Q\leq S< M\tau).
\end{multlined}
\end{align}
By the Markov inequality, $\xPr(\delta T\geq Q)=\smallo(1)$. To bound the second term, recall from~\cite[Theorem~2.1]{HolNarTaa16} that the distribution of $S/\xExp[S]$ converges weakly as $\lambda\to\infty$ to an exponential distribution with rate~$1$. Hence
\begin{align}
\xPr_u(M\tau - Q\leq S< M\tau) 
&= \xPr_u\left(\frac{M\tau}{\xExp[S]} - \frac{Q}{\xExp[S]} 
\leq \frac{S}{\xExp[S]} < \frac{M\tau}{\xExp[S]}\right). 
\end{align}
Since $Q\prec\xExp[S]$ and the exponential distribution has no atom, we find that $\xPr_u(M\tau - Q\leq S< M\tau) = \smallo(1)$.
\end{proof}

In summary, we have the following solution for the parameters ensuring that the time-homogeneous Conditions~\ref{item:task:truncated-mean:time-homogeneous}--\ref{item:task:last-excursion:time-homogeneous} of the short-time regularity conditions are satisfied.

\begin{proposition}[\textbf{Short-time regularity: time-homogeneous setting}]
\label{prop:conditions-satisfied:time-homogeneous}
Subject to Assumption~\ref{keyassump} and the constraints 
\begin{align}
\label{Msand}
\frac{\log\gamma}{\gamma}\check{\Gamma}\prec M
\prec\frac{\log\gamma}{\gamma}\check{\Gamma}\Gamma,
\qquad \lambda\to\infty,
\end{align}
there is a choice of the parameters $C$, $m$, $\delta s$, $k$ for which Conditions~\ref{item:task:parameter:C}--\ref{item:task:parameter:E-vs-m} and~\ref{item:task:truncated-mean:time-homogeneous}--\ref{item:task:last-excursion:time-homogeneous} are satisfied. In particular, a proper choice is
\begin{align}
\label{eq:parameters:explicit-choices:time-homogeneous}
C & \isdef k \isdef A_1\check{\Gamma}\log\gamma, &
\delta s &\isdef A_1A_2\frac{\log\gamma}{\gamma}\check{\Gamma}, &
m & \isdef \frac{A_3 M\gamma}{A_1A_2\check{\Gamma}\log\gamma},
\end{align}
for any choices of $A_1=A_1(\lambda)$, $A_2=A_2(\lambda)$ and $A_3=A_3(\lambda)$ that tend to infinity sufficiently slowly as $\lambda\to\infty$.
\end{proposition}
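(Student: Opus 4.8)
The plan is essentially bookkeeping: substitute the explicit assignments~\eqref{eq:parameters:explicit-choices:time-homogeneous} into conditions~\ref{item:task:parameter:C}--\ref{item:task:parameter:E-vs-m} and~\ref{item:task:truncated-mean:time-homogeneous}--\ref{item:task:last-excursion:time-homogeneous}, and check that each reduces to one of finitely many requirements of the form ``$A_j$ diverges slowly enough'' (or ``$A_j\geq$ a constant''), which can then all be imposed simultaneously. The facts I would use repeatedly are: in the time-homogeneous setting $\gamma$ and $\check{\varepsilon}$ are constant in $s$, so $\int_0^{M\tau}\check{\varepsilon}(s)\gamma(s)\,\dd s=\check{\varepsilon}\gamma M\tau$; $\check{\varepsilon}\asymp 1/\Gamma$; $\gamma\to\infty$, hence $\log\gamma\to\infty$; and the no-deep-well estimate $\check{\Gamma}\succeq 1$, $\check{\Gamma}\log\gamma\prec\Gamma$ of Lemma~\ref{lem:no-deep-well}. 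The uniformity in $s\in[0,M\tau]$ required in~\ref{item:task:truncated-mean:time-homogeneous}--\ref{item:task:last-excursion:time-homogeneous} is furnished by Proposition~\ref{prop:critical-regime:orders-of-magnitude}, since all the frozen chains with parameters $\lambda_U(s),\lambda_V(s)$ for $s\in[0,M\tau]$ carry the same orders of magnitude for $\gamma$, $\Gamma$ and $\check{\Gamma}$.

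First I would clear away the conditions that do not constrain the parameters. Conditions~\ref{item:task:truncated-mean:time-homogeneous} and~\ref{item:task:conditional-return:time-homogeneous} follow from Proposition~\ref{prop:task:truncated-mean-and-conditional-return:time-homogeneous:proof}\ref{item:tasks:time-homogeneous:paper-1:conditional-return}: conditionally on $\hat{T}^\circlearrowleft_u<\hat{T}_v$, the non-negative excess $\hat{T}^\circlearrowleft_u-1$ has mean $\smallo(1)$, so by Markov $C\,\xPr^{(s)}_u(\hat{T}^\circlearrowleft_u>C+1\mid\hat{T}^\circlearrowleft_u<\hat{T}_v)\leq\xExp^{(s)}_u[\hat{T}^\circlearrowleft_u-1\mid\hat{T}^\circlearrowleft_u<\hat{T}_v]=\smallo(1)$ for every $C\geq 1$, and the truncated mean differs from the full mean $1+\smallo(1)$ by a tail term that is likewise $\smallo(1)$. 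Condition~\ref{item:task:last-excursion:time-homogeneous} is exactly Proposition~\ref{prop:task:last-excursion:time-homogeneous:proof}, and~\ref{item:task:parameter:delta-s:gamma} is vacuous since $\gamma$ is constant in $s$. Conditions~\ref{item:task:conditional-u:time-homogeneous} and~\ref{item:task:conditional-v:time-homogeneous} hold by Proposition~\ref{prop:task:conditional-u/v:time-homogeneous:proof} as soon as $C\succ\check{\Gamma}\log\gamma$ and $k\succ\check{\Gamma}\log\gamma$, which, with $C=k=A_1\check{\Gamma}\log\gamma$, is precisely $A_1\to\infty$.

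Next I would treat~\ref{item:task:parameter:C}, \ref{item:task:parameter:delta-s:small}, \ref{item:task:parameter:k} and~\ref{item:task:parameter:E-vs-m} one at a time. For~\ref{item:task:parameter:C}, $\check{\varepsilon}\,C\asymp A_1\,\check{\Gamma}\log\gamma/\Gamma$, which is $\smallo(1)$ once $A_1$ grows slowly, because $\check{\Gamma}\log\gamma/\Gamma\to 0$. For~\ref{item:task:parameter:delta-s:small} (taking $\tau>0$) the condition collapses to $\delta s=\smallo(M)$, i.e.\ $A_1A_2\,\tfrac{\log\gamma}{\gamma}\check{\Gamma}=\smallo(M)$, which holds for $A_1A_2$ slowly divergent since $\tfrac{\log\gamma}{\gamma}\check{\Gamma}\prec M$ by the left half of~\eqref{Msand}. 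For~\ref{item:task:parameter:k}, first $k=A_1\check{\Gamma}\log\gamma\leq\tfrac12 A_1A_2\check{\Gamma}\log\gamma=\tfrac12\gamma\,\delta s$ as soon as $A_2\geq 2$, so Lemma~\ref{lem:lower-bound:poisson} applies and gives $\delta_{M\tau}(m)\preceq M/(m\,\delta s)=1/A_3=\smallo(1)$. For~\ref{item:task:parameter:E-vs-m}, $(1-\check{\varepsilon})^m=1-\smallo(1)$ follows from $m\check{\varepsilon}=\smallo(1)$, and $m\check{\varepsilon}\asymp\tfrac{A_3}{A_1A_2}\cdot\tfrac{M}{(\log\gamma/\gamma)\check{\Gamma}\Gamma}$, which is $\smallo(1)$ provided $A_3/(A_1A_2)$ is not too large, since $\tfrac{M}{(\log\gamma/\gamma)\check{\Gamma}\Gamma}\to 0$ by the right half of~\eqref{Msand}. (One reads $C,k,m$ as the ceilings of the displayed expressions; since they all diverge this is harmless.) Collecting, the demands on $(A_1,A_2,A_3)$ are: each diverges; $A_2\geq 2$; $A_1\,\check{\Gamma}\log\gamma/\Gamma\to 0$; $A_1A_2\,\tfrac{\log\gamma}{\gamma}\check{\Gamma}/M\to 0$; $\tfrac{A_3}{A_1A_2}\cdot\tfrac{M}{(\log\gamma/\gamma)\check{\Gamma}\Gamma}\to 0$ --- finitely many ``slow-divergence'' constraints, hence jointly realizable, e.g.\ take $A_3$ diverging very slowly and then $A_1=A_2$ diverging even more slowly.

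I do not anticipate a real obstacle, since the substantive work sits in Lemmas~\ref{lem:no-deep-well} and~\ref{lem:lower-bound:poisson} and Propositions~\ref{prop:task:truncated-mean-and-conditional-return:time-homogeneous:proof}--\ref{prop:task:last-excursion:time-homogeneous:proof}. The one thing that needs care is the \emph{compatibility} of the scales: $\delta s$ must be large compared with $\check{\Gamma}\log\gamma/\gamma$ (so that $\asymp k$ ticks of $\rv{\xi}$ typically land in an interval of length $\delta s$, which is what makes Proposition~\ref{prop:lower-bound:typical} usable) yet small compared with $M$ (so that $\delta s$ is negligible on the macroscopic time scale), while $m\asymp M/\delta s$ must be large enough to drown out the rare short windows counted by $N_{M\tau}$ (giving $\delta_{M\tau}(m)\to 0$) yet small enough that $m$ Bernoulli trials each of probability $\asymp\check{\varepsilon}\asymp 1/\Gamma$ still all miss with probability $\to 1$ (giving $m\check{\varepsilon}\to 0$). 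The sandwich $\tfrac{\log\gamma}{\gamma}\check{\Gamma}\prec M\prec\tfrac{\log\gamma}{\gamma}\check{\Gamma}\Gamma$ in~\eqref{Msand} is exactly the non-emptiness of this regime, and~\eqref{eq:parameters:explicit-choices:time-homogeneous} picks a concrete point in it.
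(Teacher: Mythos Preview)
Your proposal is correct and follows essentially the same route as the paper: dispatch~\ref{item:task:truncated-mean:time-homogeneous}--\ref{item:task:last-excursion:time-homogeneous} via Propositions~\ref{prop:task:truncated-mean-and-conditional-return:time-homogeneous:proof}--\ref{prop:task:last-excursion:time-homogeneous:proof}, then substitute~\eqref{eq:parameters:explicit-choices:time-homogeneous} into~\ref{item:task:parameter:C}--\ref{item:task:parameter:E-vs-m} (the paper packages this step through Lemma~\ref{lem:tasks:parameters:simplified}, whereas you verify each item directly, but the arithmetic is identical). One small caveat: condition~\ref{item:task:parameter:delta-s:gamma} is not literally vacuous, since the proposition refers to the original conditions with time-varying $\gamma(s)$; however, it follows from $\delta s=\smallo(M)$, which you already establish for~\ref{item:task:parameter:delta-s:small}.
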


\begin{proof}
Conditions~\ref{item:task:truncated-mean:time-homogeneous}, \ref{item:task:conditional-return:time-homogeneous} and~\ref{item:task:last-excursion:time-homogeneous} are guaranteed by Proposition~\ref{prop:task:truncated-mean-and-conditional-return:time-homogeneous:proof} and Proposition~\ref{prop:task:last-excursion:time-homogeneous:proof}. According to Proposition~\ref{prop:task:conditional-u/v:time-homogeneous:proof}, Conditions~\ref{item:task:conditional-u:time-homogeneous} and~\ref{item:task:conditional-v:time-homogeneous} are satisfied if $C\succ\check{\Gamma}\log\gamma$ and $k\succ\check{\Gamma}\log\gamma$. So, set $C\isdef k\isdef A_1\check{\Gamma}\log\gamma$ for some $A_1\succ 1$ to be chosen later.
	
By Lemma~\ref{lem:tasks:parameters:simplified}, Conditions~\ref{item:task:parameter:C}--\ref{item:task:parameter:E-vs-m} are guaranteed if
\begin{align}
\label{conds}
C &= \Gamma\smallo(1),
&m &= \Gamma\smallo(1),
&\delta s &= \smallo(M)\;,
&k &\leq \tfrac{1}{2}\gamma \delta s,
&m\,\delta s &\succ M.
\end{align}
The first condition in \eqref{conds} is satisfied if $A_1\check{\Gamma}\log\gamma\prec\Gamma$. By Lemma~\ref{lem:no-deep-well}, the latter condition is satisfied as long as $A_1\to\infty$ sufficiently slowly as $\lambda\to\infty$.
	
In order to satisfy the third and the fourth condition in \eqref{conds}, we must be able to choose $\delta s$ such that $2A_1 \frac{\log\gamma}{\gamma}\check{\Gamma}\leq\delta s\prec M$. Set $\delta s \isdef A_1A_2\frac{\log\gamma}{\gamma}\check{\Gamma}$, for some $A_2\succ 1$ to be chosen later such that $A_1A_2\frac{\log\gamma}{\gamma}\check{\Gamma}\prec M$.
	
In order to satisfy the second and the fifth condition in \eqref{conds}, we must be able to choose $m$ such that $m\prec\Gamma$ and $mA_1A_2\frac{\log\gamma}{\gamma}\check{\Gamma}\succ M$. This can be done if $M\prec A_1A_2\frac{\log\gamma}{\gamma}\check{\Gamma}\Gamma$, in which case we can set $m\isdef A_3M/A_1A_2\frac{\log\gamma}{\gamma}\check{\Gamma}$ with $A_3$ tending to infinity sufficiently slowly as $\lambda\to\infty$.
	
Finally, if $\frac{\log\gamma}{\gamma}\check{\Gamma}\prec M\prec\frac{\log\gamma}{\gamma}\check{\Gamma}\Gamma$, then we are able to choose $A_1,A_2\succ 1$ such that all five conditions in \eqref{conds} are satisfied.
\end{proof}

In conclusion, in the time-homogeneous setting we have proved that Conditions~\ref{item:task:parameter:C}--\ref{item:task:parameter:E-vs-m} and~\ref{item:task:truncated-mean:time-homogeneous}--\ref{item:task:last-excursion:time-homogeneous} can be simultaneously met subject to mild conditions on the time scale $M$. These conditions capture the \emph{critical regime}. Indeed, since $\nu = \varepsilon\gamma \asymp \gamma/\Gamma$, the critical regime corresponds to $M \asymp \Gamma/\gamma$. Since $\check{\Gamma}\log\gamma \succ 1$ as $\lambda\to\infty$, the upper bound in \eqref{Msand} is matched. Since $\Gamma\succ\check{\Gamma}\log\gamma$ and $(\log \gamma)/\gamma\to 0$ as $\gamma\to\infty$, also the lower bound is matched.


\subsection{Short-time regularity in the time-inhomogeneous setting}
\label{sec:reginhom}

In this section we establish the short-time regularity Conditions~\ref{item:task:truncated-mean}--\ref{item:task:last-excursion}.  We focus on the critical regime $M\nu(0)\asymp 1$ when $M\prec\lambda$, or $M\asymp\lambda$ and $M\tau<\frac{c_U}{\mu_U}\lambda$. Inspired by Proposition~\ref{prop:conditions-satisfied:time-homogeneous}, we choose
\begin{align}
\label{eq:parameters:explicit-choices:time-inhomogeneous}
C & \isdef k \isdef A_1\check{\Gamma}^{(0)}\log\gamma(0), &
\delta s &\isdef A_1A_2\frac{\log\gamma(0)}{\gamma(0)}\check{\Gamma}^{(0)}, &
m & \isdef \frac{A_3 M\gamma(0)}{A_1A_2\check{\Gamma}^{(0)}\log\gamma(0)},
\end{align}
where $A_1=A_1(\lambda)$, $A_2=A_2(\lambda)$ and $A_3=A_3(\lambda)$ tend to infinity as $\lambda\to\infty$ sufficiently slowly. By Proposition~\ref{prop:conditions-satisfied:time-homogeneous} and Proposition~\ref{prop:critical-regime:orders-of-magnitude}, we already know that these choices satisfy~\ref{item:task:parameter:C}--\ref{item:task:parameter:E-vs-m} provided that
\begin{align}
\label{Msand:inhomogeneous}
\frac{\log\gamma(0)}{\gamma(0)}\check{\Gamma}^{(0)}\prec M
\prec\frac{\log\gamma(0)}{\gamma(0)}\check{\Gamma}^{(0)}\Gamma^{(0)},
\qquad \lambda\to\infty.
\end{align}
In the critical regime $M\gamma(0)\check{\varepsilon}(0)\asymp 1$, the latter condition is satisfied since $M\gamma(0)\asymp\Gamma^{(0)}$ and $\Gamma^{(0)}\succ\check{\Gamma}^{(0)}\log\gamma(0)$.

\paragraph*{Comparison}

Conditions~\ref{item:task:truncated-mean} and~\ref{item:task:conditional-return} will be handled by direct comparison with the time-homogeneous process analysed previously.

\begin{lemma}[\textbf{Short-term comparison of probabilities of events}]
\label{lem:markov-chain:short-term-comparison}
Let $(X_i)_{i\in\NN}$ and $(X'_i)_{i\in\NN}$ be two discrete-time Markov chains with the same finite state space $\pspace{X}$ and (possibly time-inhomogeneous) transition probabilities $K_i(x,y)$ and $K'_i(x,y)$ for $x,y\in\pspace{X}$. Suppose that $\beta>0$ is such that $K_i(x,y)\leq(1+\beta)K'_i(x,y)$ for all $i\in\NN$ and $x,y\in\pspace{X}$. Then, for every state $u\in\pspace{X}$ and every event $E\in\pspace{X}^n$ (with $n\in\NN$),
\begin{align}
\xPr\big( (X_1,\ldots,X_n)\in E \,\big|\, X_0=u \big) &\leq
(1+\beta)^n \xPr\big( (X'_1,\ldots,X'_n)\in E \,\big|\, X'_0=u \big).
\end{align}
\end{lemma}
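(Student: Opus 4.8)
The plan is to prove this by a straightforward termwise comparison of path probabilities, with the distortion factor $1+\beta$ accumulating exactly once per step. Writing $x_0\isdef u$, I would first invoke the (time-inhomogeneous) Markov property to expand, for any fixed trajectory $(x_1,\ldots,x_n)\in\pspace{X}^n$,
\[
\xPr\big((X_1,\ldots,X_n)=(x_1,\ldots,x_n)\,\big|\,X_0=u\big)=\prod_{i=1}^n K_i(x_{i-1},x_i),
\]
and likewise with $K'_i$ in place of $K_i$. Since $\pspace{X}$ is finite, $E\subseteq\pspace{X}^n$ is a finite set of trajectories, so summing over $(x_1,\ldots,x_n)\in E$ expresses both probabilities in the statement as finite sums of such products.

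Next I would apply the hypothesis $K_i(x,y)\leq(1+\beta)K'_i(x,y)$ factor by factor. Because each factor $K_i(x_{i-1},x_i)$ is non-negative, multiplying the $n$ inequalities gives
\[
\prod_{i=1}^n K_i(x_{i-1},x_i)\leq(1+\beta)^n\prod_{i=1}^n K'_i(x_{i-1},x_i)
\]
for every trajectory. Summing this over all trajectories in $E$ and pulling the constant $(1+\beta)^n$ out of the sum yields precisely
\[
\xPr\big((X_1,\ldots,X_n)\in E\,\big|\,X_0=u\big)\leq(1+\beta)^n\,\xPr\big((X'_1,\ldots,X'_n)\in E\,\big|\,X'_0=u\big),
\]
which is the claim.

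There is essentially no obstacle in the proof itself; the only point worth underlining is that the hypothesis is imposed on the one-step transition kernels, so the inflation is exactly $1+\beta$ per step and the bound over an $n$-step window is $(1+\beta)^n$ and no worse. In the intended application (Conditions~\ref{item:task:truncated-mean} and~\ref{item:task:conditional-return}), the two chains will be the discrete embedded hard-core chain with the time-varying rates $\lambda_U(s),\lambda_V(s)$ and its time-frozen counterpart; over the relevant short window of $n\leq C+1$ steps one uses Proposition~\ref{prop:critical-regime:orders-of-magnitude} to see that the one-step kernels differ by a factor $1+\smallo(1)$, so that, after checking $\beta\,C=\smallo(1)$, one has $(1+\beta)^{C+1}=1+\smallo(1)$.
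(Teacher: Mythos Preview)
Your proof is correct and is exactly the natural argument: expand path probabilities via the Markov property, apply the one-step bound factor by factor, and sum over trajectories in $E$. The paper in fact states this lemma without proof, treating it as elementary; your write-up supplies precisely the obvious verification the authors omitted.
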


\begin{lemma}[\textbf{Comparison of transition probabilities at different times}]
\label{lem:transition-probabilities:comparison}
For the time-inhomogeneous process under consideration, the transition matrices $K^{(s)}$ and $K^{(s')}$ at times $0\leq s,s'<\frac{c_U}{\mu_U}\lambda$ satisfy $K^{(s)}(x,y)\leq(1+\beta_{s,s'})K^{(s')}(x,y)$ for every $x,y\in\pspace{X}$, where $\beta_{s,s'}\asymp\abs{s-s'}/\lambda$ as $\lambda\to\infty$. The time instants $s,s'$ are allowed to depend on $\lambda$, but must satisfy $\abs{s-s'}=\smallo(\lambda)$ and $c_U\lambda-\mu_Us, c_U\lambda-\mu_Us'\succeq\lambda$ as $\lambda\to\infty$.
\end{lemma}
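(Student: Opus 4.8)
The plan is to express every entry of $K^{(s)}$ as a ratio of two expressions of the form $a+b\lambda_U(\cdot)+c\lambda_V(\cdot)$ with fixed nonnegative integer coefficients, and to show that each such expression changes by only a factor $1+\bigo(\abs{s-s'}/\lambda)$ under the time shift $s'\mapsto s$. For the rates themselves, from $\lambda_U(s)=(c_U\lambda-\mu_Us)^{\beta_U}$ we get
\[
\frac{\lambda_U(s)}{\lambda_U(s')}=\Big(1+\frac{\mu_U(s'-s)}{c_U\lambda-\mu_Us'}\Big)^{\beta_U},
\]
and the hypotheses $c_U\lambda-\mu_Us'\succeq\lambda$ and $\abs{s-s'}=\smallo(\lambda)$ make the bracketed correction $\smallo(1)$ and of order $\abs{s-s'}/\lambda$; since $t\mapsto t^{\beta_U}$ is Lipschitz on $[\tfrac12,\tfrac32]$, this gives $\lambda_U(s)/\lambda_U(s')=1+\bigo(\abs{s-s'}/\lambda)$, and likewise for $\lambda_V$ (for which $c_V\lambda+\mu_Vs\ge c_V\lambda\succeq\lambda$ holds automatically). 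Because the ratio of two positive combinations lies between the smallest and the largest of the ratios of their constituents, it follows that for every triple $(a,b,c)$ of nonnegative integers with $(a,b,c)\neq(0,0,0)$ the quantity $\Phi_{a,b,c}(s)\isdef a+b\lambda_U(s)+c\lambda_V(s)$ satisfies $\Phi_{a,b,c}(s)=\big(1+\bigo(\abs{s-s'}/\lambda)\big)\,\Phi_{a,b,c}(s')$, with an error constant depending only on $\beta_U,\beta_V$ and the constants implicit in the hypotheses, not on $(a,b,c)$; in particular this applies to $\gamma=\Phi_{\abs{U}+\abs{V},\,\abs{U},\,\abs{V}}$.

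Next I would identify the entries of $K^{(s)}$. Each off-diagonal entry is of the form $\Phi_{a,b,c}(s)/\gamma(s)$ with $(a,b,c)\in\{(1,0,0),(0,1,0),(0,0,1)\}$, and off-diagonal entries have the same support at $s$ and $s'$ because $\lambda_U(s),\lambda_V(s)>0$. The diagonal entry at $x$ equals $\big(\gamma(s)-R_s(x)\big)/\gamma(s)$, where $R_s(x)\isdef n^+_U(x)\lambda_U(s)+n^+_V(x)\lambda_V(s)+n(x)$, with $n^+_U(x)\le\abs{U}$ and $n^+_V(x)\le\abs{V}$ the numbers of vertices of $U$ and of $V$ that can be activated in $x$, and $n(x)\le\abs{U}+\abs{V}$ the number of active vertices; thus $\gamma(s)-R_s(x)=\Phi_{a_x,b_x,c_x}(s)$ with $a_x=\abs{U}+\abs{V}-n(x)$, $b_x=\abs{U}-n^+_U(x)$ and $c_x=\abs{V}-n^+_V(x)$ all nonnegative, and $(a_x,b_x,c_x)\neq(0,0,0)$ (if $b_x=c_x=0$ then every vertex is inactive, whence $n(x)=0$ and $a_x=\abs{U}+\abs{V}>0$), so in particular $K^{(s)}(x,x)>0$ for every admissible $s$. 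Combining this with the first paragraph, every entry satisfies
\[
\frac{K^{(s)}(x,y)}{K^{(s')}(x,y)}=\frac{\Phi(s)}{\Phi(s')}\cdot\frac{\gamma(s')}{\gamma(s)}=1+\bigo\big(\abs{s-s'}/\lambda\big),
\]
where $\Phi$ denotes the corresponding expression. Hence the lemma holds with $\beta_{s,s'}\isdef C\abs{s-s'}/\lambda$ for a suitable constant $C$, which trivially satisfies $\beta_{s,s'}\asymp\abs{s-s'}/\lambda$.

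The only delicate point is uniformity of the $\bigo$-terms, both over $x,y\in\pspace{X}$ — which is automatic, since the triples $(a,b,c)$ and $(a_x,b_x,c_x)$ range over a fixed finite set — and over all admissible pairs $(s,s')$, which is exactly what the two quantitative hypotheses provide: $\abs{s-s'}=\smallo(\lambda)$ bounds the size of the correction term uniformly, while $c_U\lambda-\mu_Us,\,c_U\lambda-\mu_Us'\succeq\lambda$ keeps $\lambda_U$ at its nominal order $\lambda^{\beta_U}$ and bounded away from $0$ (otherwise $\lambda_U$ could vanish near $s=\tfrac{c_U}{\mu_U}\lambda$ and the ratio would blow up). I do not anticipate any obstacle beyond this bookkeeping.
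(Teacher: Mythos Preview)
Your proof is correct and follows essentially the same approach as the paper: bound the ratios $\lambda_U(s)/\lambda_U(s')$, $\lambda_V(s)/\lambda_V(s')$ and $\gamma(s)/\gamma(s')$ by $1+\bigo(\abs{s-s'}/\lambda)$ using the explicit form of the rates, and conclude that every entry of $K^{(s)}$, being a ratio of such quantities, satisfies the same bound. Your treatment is in fact more careful than the paper's: the paper asserts that every nonzero entry is one of $\lambda_U/\gamma$, $\lambda_V/\gamma$ or $1/\gamma$, which is literally false for the diagonal entries $K^{(s)}(x,x)$, whereas you correctly identify these as $\Phi_{a_x,b_x,c_x}(s)/\gamma(s)$ with nonnegative integer coefficients and verify that $(a_x,b_x,c_x)\neq(0,0,0)$ so that the same ratio bound applies.
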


\begin{proof}
Recall that $K^{(s)}(x,y)$ is either $0$ or is of the form $\lambda_U(s)/\gamma(s)$, $\lambda_V(s)/\gamma(s)$ or $1/\gamma(s)$. We show that, for every $0\leq s,s'\leq\frac{c_U}{\mu_U}\lambda$,
\begin{align}
\frac{1}{1+\theta_{s,s'}}&\leq\frac{\lambda_U(s)}{\lambda_U(s')}\leq 1+\theta_{s,s'}, 
&\frac{1}{1+\theta_{s,s'}}&\leq\frac{\lambda_V(s)}{\lambda_V(s')}\leq 1+\theta_{s,s'},
\end{align}
where $\theta_{s,s'}\asymp\abs{s-s'}/\lambda$. Indeed, 
\begin{align}
\frac{\lambda_U(s)}{\lambda_U(s')} 
&= \frac{(c_U\lambda-\mu_U s)^{\beta_U}}{(c_U\lambda-\mu_U s')^{\beta_U}} 
=  \left[1 + \frac{\mu_U(s'-s)}{c_U\lambda-\mu_Us'}\right]^{\beta_U},
\end{align}
which is $\leq 1+\theta_{s,s'}$ when $\theta_{s,s'}\geq\frac{2\beta_U\mu_U\abs{s'-s}}{c_U\lambda-\mu_Us'} \asymp \abs{s-s'}/\lambda$.  The opposite inequality follows by symmetry. The inequality for $\lambda_V$ follows similarly.

Since $\gamma(s)$ has the same order of magnitude as $\lambda_V(s)$, it follows that
\begin{align}
\frac{1}{1+\theta_{s,s'}}&\leq\frac{\gamma(s)}{\gamma(s')}\leq 1+\theta_{s,s'}
\end{align}
and hence
\begin{align}
\frac{1}{(1+\theta_{s,s'})^2}&\leq\frac{K^{(s)}(x,y)}{K^{(s)}(x,y)}\leq (1+\theta_{s,s'})^2
\end{align}
for every $x,y\in\pspace{X}$. Thus, the claim holds with $\beta_{s,s'}=2\theta_{s,s'}+\theta_{s,s'}^2\asymp\theta_{s,s'}\asymp\abs{s-s'}/\lambda$.
\end{proof}

The combination of Lemma~\ref{lem:markov-chain:short-term-comparison} and Lemma~\ref{lem:transition-probabilities:comparison} leads us to the following proposition.

\begin{proposition}[\textbf{Short-term comparison with time-homogeneous process}]
\label{prop:time-homogeneous-inhomogeneous:comparison:short-term}
Let $N=N(\lambda)$ be a non-negative integer such that $N\succ 1$ as $\lambda\to\infty$. Consider the time scaling $t=M\tau$ with $M=M(\lambda)$ and $\tau\in[0,\infty)$, and suppose that either $1\preceq M\prec\lambda$, or $M\asymp\lambda$ and $\tau<\frac{c_U}{\mu_U}\frac{\lambda}{M}$. Set $T_0\isdef s$, and let $T_1,T_2,\ldots$ be the consecutive points of the Poisson process $\rv{\xi}$ after time~$s$. Then, for every state $x\in\pspace{X}$ and every event $E\in\pspace{X}^N$, there exists a constant $D_N$ such that
\begin{align}
\MoveEqLeft\nonumber
\xPr\big(\big[X(T_0), X(T_1),\ldots,X(T_N)\big]\in E\,\big|\,X(s)=x,\rv{\xi}=\xi\big) \\
&\leq
D_N\xPr^{(s)}\big(\big[\hat{X}(0), \hat{X}(1),\ldots,\hat{X}(N)\big]\in E\,\big|\,\hat{X}(0)=x\big)
\end{align}
as $\lambda\to\infty$, uniformly in $s\in[0,M\tau]$ and $\xi$ satisfying $\xi([s,s+\lambda/N])\geq N$.
\end{proposition}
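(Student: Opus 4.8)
The plan is to condition on $\rv{\xi}=\xi$, view the sampled sequence $(X(T_0),\ldots,X(T_N))$ as a time-inhomogeneous Markov chain whose step kernels are uniformly close to the frozen kernel $K^{(s)}$, and then combine Lemma~\ref{lem:transition-probabilities:comparison} (comparison of the kernels) with Lemma~\ref{lem:markov-chain:short-term-comparison} (comparison of path probabilities). Concretely: conditionally on $\rv{\xi}=\xi$, the sequence $(X(T_0),X(T_1),\ldots,X(T_N))$ is a Markov chain started at $X(T_0)=x$ whose transition from $X(T_{i-1})$ to $X(T_i)$ is governed by $K^{(T_i)}$, since at the tick $T_i$ the process jumps according to $K^{(T_i)}(X(T_i^-),\cdot)$. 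In the notation of Lemma~\ref{lem:markov-chain:short-term-comparison} this is the chain with $K_i\isdef K^{(T_i)}$, whereas the benchmark chain $\hat{X}$ with parameters frozen at time $s$ is the one with $K'_i\isdef K^{(s)}$ for every $i$.

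Next I would check the hypotheses of Lemma~\ref{lem:transition-probabilities:comparison} with $s'\isdef s$. The gap condition $\xi([s,s+\lambda/N])\geq N$ places the first $N$ ticks of $\rv{\xi}$ after $s$ inside $[s,s+\lambda/N]$, so $0\leq T_i-s\leq\lambda/N$ for all $i\leq N$; since $N\succ1$ this gives $\abs{T_i-s}=\smallo(\lambda)$. Moreover $s,T_i\leq M\tau+\lambda/N$, which is $\smallo(\lambda)$ when $1\preceq M\prec\lambda$, so that $c_U\lambda-\mu_U s\asymp c_U\lambda-\mu_U T_i\asymp\lambda$; and when $M\asymp\lambda$ with $\tau<\tfrac{c_U}{\mu_U}\tfrac{\lambda}{M}$ we have $M\tau\leq(1-\epsilon)\tfrac{c_U}{\mu_U}\lambda$ for some fixed $\epsilon>0$, so, together with $\lambda/N=\smallo(\lambda)$, again $c_U\lambda-\mu_U s,\,c_U\lambda-\mu_U T_i\succeq\lambda$ for $\lambda$ large. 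Hence Lemma~\ref{lem:transition-probabilities:comparison} applies and yields $K^{(T_i)}(y,z)\leq(1+\beta_{T_i,s})K^{(s)}(y,z)$ for all $y,z\in\pspace{X}$ and all $i\in\{1,\ldots,N\}$, with $\beta_{T_i,s}\preceq\abs{T_i-s}/\lambda$. Since $\abs{T_i-s}\leq\lambda/N$, there is a constant $c$ (depending only on the model parameters, and in the second regime on $\tau$) such that $\beta_{T_i,s}\leq c/N$ for all large $\lambda$, uniformly in $i$, in $s\in[0,M\tau]$, and in $\xi$ with the gap property.

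Finally I would apply Lemma~\ref{lem:markov-chain:short-term-comparison} with the uniform step bound $\beta\isdef c/N$ and initial state $x$. Conditioned on $X(T_0)=x$, the event $\{[X(T_0),\ldots,X(T_N)]\in E\}$ has the form $\{(X(T_1),\ldots,X(T_N))\in E_x\}$ for a suitable $E_x\in\pspace{X}^N$, and the lemma bounds its conditional probability by $(1+c/N)^N$ times $\xPr^{(s)}\big((\hat{X}(1),\ldots,\hat{X}(N))\in E_x\,\big|\,\hat{X}(0)=x\big)$; re-attaching the deterministic coordinate $\hat{X}(0)=x$ on the right-hand side recovers the original event $E$. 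Since $(1+c/N)^N\leq\ee^{c}$ for every $N\geq1$, the statement holds with $D_N\isdef\ee^{c}$ (equivalently $D_N\isdef(1+c/N)^N\to\ee^{c}$), which is bounded uniformly in $\lambda$.

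The argument is essentially mechanical, so there is no genuine obstacle. The only point requiring care is the middle step: verifying uniformly — over $i$, over $s\in[0,M\tau]$, and over admissible $\xi$ — that $\abs{T_i-s}=\smallo(\lambda)$ and that $s,T_i$ stay bounded away from the singularity $\tfrac{c_U}{\mu_U}\lambda$ of $\lambda_U$, which in the regime $M\asymp\lambda$ is precisely what the hypothesis $\tau<\tfrac{c_U}{\mu_U}\tfrac{\lambda}{M}$ provides, and correspondingly that the constant $c$, hence $D_N$, can be chosen independently of $\lambda$.
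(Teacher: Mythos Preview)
Your proposal is correct and follows essentially the same approach as the paper: set $\Delta\isdef\lambda/N$, use Lemma~\ref{lem:transition-probabilities:comparison} to get a uniform bound $K^{(s')}(x,y)\leq(1+B/N)K^{(s)}(x,y)$ for all $s'\in[s,s+\Delta]$, and then apply Lemma~\ref{lem:markov-chain:short-term-comparison} to obtain $D_N\isdef(1+B/N)^N\leq\ee^B$. Your write-up is in fact more explicit than the paper's own proof in checking the hypotheses of Lemma~\ref{lem:transition-probabilities:comparison} (that $\abs{T_i-s}=\smallo(\lambda)$ and that $s,T_i$ stay bounded away from $\tfrac{c_U}{\mu_U}\lambda$), which the paper leaves implicit.
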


\begin{proof}
Let $\Delta\isdef\lambda/N$ and note that $\Delta=\smallo(\lambda)$. By Lemma~\ref{lem:transition-probabilities:comparison}, there exists a constant $B$ such that $K^{(s')}(x,y)\leq(1+B/N)K^{(s)}(x,y)$ for every $s'$ satisfying $s\leq s'\leq s+\Delta$ and every $x,y\in\pspace{X}$. Therefore, by Lemma~\ref{lem:markov-chain:short-term-comparison}, on the event $\big\{\rv{\xi}([s,s+\Delta])\geq N\big\}$,
\begin{align}
\MoveEqLeft\nonumber
\xPr\big(\big[X(T_0), X(T_1),\ldots,X(T_N)\big]\in E\,\big|\,\rv{\xi},X(s)=x\big) \\
&\leq
D_N\xPr^{(s)}\big(\big[\hat{X}(0), \hat{X}(1),\ldots,\hat{X}(N)\big]\in E\,\big|\,\hat{X}(0)=x\big)
\end{align}
where $D_N\isdef(1+B/N)^N\leq\ee^B$.
\end{proof}

\begin{lemma}
\label{lem:time-inhomogeneous:poisson:good-realizations}
Consider the time scaling $t=M\tau$ with $M=M(\lambda)$ and $\tau\in[0,\infty)$, and suppose that either $1\preceq M\prec\lambda$, or $M\asymp\lambda$ and $\tau<\frac{c_U}{\mu_U}\frac{\lambda}{M}$. Let $N=N(\lambda)$ be a non-negative integer such that $N^2\prec\gamma(0)\lambda$. Then, with probability $1-\smallo(1)$ as $\lambda\to\infty$, $\rv{\xi}([s,s+\lambda/N])\geq N$ for every $s\in[0,M\tau]$.
\end{lemma}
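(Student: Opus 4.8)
The plan is to discretise the time axis, bound the number of Poisson points in each window of a polynomially-sized grid, and finish with a union bound; the only quantitative input needed is that the rate $\gamma$ stays bounded below by a fixed multiple of $\gamma(0)$ on the relevant range.

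First I would record a uniform lower bound on the rate. Since $\lambda_V(u)=(c_V\lambda+\mu_V u)^{\beta_V}$ is non-decreasing in $u$ and $\gamma(u)\geq\abs{V}\lambda_V(u)$, one has $\gamma(u)\geq\abs{V}\lambda_V(0)\asymp\gamma(0)$ for every $u\geq 0$, so there is a constant $c>0$ with $\gamma(u)\geq c\,\gamma(0)$ on $[0,\infty)$ for all large $\lambda$. (This elementary bound is preferable to invoking Proposition~\ref{prop:critical-regime:orders-of-magnitude}, since the relevant windows may poke slightly past $M\tau$.) Next I would set up the grid: with $\Delta\isdef\lambda/N$ and $s_j\isdef j\Delta/2$ for $j=0,1,\ldots,J$, $J\isdef\lceil 2M\tau/\Delta\rceil$, every interval $[s,s+\Delta]$ with $s\in[0,M\tau]$ contains the half-window $W_j\isdef[s_j,s_j+\Delta/2]$ for $j\isdef\lceil 2s/\Delta\rceil\leq J$, because then $s\leq s_j<s+\Delta/2$. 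Hence it suffices to show $\xPr\big(\rv{\xi}(W_j)<N\text{ for some }j\leq J\big)=\smallo(1)$. Note that in both scaling regimes $M\tau\preceq\lambda$ and $M\preceq\lambda$, so $J\preceq MN/\lambda+1$.

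Second, for a fixed window I would use the crude Poisson tail bound already employed in the proof of Lemma~\ref{lem:lower-bound:poisson}. The variable $\rv{\xi}(W_j)$ is Poisson with mean $\mu_j=\int_{s_j}^{s_j+\Delta/2}\gamma(u)\,\dd u\geq\tfrac{c}{2}\gamma(0)\Delta=\tfrac{c}{2}\gamma(0)\lambda/N$. The hypothesis $N^2\prec\gamma(0)\lambda$ yields $N\leq\mu_j/2$ for all large $\lambda$, so Chebyshev's inequality gives
\begin{align}
\xPr\big(\rv{\xi}(W_j)<N\big)\ \leq\ \frac{\mu_j}{(\mu_j-N)^2}\ \leq\ \frac{4}{\mu_j}\ \leq\ \frac{8N}{c\,\gamma(0)\lambda}.
\end{align}
A union bound over the $J\preceq MN/\lambda+1$ windows then gives
\begin{align}
\xPr\big(\rv{\xi}(W_j)<N\text{ for some }j\leq J\big)\ \preceq\ \Big(\frac{MN}{\lambda}+1\Big)\frac{N}{\gamma(0)\lambda}\ =\ \frac{MN^2}{\gamma(0)\lambda^2}+\frac{N}{\gamma(0)\lambda}.
\end{align}
Using $M\preceq\lambda$, the first term is $\preceq N^2/(\gamma(0)\lambda)=\smallo(1)$; and since $N\prec\sqrt{\gamma(0)\lambda}$ and $\gamma(0)\lambda\to\infty$, the second term is $\preceq(\gamma(0)\lambda)^{-1/2}=\smallo(1)$. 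This proves the claim.

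I do not expect a genuine obstacle: the argument is a standard continuum-to-grid reduction together with a Poisson concentration estimate of exactly the type used elsewhere in Section~\ref{S4}. The two points that require a little care are (i) the half-window trick, which is what converts the supremum over all shifts $s\in[0,M\tau]$ into a supremum over only $J=\bigo(MN/\lambda)$ events, and (ii) verifying that the quantitative hypothesis $N^2\prec\gamma(0)\lambda$ is exactly strong enough — it is used both to make the per-window failure probability of order $N/(\gamma(0)\lambda)$ and, after multiplying by the $\asymp MN/\lambda$ windows and invoking $M\preceq\lambda$, to keep the total $\smallo(1)$.
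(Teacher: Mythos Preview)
Your proof is correct and follows essentially the same approach as the paper's: both set $\Delta=\lambda/N$, partition $[0,M\tau]$ into half-windows of length $\Delta/2$, bound each Poisson count via Chebyshev, and finish with a union bound over the $\bigo(MN/\lambda)$ windows. The one minor difference is that you bound $\gamma(u)$ from below directly via $\gamma(u)\geq\abs{V}\lambda_V(0)\asymp\gamma(0)$, whereas the paper invokes Proposition~\ref{prop:critical-regime:orders-of-magnitude}; your version is slightly cleaner since, as you note, it covers windows that may extend just past $M\tau$.
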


\begin{proof}
Let $\Delta\isdef\lambda/N$. Note that $X\isdef\rv{\xi}([s,s+\Delta/2])$ is a Poisson random variable with parameter $\int_s^{s+\Delta/2}\gamma(u)\dd u\asymp\gamma(0)\Delta/2$, where the last asymptotic equality holds by Proposition~\ref{prop:critical-regime:orders-of-magnitude}. By the Chebyshev inequality, uniformly in $s\in[0,M\tau]$,
\begin{align}
\xPr\big(\rv{\xi}([s,s+\Delta/2])<N\big) 
&= \xPr\big(\xExp[X]-X\geq\xExp[X]-N\big) \leq \frac{\xVar[X]}{(\xExp[X]-N)^2} \\
&\asymp \nonumber \frac{\gamma(0)\Delta/2}{(\gamma(0)\Delta/2-N)^2}
= \frac{2\gamma(0)\Delta}{\big(\gamma(0)\Delta\big)^2\big(1-\frac{4N^2}{\gamma(0)\lambda}\big)^2}
\asymp \frac{1}{\gamma(0)\Delta}.
\end{align}
Partition the interval $[0,M\tau]$ into segments of length $\Delta/2$. Note that if $\rv{\xi}$ has at least $N$ points in each of these segments, then $\rv{\xi}([s,s+\Delta])\geq N$ for every $s\in[0,M\tau]$. Hence
\begin{align}
\xPr\big(\text{$\rv{\xi}([s,s+\Delta])<N$ for some $s\in[0,M\tau]$}\big) &\preceq
\left\lceil\frac{M\tau}{\Delta/2}\right\rceil\times\frac{1}{\gamma(0)\Delta}
\preceq \frac{\lambda N^2}{\gamma(0)\lambda^2} = \smallo(1),
\end{align}
as claimed.
\end{proof}

\paragraph*{Verification of \ref{item:task:truncated-mean}--\ref{item:task:last-excursion}}

In what follows we assume the following:
\begin{itemize}
\item[$(\ast)$]
Suppose that Assumptions~\textup{\ref{keyassump}--\ref{keyassumpextra}} are satisfied. Consider the time scaling $s=M\sigma$ with $M=M(\lambda)$ and $\sigma\in[0,\infty)$, and suppose that either $1\preceq M\prec\lambda$, or $M\asymp\lambda$ and $\sigma<\frac{c_U}{\mu_U}\frac{\lambda}{M}$. Suppose further that $M\check{\nu}(0)\asymp 1$.
\end{itemize}

Applying Proposition~\ref{prop:time-homogeneous-inhomogeneous:comparison:short-term} and Lemma~\ref{lem:time-inhomogeneous:poisson:good-realizations}, we can now establish the short-time regularity Conditions~\ref{item:task:truncated-mean} and~\ref{item:task:conditional-return} by comparison with the time-homogeneous setting.

\begin{proposition}
\label{prop:time-inhomogeneous:short-term-regularity:I-II}
Let $C$ be as in~\eqref{eq:parameters:explicit-choices:time-inhomogeneous}.
Subject to $(\ast)$, Conditions~\ref{item:task:truncated-mean} and~\ref{item:task:conditional-return} are satisfied. 
\end{proposition}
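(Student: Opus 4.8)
The plan is to reduce both conditions to their time-homogeneous counterparts \ref{item:task:truncated-mean:time-homogeneous} and \ref{item:task:conditional-return:time-homogeneous}, which hold with the parameters~\eqref{eq:parameters:explicit-choices:time-inhomogeneous} by Proposition~\ref{prop:conditions-satisfied:time-homogeneous} together with Proposition~\ref{prop:critical-regime:orders-of-magnitude} (the latter making the replacement of $\check{\Gamma},\gamma$ by $\check{\Gamma}^{(0)},\gamma(0)$ harmless and rendering the time-homogeneous estimates uniform in $s\in[0,M\tau]$). I would set $N\isdef C+1$ and let $\Xi_{M\tau}$ be the event that $\rv{\xi}\big([s,s+\lambda/N]\big)\geq N$ for every $s\in[0,M\tau]$. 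The first task is to check $\xPr(\rv{\xi}\notin\Xi_{M\tau})=\smallo(1)$ via Lemma~\ref{lem:time-inhomogeneous:poisson:good-realizations}, which demands $N^2\prec\gamma(0)\lambda$; this is exactly where Assumption~\ref{keyassumpextra} enters. Since $\check{\Gamma}^{(0)}$ and $\Gamma^{(0)}$ are increasing rational functions of $\lambda$, the relation $\check{\Gamma}^{(0)}\prec\sqrt{\Gamma^{(0)}}$ forces a genuine polynomial gap, so $C=A_1\check{\Gamma}^{(0)}\log\gamma(0)\prec\sqrt{\Gamma^{(0)}}$ when $A_1\to\infty$ slowly; combining with the critical-regime identity $\Gamma^{(0)}\asymp M\gamma(0)$ (from $M\check{\nu}(0)\asymp1$ and $\check{\varepsilon}(0)\asymp1/\Gamma^{(0)}$) and with $M\preceq\lambda$ gives $N^2\asymp C^2\prec\Gamma^{(0)}\preceq\gamma(0)\lambda$. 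On $\Xi_{M\tau}$, Proposition~\ref{prop:time-homogeneous-inhomogeneous:comparison:short-term} then dominates, up to a factor $D_N\leq\ee^{B}$ bounded uniformly in $\lambda$ (but not tending to $1$), the probability of any event of horizon at most $N$ for the inhomogeneous chain started from $X(s)=u$ by the corresponding probability for the frozen time-homogeneous chain $\xPr^{(s)}_u$. Throughout, the common denominator $\xPr\big(\delta\hat{T}^\circlearrowleft_u(s)<\delta\hat{T}_v(s)\,\big|\,X(s)=u,\rv{\xi}=\xi\big)=1-\xPr(B(s)=1\,|\,\rv{\xi}=\xi)$ equals $1-\smallo(1)$, uniformly, by monotonicity and the truncation~\eqref{eq:assumption:freezing-at-the-end}.

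For \ref{item:task:conditional-return} I would use the inclusion $\{\delta\hat{T}^\circlearrowleft_u(s)>C+1,\ \delta\hat{T}^\circlearrowleft_u(s)<\delta\hat{T}_v(s)\}\subseteq\{X(T_j)\notin\{u,v\}\text{ for }j=1,\dots,C+1\}$, the right-hand side being a horizon-$N$ event. The comparison then bounds the conditional probability in \ref{item:task:conditional-return} by $D_N\,\xPr^{(s)}_u\big(\hat{T}^\circlearrowleft_u>C+1,\hat{T}_v>C+1\big)\big/(1-\smallo(1))$, and since $\xPr^{(s)}_u(\hat{T}^\circlearrowleft_u>C+1,\hat{T}_v>C+1)\leq\xPr^{(s)}_u(\hat{T}^\circlearrowleft_u>C+1,\ \hat{T}^\circlearrowleft_u<\hat{T}_v)+\check{\varepsilon}(s)$, conditions \ref{item:task:conditional-return:time-homogeneous} and \ref{item:task:parameter:C} yield $C\,\xPr^{(s)}_u(\hat{T}^\circlearrowleft_u>C+1,\hat{T}_v>C+1)=\smallo(1)$. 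Multiplying by the bounded $D_N$ and dividing by $1-\smallo(1)$ preserves $\smallo(1)$, giving \ref{item:task:conditional-return} uniformly in $\xi\in\Xi_{M\tau}$ and $s\in\bar{\xi}\cap[0,M\tau]$.

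For \ref{item:task:truncated-mean} the key is the decomposition $\delta\hat{T}^\circlearrowleft_u(s)\,\indicator{\delta\hat{T}^\circlearrowleft_u(s)\leq C+1}=\indicator{\delta\hat{T}^\circlearrowleft_u(s)\leq C+1}+(\delta\hat{T}^\circlearrowleft_u(s)-1)\,\indicator{\delta\hat{T}^\circlearrowleft_u(s)\leq C+1}$, valid because $\delta\hat{T}^\circlearrowleft_u(s)\geq1$. The conditional expectation of the first summand is $\xPr(\delta\hat{T}^\circlearrowleft_u(s)\leq C+1\,|\,\cdots)=1-\smallo(1)$ by \ref{item:task:conditional-return}, which already delivers the lower bound $\xExp[\cdots]\geq1-\smallo(1)$. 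For the second summand, Abel summation bounds its conditional expectation by $(1-\smallo(1))^{-1}\sum_{k=2}^{C+1}\xPr\big(\delta\hat{T}^\circlearrowleft_u(s)\geq k,\ \delta\hat{T}^\circlearrowleft_u(s)<\delta\hat{T}_v(s)\,\big|\,X(s)=u,\rv{\xi}=\xi\big)$; each summand lies in $\{X(T_j)\notin\{u,v\}\text{ for }j=1,\dots,k-1\}$, hence is $\leq D_N\,\xPr^{(s)}_u(\hat{T}^\circlearrowleft_u\wedge\hat{T}_v\geq k)$ by the comparison, and $\sum_{k\geq2}\xPr^{(s)}_u(\hat{T}^\circlearrowleft_u\wedge\hat{T}_v\geq k)=\xExp^{(s)}_u[\hat{T}^\circlearrowleft_u\wedge\hat{T}_v]-1=\smallo(1)$ by Proposition~\ref{prop:task:truncated-mean-and-conditional-return:time-homogeneous:proof} (the return time conditioned on failure has mean $1+\smallo(1)$, while the crossover time conditioned on success contributes $\check{\varepsilon}(s)\,\xExp^{(s)}_u[\hat{T}_v]\,\smallo(1)=\smallo(1)$, using $\check{\varepsilon}(s)\asymp1/\Gamma^{(s)}$ and $\xExp^{(s)}_u[\hat{T}_v]\asymp\Gamma^{(s)}$). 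Hence the second summand is $\smallo(1)$ even after multiplication by the bounded $D_N$, so $\xExp[\cdots]\leq1+\smallo(1)$, and with the lower bound this proves \ref{item:task:truncated-mean}. The main obstacle is precisely the mismatch between the comparison constant $D_N$, which only stays bounded and does not converge to $1$, and the sharp target $1+\smallo(1)$ in \ref{item:task:truncated-mean}: it is circumvented by arranging that $D_N$ multiplies only quantities already seen to be $\smallo(1)$, the leading $1$ having been split off by the deterministic inequality $\delta\hat{T}^\circlearrowleft_u(s)\geq1$; the only other delicate point is the inequality $C^2\prec\gamma(0)\lambda$, for which Assumption~\ref{keyassumpextra} is indispensable.
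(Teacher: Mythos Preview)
Your approach is essentially that of the paper: set $N=C+1$, define $\Xi_{M\tau}$ via Lemma~\ref{lem:time-inhomogeneous:poisson:good-realizations}, verify $N^2\prec\gamma(0)\lambda$ through Assumption~\ref{keyassumpextra} and the critical-regime identity $\Gamma^{(0)}\asymp M\gamma(0)\preceq\gamma(0)\lambda$, remove the conditioning on $\{\delta\hat{T}^\circlearrowleft_u<\delta\hat{T}_v\}$ by monotonicity and~\eqref{eq:assumption:freezing-at-the-end}, and then invoke Proposition~\ref{prop:time-homogeneous-inhomogeneous:comparison:short-term} to compare with the frozen chain. Your treatment of~\ref{item:task:conditional-return} matches the paper's.

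Where you differ from the paper is in the handling of~\ref{item:task:truncated-mean}, and here your argument is actually more careful. The paper applies the comparison directly to the truncated mean, obtaining
\[
\xExp\big[\delta\hat{T}^\circlearrowleft_u(s)\indicator{\delta\hat{T}^\circlearrowleft_u(s)\leq C+1}\,\big|\,\rv{\xi}=\xi,\,X(s)=u\big]\leq D_N\,\xExp^{(s)}_u\big[\hat{T}^\circlearrowleft_u\indicator{\hat{T}^\circlearrowleft_u\leq C+1}\big],
\]
and then notes that the right-hand expectation is $1+\smallo(1)$. But since $D_N=(1+B/N)^N$ converges to $\ee^B$ rather than to~$1$, this only yields an upper bound of order $\ee^B$, not $1+\smallo(1)$. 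You correctly identify this obstacle and resolve it: by writing $\delta\hat{T}^\circlearrowleft_u\indicator{\cdot}=\indicator{\cdot}+(\delta\hat{T}^\circlearrowleft_u-1)\indicator{\cdot}$ and using $\delta\hat{T}^\circlearrowleft_u\geq 1$, the leading term contributes $1-\smallo(1)$ directly (via~\ref{item:task:conditional-return}), while the comparison constant $D_N$ multiplies only the tail sum $\sum_{k\geq 2}\xPr^{(s)}_u(\hat{T}^\circlearrowleft_u\wedge\hat{T}_v\geq k)=\xExp^{(s)}_u[\hat{T}^\circlearrowleft_u\wedge\hat{T}_v]-1=\smallo(1)$, which is harmless. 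Your argument for this last identity via Proposition~\ref{prop:task:truncated-mean-and-conditional-return:time-homogeneous:proof} is correct. So your proof patches a genuine oversight in the paper's presentation while otherwise following the same route.
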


\begin{proof}
First recall, from monotonicity and assumption~\eqref{eq:assumption:freezing-at-the-end}, that almost surely
\begin{align}
\xPr\big(\delta\hat{T}^\circlearrowleft_u>\delta\hat{T}_v\,\big|\,\rv{\xi},X(s)=u\big) &\leq
\check{\varepsilon}(M\tau)\asymp\check{\varepsilon}(0) = \smallo(1).
\end{align}
Therefore it is enough to show that
\begin{equation}
\begin{aligned}
\xExp\big[\delta\hat{T}^\circlearrowleft_u(s)\indicator{\delta\hat{T}^\circlearrowleft_u(s)\leq C+1}\,\big|\,\rv{\xi}=\xi,\,
X(s)=u\big] &= 1+\smallo(1) \;, \\
C\xPr\big(\delta\hat{T}^\circlearrowleft_u(s)>C+1\,\big|\,\rv{\xi}=\xi,\,X(s)=u\big) &= \smallo(1),
\end{aligned}
\end{equation}
uniformly in $s\in[0,M\tau]$ and in $\xi$ belonging to a set $\Xi_{M\tau}$ satisfying $\xPr(\rv{\xi}\in\Xi_{M\tau})=1-\smallo(1)$.
Note that both these statements concern events that depend on no more than $N\isdef C+1$ ticks of the Poisson clock starting from $s$. Thus, applying Proposition~\ref{prop:time-homogeneous-inhomogeneous:comparison:short-term}, we get
\begin{equation}
\label{eq:time-inhomogeneous:short-term-regularity:I-II:proof}
\begin{aligned}
\xExp\big[\delta\hat{T}^\circlearrowleft_u(s)\indicator{\delta\hat{T}^\circlearrowleft_u(s)\leq C+1}\,\big|\,\rv{\xi}=\xi,\,
X(s)=u\big] &\leq D_N \times \xExp^{(s)}_u\big[\hat{T}^\circlearrowleft_u\indicator{\hat{T}^\circlearrowleft_u\leq C+1}\big] \;, \\
C\xPr\big(\delta\hat{T}^\circlearrowleft_u(s)>C+1\,\big|\,\rv{\xi}=\xi,\,X(s)=u\big) &\leq
D_N \times C\xPr^{(s)}_u\big(\hat{T}^\circlearrowleft_u>C+1\big),
\end{aligned}
\end{equation}
uniformly in $s\in[0,M\tau]$ and in $\xi$ satisfying $\xi([s,s+\lambda/N])\geq N$.

Next, note that
\begin{align}
\label{eq:time-inhomogeneous:short-term-regularity:I-II:proof:N^2}
N^2 &= (C+1)^2 \asymp A_1^2(\check{\Gamma}^{(0)})^2(\log\gamma(0))^2 \;.
\end{align}
Let us first argue that, if $A_1$ is chosen to grow sufficiently slowly, then the right-hand side of~\eqref{eq:time-inhomogeneous:short-term-regularity:I-II:proof:N^2} is $\prec\Gamma^{(0)}$, which in turn is $\preceq\gamma(0)\lambda$.  To see the first inequality, note that $\Gamma^{(0)}$ and $\check{\Gamma}^{(0)}$ are rational function of $\lambda_U\asymp\lambda^{\beta_U}$ and $\lambda_V\asymp\lambda^{\beta_V}$ (see the proof of Lemma~\ref{lem:no-deep-well}).  Since $(\check{\Gamma}^{(0)})^2$ is assumed to be of smaller order than $\Gamma^{(0)}$, it follows that $(\check{\Gamma}^{(0)})^2(\log\gamma(0))^2\asymp(\check{\Gamma}^{(0)})^2(\log\lambda)^2$ is of smaller order than $\Gamma^{(0)}$ as well.  To see the second inequality, note that $\Gamma^{(0)}\asymp 1/\check{\varepsilon}(0)=\gamma(0)/\nu(0)\asymp \gamma(0)M\preceq\gamma(0)\lambda$.
Thus, applying Lemma~\ref{lem:time-inhomogeneous:poisson:good-realizations}, we find that the equalities in~\eqref{eq:time-inhomogeneous:short-term-regularity:I-II:proof} hold uniformly in $s\in[0,M\tau]$ and in $\xi$ in a set $\Xi_{M\tau}$ satisfying $\xPr(\rv{\xi}\in\Xi_{M\tau})=1-\smallo(1)$ as $\lambda\to\infty$.

To prove the claim, it remains to show that
\begin{align}
\xExp^{(s)}_u\big[\hat{T}^\circlearrowleft_u\indicator{\hat{T}^\circlearrowleft_u\leq C+1}\big] &= 1+\smallo(1)\;, &
C\xPr^{(s)}_u\big(\hat{T}^\circlearrowleft_u>C+1\big) &= \smallo(1),
\end{align}
uniformly in $s\in[0,M\tau]$. But these follow from Proposition~\ref{prop:task:truncated-mean-and-conditional-return:time-homogeneous:proof}\ref{item:tasks:time-homogeneous:paper-1:conditional-return} and the fact that $\xPr^{(s)}_u\big(\hat{T}^\circlearrowleft<\hat{T}_v\big)=1-\check{\varepsilon}(s)=1-\smallo(1)$ uniformly in $s\in[0,M\tau]$.
%
\end{proof}

A similar approach establishes~\ref{item:task:conditional-u} and~\ref{item:task:conditional-v}.  In this case, we cannot directly apply Proposition~\ref{prop:task:conditional-u/v:time-homogeneous:proof}.  Instead, we need to redo the proof of Proposition~B.9 of~\cite{HolNarTaa16}.

\begin{proposition}
\label{prop:time-inhomogeneous:short-term-regularity:III}
Let $C$ be as in~\eqref{eq:parameters:explicit-choices:time-inhomogeneous}.
Subject to Condition~$(\ast)$, Condition~\ref{item:task:conditional-u} is satisfied. 
\end{proposition}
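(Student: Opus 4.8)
The plan is to re-run the argument behind Proposition~B.9 of~\cite{HolNarTaa16}, but now for the time-inhomogeneous chain, importing the single-block exit estimate from the frozen chains $\xPr^{(\cdot)}$ through the short-term comparison of Proposition~\ref{prop:time-homogeneous-inhomogeneous:comparison:short-term}. Fix $x\notin\{u,v\}$, a clock realization $\xi$, and $s\in\bar\xi\cap[0,M\tau]$; write $A\isdef\pspace{X}\setminus\{u,v\}$ and let $\delta\hat{T}_{A^\complement}(s)$ count the ticks of $\rv\xi$ from $s$ until the chain first lands in $\{u,v\}$. First I would use Bayes' rule,
\begin{equation*}
\xPr\big(\delta\hat{T}_u(s)>C \,\big|\, \rv\xi=\xi,X(s)=x,\ \delta\hat{T}_u(s)<\delta\hat{T}_v(s)\big)
= \frac{\xPr\big(\delta\hat{T}_u(s)>C,\ \delta\hat{T}_u(s)<\delta\hat{T}_v(s)\,\big|\, \rv\xi=\xi,X(s)=x\big)}{\xPr\big(\delta\hat{T}_u(s)<\delta\hat{T}_v(s)\,\big|\, \rv\xi=\xi,X(s)=x\big)},
\end{equation*}
and bound the numerator by $\xPr\big(\delta\hat{T}_{A^\complement}(s)>C\,\big|\,\rv\xi=\xi,X(s)=x\big)$, since on the numerator's event $\delta\hat{T}_v(s)>\delta\hat{T}_u(s)>C$.

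For the numerator I would prove a block estimate. Set $N\isdef\lceil c_0\check\Gamma^{(0)}\rceil$ for a constant $c_0$ to be chosen large. From the time-homogeneous bound $\sup_{y\in A}\xExp^{(s')}_y[\delta\hat{T}_{A^\complement}]\preceq\check\Gamma^{(s')}$ (the inequality $\sup_{x\in A}\xExp_x[\hat{T}_{A^\complement}]\preceq\Gamma(A)$ recalled in the excerpt, with $\Gamma(A)=\check\Gamma$), together with Proposition~\ref{prop:critical-regime:orders-of-magnitude} ($\check\Gamma^{(s')}\asymp\check\Gamma^{(0)}$) and Markov's inequality, I get $\sup_{y\in A}\xPr^{(s')}_y(\delta\hat{T}_{A^\complement}>N)\le \bar c/c_0+\smallo(1)$ for an absolute constant $\bar c$, uniformly over $s'$ in the relevant range. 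As this event depends on only $N$ ticks, Proposition~\ref{prop:time-homogeneous-inhomogeneous:comparison:short-term} upgrades it to
\begin{equation*}
\sup_{y\in A}\xPr\big(\delta\hat{T}_{A^\complement}(s')>N\,\big|\,\rv\xi=\xi,\ X(s')=y\big)\le e^{B}\big(\bar c/c_0+\smallo(1)\big),
\end{equation*}
valid for any block-start $s'$ for which $\xi$ puts at least $N$ ticks in $[s',s'+\lambda/N]$ (and trivially, with $D_N=1$, for $s'>M\tau$ where the parameters are frozen); fixing $c_0$ large makes the right-hand side $\le\tfrac12$. Because $N^2\asymp(\check\Gamma^{(0)})^2\prec\Gamma^{(0)}\preceq\gamma(0)\lambda$ — the first $\prec$ is Assumption~\ref{keyassumpextra}, and $\Gamma^{(0)}\asymp M\gamma(0)\preceq\lambda\gamma(0)$ in the regime $(\ast)$ — Lemma~\ref{lem:time-inhomogeneous:poisson:good-realizations} (whose proof carries over verbatim to $[0,2M\tau]$ since $\gamma\asymp\gamma(0)$ throughout) supplies a set $\Xi_{M\tau}$ with $\xPr(\rv\xi\in\Xi_{M\tau})=1-\smallo(1)$ on which every length-$\lambda/N$ window in $[0,2M\tau]$ carries $\ge N$ ticks; after intersecting $\Xi_{M\tau}$ with the high-probability event that the first $C$ ticks after $s$ span clock-time $\le M\tau$ (available since $C/\gamma(0)\prec M$ when $A_1$ grows slowly), the one-block bound applies to each of the first $\lfloor C/N\rfloor$ blocks. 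Iterating along them via the $\rv\xi$-conditional Markov property yields, for $\xi\in\Xi_{M\tau}$, the bound $\xPr\big(\delta\hat{T}_{A^\complement}(s)>C\,\big|\,\rv\xi=\xi,X(s)=x\big)\le 2^{-\lfloor C/N\rfloor}$.

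For the denominator I would use the standard path bound: from any $x\notin\{u,v\}$ there is a path in $\pspace{X}\setminus\{v\}$ ending at $u$ of length at most a graph-dependent constant $D$ — delete the particles of $x$ one at a time down to $\varnothing$ (possible without ever making $V$ full, since $x\neq v$), then add the vertices of $U$ one at a time — and each of its steps has transition probability $\succeq 1/\gamma(0)$, because in the regime $(\ast)$ all nonzero entries of $K^{(r)}$ are $\succeq 1/\gamma(0)$ uniformly over the $D$ ticks involved, which span $\smallo(\lambda)$ of clock-time (Proposition~\ref{prop:critical-regime:orders-of-magnitude}). Hence $\xPr\big(\delta\hat{T}_u(s)<\delta\hat{T}_v(s)\,\big|\,\rv\xi=\xi,X(s)=x\big)\succeq\gamma(0)^{-D}$, uniformly in $x$, $\xi$, $s$. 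Combining the two bounds gives $\xPr\big(\delta\hat{T}_u(s)>C\,\big|\,\rv\xi=\xi,X(s)=x,\delta\hat{T}_u(s)<\delta\hat{T}_v(s)\big)\preceq\gamma(0)^D\,2^{-\lfloor C/N\rfloor}$, and since $C=A_1\check\Gamma^{(0)}\log\gamma(0)$ as in~\eqref{eq:parameters:explicit-choices:time-inhomogeneous} with $A_1\to\infty$ forces $\lfloor C/N\rfloor\asymp(A_1/c_0)\log\gamma(0)\succ D\log\gamma(0)$, the right-hand side is $\smallo(1)$ uniformly in $x\notin\{u,v\}$, $s\in\bar\xi\cap[0,M\tau]$ and $\xi\in\Xi_{M\tau}$, which is condition~\ref{item:task:conditional-u}.

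I expect the main obstacle to be the \emph{uniform-in-time} one-block exit estimate: the block length $N\asymp\check\Gamma^{(0)}$ must be short enough for Proposition~\ref{prop:time-homogeneous-inhomogeneous:comparison:short-term} to apply across one block and, simultaneously, short enough that the Poisson-clock control of Lemma~\ref{lem:time-inhomogeneous:poisson:good-realizations} is available — which is precisely where Assumption~\ref{keyassumpextra} enters, through $N^2\prec\gamma(0)\lambda$. (Condition~\ref{item:task:conditional-v} would be handled the same way, conditioning first on the state after one step and then running the exit-time argument with the roles of $u$ and $v$ interchanged.)
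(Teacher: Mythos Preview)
Your argument is correct and reaches the same conclusion, but it takes a genuinely different route from the paper in two places.

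For the numerator, the paper applies Proposition~\ref{prop:time-homogeneous-inhomogeneous:comparison:short-term} once with block length $N=C$, obtaining
\[
\xPr\big(\delta\hat{T}_{\{u,v\}}(s)>C\,\big|\,\rv\xi=\xi,X(s)=x\big)\leq D_C\,\xPr^{(s)}_x\big(\hat{T}_{\{u,v\}}>C\big),
\]
and then invokes the exponential tail bound of~\cite[Proposition~B.8]{HolNarTaa16} directly on the frozen chain to get the factor $\alpha^{C/\check\Gamma^{(s)}}$. You instead choose a much smaller block $N\asymp\check\Gamma^{(0)}$, use only the mean hitting-time bound $\sup_{y\in A}\xExp^{(s')}_y[\hat T_{A^\complement}]\preceq\check\Gamma^{(s')}$ together with Markov's inequality to get a single-block exit probability $\leq\tfrac12$, and then iterate over $\lfloor C/N\rfloor$ blocks. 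This is more self-contained (no appeal to Proposition~B.8), at the cost of having to control the clock uniformly over all block starts; your handling of that via the extension of Lemma~\ref{lem:time-inhomogeneous:poisson:good-realizations} to $[0,2M\tau]$ is fine, though the sentence ``intersecting $\Xi_{M\tau}$ with the high-probability event that the first $C$ ticks after $s$ span clock-time $\leq M\tau$'' should be phrased as a single event uniform in $s$ (e.g.\ that every interval of length $M\tau$ inside $[0,2M\tau]$ carries at least $C$ ticks), which follows by the same Chebyshev-plus-partition argument.

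For the denominator, the paper reduces to the frozen chain via monotonicity and then uses the path bound there; you apply the path bound directly in the inhomogeneous chain, observing that thanks to the freezing assumption~\eqref{eq:assumption:freezing-at-the-end} and Proposition~\ref{prop:critical-regime:orders-of-magnitude}, every nonzero entry of $K^{(r)}$ is $\succeq 1/\gamma(0)$ uniformly in $r\geq 0$. Your approach here is cleaner and sidesteps any subtlety about the direction of the monotone comparison for this particular event. Both routes rely on Assumption~\ref{keyassumpextra} in exactly the same way, through the check $N^2$ (respectively $C^2$) $\prec\gamma(0)\lambda$.
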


\begin{proof}
Following the proof of~\cite[Proposition~B.9]{HolNarTaa16}, we can write
\begin{align}
\MoveEqLeft[3]\nonumber
\circled{\sun} \isdef \sup_{x\notin\{u,v\}}\xPr\big(\delta\hat{T}_u(s)>C\,\big|\,\rv{\xi}=\xi,\,X(s)=x,\,
\delta\hat{T}_u(s)<\delta\hat{T}_v(s)\big) \\
&= \label{eq:time-inhomogeneous:short-term-regularity:III:proof:a}
\sup_{x\notin\{u,v\}}\frac{\xPr\big(\delta\hat{T}_v(s)>\delta\hat{T}_u(s)>C\,\big|\,\rv{\xi}=\xi, X(s)=x\big)}{\xPr\big(\delta\hat{T}_v(s)>\delta\hat{T}_u(s)\,\big|\,\rv{\xi}=\xi, X(s)=x\big)} \\
&\leq \sup_{x\notin\{u,v\}}\frac{\xPr\big(\delta\hat{T}_{\{u,v\}}(s)>C\,\big|\,\rv{\xi}=\xi, X(s)=x\big)}{\xPr\big(\delta\hat{T}_v(s)>\hat{T}_u(s)\,\big|\,\rv{\xi}=\xi, X(s)=x\big)}. \nonumber
\end{align}
Applying Proposition~\ref{prop:time-homogeneous-inhomogeneous:comparison:short-term}, we see that the numerator in the right-hand side of~\eqref{eq:time-inhomogeneous:short-term-regularity:III:proof:a} is bounded from above by $D_C\xPr^{(s)}_x(\hat{T}_{\{u,v\}}>C)$ uniformly in $\xi$ and $s\in[0,M\tau]$ satisfying $\xi([s,s+\lambda/C])\geq C$.  Lemma~\ref{lem:time-inhomogeneous:poisson:good-realizations} together with the assumption $\check{\Gamma}^{(0)}\prec\sqrt{\Gamma^{(0)}}$ implies that for $\xi$ in a set $\Xi_{M\tau}$ satisfying $\xPr(\rv{\xi}\in\Xi_{M\tau})=1-\smallo(1)$, the inequality $\xi([s,s+\lambda/C])\geq C$ holds for every $s\in[0,M\tau]$ (see the proof of Proposition~\ref{prop:time-inhomogeneous:short-term-regularity:I-II}).  The denominator in the right-hand side of~\eqref{eq:time-inhomogeneous:short-term-regularity:III:proof:a} is by monotonicity bounded from below by $\xPr^{(s)}_x(\hat{T}_v>\hat{T}_u)$. Hence
\begin{align}
\circled{\sun} &\leq \sup_{x\notin\{u,v\}} \frac{D_C\xPr^{(s)}_x(\hat{T}_{\{u,v\}}>C)}{\xPr^{(s)}_x(\hat{T}_v>\hat{T}_u)}
\end{align}
uniformly in $\xi\in\Xi_{M\tau}$ and $s\in[0,M\tau]$.  By~\cite[Proposition~B.8]{HolNarTaa16}, $\xPr^{(s)}_x(\hat{T}_{\{u,v\}}>C)$ is bounded from above by $\alpha^\rho$ where $\alpha<1$ is a constant and $\rho\isdef C/\check{\Gamma}^{(s)}=A_1\log\gamma(s)$.  Let $\kappa(s)\isdef\min\{K^{(s)}(a,b): a,b\in\pspace{X}, K(a,b)>0\} = 1/\gamma(s)$.  Let $w$ be a simple path from $x$ to $u$ that does not pass through $v$.  The length of $w$ is no larger than $\abs{\pspace{X}}-2$.  Therefore $\xPr^{(s)}_x(\hat{T}_v>\hat{T}_u)\geq\xPr^{(s)}_x(\text{$\hat{X}$ follows $w$})\geq\kappa(s)^{\abs{\pspace{X}}-2}$. Hence
\begin{align}
\circled{\sun} &\leq D_C \alpha^{A_1\log\gamma(s)}\gamma(s)^{\abs{\pspace{X}}-2} =
D_C\, \ee^{A_1\log\gamma(s)\log\alpha + (\abs{\pspace{X}}-2)\log\gamma(s)}
\end{align}
uniformly in $\xi\in\Xi_{M\tau}$ and $s\in[0,M\tau]$.  Now recall from Proposition~\ref{prop:critical-regime:orders-of-magnitude} that $\gamma(0)\asymp\gamma(s)$ for every $0\leq s\leq M\tau$.  Since $A_1\succ 1$, it follows that $\circled{\sun}=\smallo(1)$ uniformly in $\xi\in\Xi_{M\tau}$ and $s\in\bar{\xi}\cap[0,M\tau]$.
\end{proof}

\begin{proposition}
\label{prop:time-inhomogeneous:short-term-regularity:IV}
Let $k$ and $\delta s$ be as in~\eqref{eq:parameters:explicit-choices:time-inhomogeneous}.
Subject to $(\ast)$, Condition~\ref{item:task:conditional-v} is satisfied. 
\end{proposition}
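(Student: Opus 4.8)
The plan is to transcribe the argument used for condition~\ref{item:task:conditional-u} in Proposition~\ref{prop:time-inhomogeneous:short-term-regularity:III}, replacing the hitting time of $u$ by that of $v$. Since condition~\ref{item:task:conditional-v} already restricts attention to clock realizations $\xi$ with $\xi\big((s,s+\delta s)\big)\geq k$, no auxiliary high-probability set is needed here. Abbreviate
\begin{equation*}
\begin{aligned}
\circled{\sun} &\isdef \xPr\big(\delta\hat{T}_v(s)>k\,\big|\,\rv{\xi}=\xi,\,X(s)=u,\,\delta\hat{T}_v(s)<\delta\hat{T}^\circlearrowleft_u(s)\big) \\
&= \frac{\xPr\big(k<\delta\hat{T}_v(s)<\delta\hat{T}^\circlearrowleft_u(s)\,\big|\,\rv{\xi}=\xi,X(s)=u\big)}{\xPr\big(\delta\hat{T}_v(s)<\delta\hat{T}^\circlearrowleft_u(s)\,\big|\,\rv{\xi}=\xi,X(s)=u\big)} .
\end{aligned}
\end{equation*}
The numerator is at most the probability that, starting from $u$, the chain avoids $\{u,v\}$ at the ticks $1,\ldots,k$ of $\rv{\xi}$ after $s$ (an event measurable with respect to those first $k$ ticks). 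The denominator is bounded below by the probability of following one fixed simple path $w$ from $u$ to $v$ that never revisits $u$ --- for instance, deactivate the vertices of $U$ one at a time and then activate the vertices of $V$ one at a time --- so $\abs{w}\leq\abs{\pspace{X}}-1$.

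For the numerator I would invoke Proposition~\ref{prop:time-homogeneous-inhomogeneous:comparison:short-term} with $N\isdef k$ and with $E$ the event ``$\hat{X}(i)\notin\{u,v\}$ for $1\leq i\leq k$'', which bounds the numerator by $D_k\,\xPr^{(s)}_u\big(\hat{X}(i)\notin\{u,v\},\,1\leq i\leq k\big)$, and then, conditioning on the first step, by $D_k\sup_{x\notin\{u,v\}}\xPr^{(s)}_x\big(\hat{T}_{\{u,v\}}>k-1\big)$, uniformly over $\xi$ with $\xi\big([s,s+\lambda/k]\big)\geq k$. By~\cite[Proposition~B.8]{HolNarTaa16} applied with $A\isdef\pspace{X}\setminus\{u,v\}$, the last supremum is at most $\alpha^{(k-1)/\check{\Gamma}^{(s)}}$ for a graph-dependent constant $\alpha<1$, and since $k=A_1\check{\Gamma}^{(0)}\log\gamma(0)$ with $\check{\Gamma}^{(s)}\asymp\check{\Gamma}^{(0)}$ and $\gamma(s)\asymp\gamma(0)$ by Proposition~\ref{prop:critical-regime:orders-of-magnitude}, the exponent $(k-1)/\check{\Gamma}^{(s)}$ is $\asymp A_1\log\gamma(0)$. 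For the denominator, conditionally on $\rv{\xi}=\xi$ the chain jumps at the tick times $r$ via $K^{(r)}$, every positive entry of which is at least $1/\gamma(r)\asymp 1/\gamma(0)$ (again Proposition~\ref{prop:critical-regime:orders-of-magnitude}); because $k\to\infty$, for an allowed $\xi$ the first $\abs{w}$ ticks after $s$ lie in $(s,s+\delta s)$, so following $w$ there has probability $\succeq\gamma(0)^{-(\abs{\pspace{X}}-1)}$, and this event forces $\delta\hat{T}_v(s)<\delta\hat{T}^\circlearrowleft_u(s)$. Hence the denominator is $\succeq\gamma(0)^{-(\abs{\pspace{X}}-1)}$.

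Combining the two estimates gives $\circled{\sun}\preceq D_k\,\alpha^{(k-1)/\check{\Gamma}^{(s)}}\gamma(0)^{\abs{\pspace{X}}-1}=\ee^{\bigo(1)}\,\gamma(0)^{-cA_1+\bigo(1)}$ for a constant $c>0$ (essentially $c=-\log\alpha$), which is $\smallo(1)$ as $\lambda\to\infty$ since $A_1\to\infty$ and $\gamma(0)\to\infty$, uniformly in $s$ and in $\xi$ satisfying $\xi\big((s,s+\delta s)\big)\geq k$; this is precisely condition~\ref{item:task:conditional-v}. The one substantive point to verify is that every realization allowed by condition~\ref{item:task:conditional-v} meets the hypothesis $\xi\big([s,s+\lambda/k]\big)\geq k$ of Proposition~\ref{prop:time-homogeneous-inhomogeneous:comparison:short-term}, which holds provided $\delta s\leq\lambda/k$, i.e.\ $A_1^2A_2(\check{\Gamma}^{(0)})^2(\log\gamma(0))^2\preceq\gamma(0)\lambda$. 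This is exactly where Assumption~\ref{keyassumpextra} enters, just as in the proof of Proposition~\ref{prop:time-inhomogeneous:short-term-regularity:III}: since $\Gamma^{(0)}$ and $\check{\Gamma}^{(0)}$ are rational functions of $\lambda_U\asymp\lambda^{\beta_U}$ and $\lambda_V\asymp\lambda^{\beta_V}$, the hypothesis $\check{\Gamma}^{(0)}\prec\sqrt{\Gamma^{(0)}}$ upgrades to $(\check{\Gamma}^{(0)})^2(\log\gamma(0))^2\prec\Gamma^{(0)}$, while $\Gamma^{(0)}\asymp 1/\check{\varepsilon}(0)=\gamma(0)/\check{\nu}(0)\asymp\gamma(0)M\preceq\gamma(0)\lambda$, so the required inequality holds once $A_1,A_2$ grow slowly enough, as already arranged in~\eqref{eq:parameters:explicit-choices:time-inhomogeneous}. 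I expect this matching of the clock-density requirement of Proposition~\ref{prop:time-homogeneous-inhomogeneous:comparison:short-term} against the prescribed length $\delta s$ to be the only mildly delicate step; the remainder is a routine copy of the treatment of condition~\ref{item:task:conditional-u}.
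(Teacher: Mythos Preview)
Your argument is correct and follows the same strategy as the paper's proof: write the conditional probability as a ratio, compare the numerator to the time-homogeneous chain over a short window, bound the denominator via a fixed path, and finish with \cite[Proposition~B.8]{HolNarTaa16}. There are only two cosmetic differences. First, for the numerator the paper applies Lemmas~\ref{lem:markov-chain:short-term-comparison} and~\ref{lem:transition-probabilities:comparison} directly with the first $k$ ticks confined to $(s,s+\delta s)$, obtaining the comparison factor $(1+\delta s/\lambda)^k$ and then checking $(\delta s/\lambda)k=\bigo(1)$; you instead route through Proposition~\ref{prop:time-homogeneous-inhomogeneous:comparison:short-term} with $N=k$ and verify $\delta s\leq\lambda/k$ --- the same inequality $\delta s\cdot k\preceq\lambda$, just packaged differently. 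Second, for the denominator the paper first passes to the time-homogeneous chain via monotonicity and~\eqref{eq:assumption:freezing-at-the-end}, obtaining $\xPr^{(M\tau)}_u(\hat{T}^\circlearrowleft_u>\hat{T}_v)\asymp\xPr^{(s)}_u(\hat{T}^\circlearrowleft_u>\hat{T}_v)$, and only later applies the path bound; you apply the path bound directly in the inhomogeneous chain. Both routes yield the same $\gamma(0)^{-\bigo(1)}$ lower bound, so the final estimate is identical.
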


\begin{proof}
As in the proof of Proposition~\ref{prop:time-inhomogeneous:short-term-regularity:III} above, we start with observing that
\begin{align}
\MoveEqLeft[3]\nonumber
\circled{\leftmoon} \isdef \xPr\big(\delta\hat{T}_v(s)>k\,\big|\,\rv{\xi}=\xi,\,X(s)=u,\, \delta\hat{T}_v(s)<\delta\hat{T}^\circlearrowleft_u(s)\big) \\
&= \label{eq:time-inhomogeneous:short-term-regularity:IV:proof:a}
\frac{\xPr\big(\delta\hat{T}^\circlearrowleft_u(s)>\delta\hat{T}_v(s)>k\,\big|\,\rv{\xi}=\xi, X(s)=u\big)}{
\xPr\big(\delta\hat{T}^\circlearrowleft_u(s)>\delta\hat{T}_v(s)\,\big|\,\rv{\xi}=\xi, X(s)=u\big)} \\
&\leq \frac{\xPr\big(\delta\hat{T}_{\{u,v\}}(s)>k\,\big|\,\rv{\xi}=\xi, X(s)=u\big)}{
\xPr\big(\delta\hat{T}^\circlearrowleft_u(s)>\delta\hat{T}_v(s)\,\big|\,\rv{\xi}=\xi, X(s)=u\big)}. \nonumber
\end{align}
By monotonicity and assumption~\eqref{eq:assumption:freezing-at-the-end}, the denominator can be bounded from below by $\xPr^{(M\tau)}_u(\hat{T}^\circlearrowleft_u>\hat{T}_v)$.  Applying Lemma~\ref{lem:markov-chain:short-term-comparison} and Lemma~\ref{lem:transition-probabilities:comparison}, we see that, for $s$ satisfying $\xi((s,s+\delta s))\geq k$, the numerator of the right-hand side of~\eqref{eq:time-inhomogeneous:short-term-regularity:IV:proof:a} is bounded from above by $\big(1+\frac{\delta s}{\lambda}\big)^k\xPr^{(s)}_u(\hat{T}^\circlearrowleft_{\{u,v\}}>k)$.  By the choice of $k$ and $\delta s$, and the assumption $\check{\Gamma}^{(0)}\prec\sqrt{\Gamma^{(0)}}$, we have
\begin{align}
\frac{\delta s}{\lambda}k &= \frac{A_2 A_1^2 (\check{\Gamma}^{(0)})^2(\log\gamma(0))^2}{\gamma(0)\lambda}
= \bigo(1)
\end{align}
as long as $A_1$ and $A_2$ grow sufficiently slowly (see the proof of Proposition~\ref{prop:time-inhomogeneous:short-term-regularity:I-II}).  Therefore,
\begin{align}
\label{eq:time-inhomogeneous:short-term-regularity:IV:proof:b}
\circled{\leftmoon} &\leq \bigo(1)\frac{\xPr^{(s)}_u(\hat{T}^\circlearrowleft_{\{u,v\}}>k)}{\xPr^{(s)}_u(\hat{T}^\circlearrowleft_u>\hat{T}_v)}
\end{align}
uniformly in $\xi$ and $s\in[0,M\tau]$ satisfying $\xi((s,s+\delta s))\geq k$, where we have used the fact that $\xPr^{(M\tau)}_u(\hat{T}^\circlearrowleft_u>\hat{T}_v)\asymp\xPr^{(s)}_u(\hat{T}^\circlearrowleft_u>\hat{T}_v)$ by Proposition~\ref{prop:critical-regime:orders-of-magnitude}.  To bound the right-hand side of~\eqref{eq:time-inhomogeneous:short-term-regularity:IV:proof:b}, we first condition on the first step of the Markov chain in the numerator and then proceed as in the proof of Proposition~\ref{prop:time-inhomogeneous:short-term-regularity:III}.  We find that $\circled{\leftmoon}=\smallo(1)$ uniformly in $\xi$ and $s\in[0,M\tau]$ satisfying $\xi((s,s+\delta s))\geq k$.
\end{proof}

\begin{proposition}
\label{prop:time-inhomogeneous:short-term-regularity:V}
Subject to $(\ast)$, Condition~\ref{item:task:last-excursion} is satisfied.
\end{proposition}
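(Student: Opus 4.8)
The plan is to follow the proof of the time-homogeneous analogue (Proposition~\ref{prop:task:last-excursion:time-homogeneous:proof}), replacing the weak-convergence input used there by the two-sided bounds on $\xPr_u(S>M\tau)$ that are already at our disposal. Conditions~\ref{item:task:truncated-mean}--\ref{item:task:conditional-v} were verified in Propositions~\ref{prop:time-inhomogeneous:short-term-regularity:I-II}--\ref{prop:time-inhomogeneous:short-term-regularity:IV}, and the parameter conditions~\ref{item:task:parameter:C}--\ref{item:task:parameter:E-vs-m} hold for the choice~\eqref{eq:parameters:explicit-choices:time-inhomogeneous}; none of these uses~\ref{item:task:last-excursion}. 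Following the proof of Proposition~\ref{prop:remaining-tasks}, whose bounds on $\xPr_u(S>M\tau)$ (as opposed to the subsequent passage to $T_v$) invoke only Propositions~\ref{prop:abstract:lower-bound}, \ref{prop:lower-bound:typical}, \ref{prop:abstract:upper-bound}, \ref{prop:upper-bound:simplified} and~\ref{item:task:truncated-mean}--\ref{item:task:conditional-v}, one gets, writing $\Phi(t)\isdef\int_0^t\check{\varepsilon}(s)\gamma(s)\,\dd s$,
\[
[1-\smallo(1)]\,\ee^{-\Phi(M\tau)}\;\leq\;\xPr_u(S>M\tau)\;\leq\;[1+\smallo(1)]\,\ee^{-\Phi(M\tau)}+\smallo(1),\qquad\lambda\to\infty,
\]
where $\Phi(M\tau)\asymp\tau=\bigo(1)$ by Proposition~\ref{prop:critical-regime:orders-of-magnitude} and $M\check{\nu}(0)\asymp 1$. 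Since $\{T_v>M\tau\}$ is the disjoint union of $\{S>M\tau\}$ and $\{S\le M\tau<T_v\}$, it suffices to prove $\xPr_u(S\le M\tau<T_v)=\smallo(1)$ for every $\tau$ as in $(\ast)$.

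Fix $\tau'\in(0,\tau)$ and set $\epsilon\isdef\tau-\tau'>0$, so that $\{S\le M\tau<T_v\}\subseteq\{M\tau'<S\le M\tau\}\cup\{S\le M\tau',\ \delta T(S)>M\epsilon\}$. For the first event, the two-sided bound above gives $\xPr_u(M\tau'<S\le M\tau)=\xPr_u(S>M\tau')-\xPr_u(S>M\tau)\le \ee^{-\Phi(M\tau')}-\ee^{-\Phi(M\tau)}+\smallo(1)\le \Phi(M\tau)-\Phi(M\tau')+\smallo(1)$, and $\Phi(M\tau)-\Phi(M\tau')=\int_{M\tau'}^{M\tau}\check{\nu}(s)\,\dd s\asymp\epsilon$ because $\check{\nu}(s)\asymp\check{\nu}(0)\asymp 1/M$ on $[0,M\tau]$ by Proposition~\ref{prop:critical-regime:orders-of-magnitude}. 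Hence $\xPr_u(M\tau'<S\le M\tau)\le c\,\epsilon+\smallo(1)$ for an absolute constant $c$.

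For the second event I would show that $\delta T(S)$ is short with high probability, the point being that a \emph{successful} excursion lives on the scale $\check{\Gamma}^{(0)}\log\gamma(0)$, not $\Gamma^{(0)}$. With $C=A_1\check{\Gamma}^{(0)}\log\gamma(0)$ as in~\eqref{eq:parameters:explicit-choices:time-inhomogeneous}, I claim $\xPr_u(S\le M\tau',\ \delta\hat{T}_v(S)>C)=\smallo(1)$. On $\{S\le M\tau',\ \delta\hat{T}_v(S)>C\}$ the point $S\in\bar{\rv{\xi}}\cap[0,M\tau']$ satisfies $X(S)=u$, $B(S)=1$, $\delta\hat{T}_v(S)>C$; bounding by $\sum_{s\in\bar{\rv{\xi}}\cap[0,M\tau']}\indicator{X(s)=u,\,B(s)=1,\,\delta\hat{T}_v(s)>C}$, taking expectations via the Campbell--Mecke formula (as in the proof of Lemma~\ref{lem:lower-bound:poisson}), and using the conditional independence of trials given $\rv{\xi}$, the inclusion $\{B(s)=1,\ \delta\hat{T}_v(s)>C\}\subseteq\{\delta\hat{T}^\circlearrowleft_{\{u,v\}}(s)>C\}$, and the short-term comparison of Proposition~\ref{prop:time-homogeneous-inhomogeneous:comparison:short-term} (licensed over windows of $C$ ticks because $C^2\prec\Gamma^{(0)}\preceq\gamma(0)\lambda$ by Assumption~\ref{keyassumpextra}, exactly as in the proof of Proposition~\ref{prop:time-inhomogeneous:short-term-regularity:I-II}), one bounds this, on the high-probability clock event of Lemma~\ref{lem:time-inhomogeneous:poisson:good-realizations}, by
\[
\bigo\big(\gamma(0)M\tau'\big)\cdot D_C\,\sup_{s\le M\tau}\xPr^{(s)}_u\big(\hat{T}^\circlearrowleft_{\{u,v\}}>C\big)\;\preceq\;\Gamma^{(0)}\,\alpha^{(C-1)/\check{\Gamma}^{(0)}}\;=\;\smallo(1),
\]
for the constant $\alpha<1$ of~\cite[Proposition~B.8]{HolNarTaa16} and $A_1\to\infty$ slowly (here we use $\check{\varepsilon}(s)\asymp 1/\Gamma^{(s)}$, Lemma~\ref{lem:no-deep-well}, and that $\Gamma^{(0)}$ is a fixed power of $\lambda$). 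On the complementary event $\{\delta\hat{T}_v(S)\le C\}$ one has $\delta T(S)\le\sigma_{\rv{\xi}}(S,C)-S$, so $\{S\le M\tau',\ \delta\hat{T}_v(S)\le C,\ \delta T(S)>M\epsilon\}\subseteq\{\exists\,s\in[0,M\tau']:\rv{\xi}((s,s+M\epsilon])<C\}$, and a Chebyshev-plus-partition estimate identical in form to the proof of Lemma~\ref{lem:time-inhomogeneous:poisson:good-realizations} bounds this by $\bigo\big(\tau/(\epsilon^2\Gamma^{(0)})\big)=\smallo(1)$, the mean number of ticks in a window of length $M\epsilon/2$ being $\asymp\epsilon\Gamma^{(0)}\gg C$. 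Combining, $\xPr_u(S\le M\tau',\ \delta T(S)>M\epsilon)=\smallo(1)$.

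Putting the two pieces together, $\limsup_{\lambda\to\infty}\xPr_u(S\le M\tau<T_v)\le c\,\epsilon$; since $\epsilon\in(0,\tau)$ was arbitrary, $\xPr_u(S\le M\tau<T_v)=\smallo(1)$ for every $\tau$ as in $(\ast)$, which is stronger than condition~\ref{item:task:last-excursion}. The main obstacle is the last display: the comparison with the time-homogeneous process is valid only over $\bigo(C)$ ticks, so one must exploit that a successful excursion is short at the scale $\check{\Gamma}^{(0)}\log\gamma(0)$ rather than $\Gamma^{(0)}$ — which is exactly where the no-deep-well bound $\Gamma^{(0)}\succ\check{\Gamma}^{(0)}\log\gamma(0)$ (Lemma~\ref{lem:no-deep-well}) and Assumption~\ref{keyassumpextra}, $\check{\Gamma}^{(0)}\prec\sqrt{\Gamma^{(0)}}$, are used.
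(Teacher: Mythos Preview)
Your proof is correct and in fact establishes a stronger conclusion than condition~\ref{item:task:last-excursion}: you obtain $\xPr_u(S\le M\tau<T_v)=\smallo(1)$ for \emph{every} admissible $\tau$, without passing to subsequences or excluding countably many values. The logical structure is sound: the two-sided bounds on $\xPr_u(S>M\tau)$ that you extract from the proof of Proposition~\ref{prop:remaining-tasks} indeed rely only on Propositions~\ref{prop:abstract:lower-bound}--\ref{prop:upper-bound:simplified} together with~\ref{item:task:truncated-mean}--\ref{item:task:conditional-v} and~\ref{item:task:parameter:C}--\ref{item:task:parameter:E-vs-m}, so there is no circularity through~\ref{item:task:last-excursion}. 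Your Campbell--Mecke bound on $\xPr_u(S\le M\tau',\,\delta\hat{T}_v(S)>C)$ is also correct: the inclusion $\{B(s)=1,\,\delta\hat{T}_v(s)>C\}\subseteq\{\delta\hat{T}^\circlearrowleft_{\{u,v\}}(s)>C\}$ reduces to an event measurable in the first $C$ ticks, so the short-term comparison applies, and the factor $\gamma(0)M\asymp\Gamma^{(0)}$ is killed by $\alpha^{cA_1\log\gamma(0)}$, which decays faster than any fixed power of $\lambda$.

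The paper's route is different and somewhat softer: it invokes Helly's selection theorem to produce a subsequential limit $H$ of $\tau\mapsto\xPr_u(T_v/M>\tau)$, and then uses right-continuity of $H$ together with the bound $\xPr_u(\delta T>\delta s)\le\varepsilon+\smallo(1)$ (obtained from~\ref{item:task:conditional-v} in a manner close to your second-event estimate) to conclude at continuity points. This buys simplicity---one avoids importing the quantitative bounds on $\xPr_u(S>\cdot)$ and the Campbell--Mecke computation---at the cost of the weaker ``subsequence, co-countable $\tau$'' formulation, which is all that Proposition~\ref{prop:remaining-tasks} needs. Your approach is more constructive and self-contained once the bounds on $\xPr_u(S>\cdot)$ are isolated as a lemma; it would streamline the main theorem by eliminating the subsequence argument there as well. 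One presentational caveat: since those two-sided bounds are currently embedded in the proof of Proposition~\ref{prop:remaining-tasks} rather than stated independently, you should either extract them as a stand-alone lemma preceding the present proposition, or note explicitly that you are invoking that portion of the argument.
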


\begin{proof}
We loosely follow the argument in the proof of Proposition~\ref{prop:task:last-excursion:time-homogeneous:proof}.  Since a priori we do not know if $S/\xExp[S]$ (or $T_v/\xExp_u[T_v]$) converges in distribution to a continuous random variable, we use a more abstract argument.

Let $(\lambda_n)_{n\in\NN}$ be a sequence going to infinity.  By Helly's selection theorem (see e.g.,~\cite[Theorem~3.2.6]{Dur10}), there exists a subsequence $(\lambda_{n(i)})_{i\in\NN}$ and a right-continuous, non-increasing function $H:[0,\infty)\to[0,1]$ such that
\begin{align}
\lim_{\substack{\lambda\isdef\lambda_{n(i)}\\ i\to\infty}}\xPr_u\big(T_v/M>\tau\big) &= H(\tau)
\end{align}
for every $\tau\in[0,\infty)$ that is a continuity point of $H$.  As $H$ is non-increasing, it has at most countably many discontinuity points.  It is therefore enough to show that 
\begin{align}
\label{eq:time-inhomogeneous:short-term-regularity:V:limit:main}
\lim_{\substack{\lambda\isdef\lambda_{n(i)}\\ i\to\infty}}\xPr_u(S\leq M\tau<T_v) &= 0
\end{align}
for every continuity point $\tau\in[0,\infty)$ of $H$.

Let $\delta T\isdef T_v - S$.  Fix $\varepsilon>0$ and choose $Q=Q(\lambda)$ such that
\begin{align}
\label{eq:time-inhomogeneous:short-term-regularity:V:Q:requirements}
\xPr_u(\delta T>Q)\leq\varepsilon + \smallo(1)	\qquad\text{and}\qquad	Q\prec M \;.
\end{align}
Namely, let $k$ and $\delta s$ be as in~\eqref{eq:parameters:explicit-choices:time-inhomogeneous}.  We claim that $Q\isdef\delta s$ satisfies~\eqref{eq:time-inhomogeneous:short-term-regularity:V:Q:requirements}.  Indeed, by the discussion after~\eqref{eq:parameters:explicit-choices:time-inhomogeneous}, $\delta s\prec M$.  By monotonicity and~\eqref{eq:time-homogeneous:success-prob-vs-expectation}, $\xExp_u[S]\leq\xExp_u[T_v]\preceq\xExp^{(0)}_u[T_v]\asymp 1/\check{\nu}(0)\asymp M$.  Let $\tau_0\in[0,\infty)$ be large enough such that $\xPr_u(S>M\tau_0)<\varepsilon$.  Then,
\begin{align}
\label{eq:time-inhomogeneous:short-term-regularity:V:Q:choice}
\xPr_u(T_v - S>\delta s) &\leq
\begin{multlined}[t]
\xPr(S>M\tau_0) + \xPr\big(\rv{\xi}\big((S,S+\delta s)\big)<k\,\big|\,S\leq M\tau_0\big) \\
+ \xPr_u\big(T_v-S>\delta s\,\big|\,S\leq M\tau_0, \rv{\xi}\big((S,S+\delta s)\big)\geq k\big) \;.
\end{multlined}
\end{align}
The first term on the right-hand side is bounded by~$\varepsilon$.  The second term is $\smallo(1)$ because, given $S=s$ with $s\leq M\tau_0$, $\rv{\xi}\big((S,S+\delta s)\big)$ is a Poisson random variable with mean $\int_s^{s+\delta s}\gamma(x)\dd x$, which is $\asymp\gamma(0)\delta s\succ k$ uniformly in $s\in[0,M\tau_0]$.  To estimate the third term on the right-hand side of~\eqref{eq:time-inhomogeneous:short-term-regularity:V:Q:choice}, note that
\begin{align}
\MoveEqLeft\nonumber
\xPr_u\big(T_v-S>\delta s\,\big|\,S=s, \rv{\xi}\big((s,s+\delta s)\big)\geq k\big) \\
&\leq \xPr\big(\delta\hat{T}_v(s)>k\,\big|\,
\rv{\xi}\big((s,s+\delta s)\big)\geq k,
,X(s)=u,\;\delta\hat{T}_v(s)<\delta\hat{T}^\circlearrowleft_u(s)\big) \;.
\end{align}
According to~\ref{item:task:conditional-v}, the latter is $\smallo(1)$ uniformly in $s\in[0,M\tau_0]$.

Now, we have
\begin{align}
\xPr_u(S\leq M\tau<T_v) 
&= \xPr_u(M\tau< T_v \leq M\tau + \delta T) \nonumber \\
&\leq \begin{multlined}[t]
\xPr(\delta T> Q)\xPr_u(M\tau< T_v \leq M\tau + \delta T \,|\, \delta T> Q) \\
+ \xPr(\delta T\leq Q)\xPr_u(M\tau< T_v \leq M\tau + Q\,|\,\delta T\leq Q)
\end{multlined} \\
&\leq \varepsilon + \smallo(1) + \xPr_u(M\tau< T_v \leq M\tau + Q) \nonumber \;.
\end{align}
To estimate the latter, we write
\begin{align}
\label{eq:time-inhomogeneous:short-term-regularity:V:scaled}
\xPr_u(M\tau< T_v \leq M\tau + Q) &=
\xPr_u\big(\tau < T_v/M \leq \tau
+ Q/M\big) \;,
\end{align}
and note that $Q/M=\smallo(1)$.
Since $H$ is right-continuous, it follows that for every continuity point $\tau\in[0,\infty)$ of~$H$,
\begin{align}
\label{eq:time-inhomogeneous:short-term-regularity:V:limit}
\lim_{\substack{\lambda\isdef\lambda_{n(i)}\\ i\to\infty}}\xPr_u(M\tau< T_v \leq M\tau + Q) &=
H(\tau) - H(\tau+) = 0 \;.
\end{align}
Consequently, when $\tau\in[0,\infty)$ is a continuity point of~$H$,
\begin{align}
\limsup_{\substack{\lambda\isdef\lambda_{n(i)}\\ i\to\infty}}\xPr_u(S\leq M\tau<T_v) &\leq \varepsilon \;.
\end{align}
Since $\varepsilon>0$ is arbitrary, the limit exists and is zero, as it was claimed.
\end{proof}

\subsection{Proof of the main theorem}
\label{sec:proofmainth}

\begin{proof}[Proof of Theorem~\ref{thm:main}]
We consider the different scenarios and regimes separately.

\begin{enumerate}[label={(\roman*)}]
\item Scenario $M\prec\lambda$:
\begin{description}[topsep=0ex]
\item[\textsl{Supercritical regime:}] $M\check{\varepsilon}(0)\gamma(0)\succ 1$.\\
From~\eqref{eq:time-homogeneous:success-prob-vs-expectation}, we have $\xExp^{(0)}_u[T_v] \asymp \frac{1}{\check{\varepsilon}(0)\gamma(0)}\prec M$ as $\lambda\to\infty$.  Applying monotonicity and the Markov inequality, we obtain
\begin{align}
\xPr_u(T_v>M\tau) &\leq \xPr^{(0)}_u(T_v>M\tau) \leq\frac{1}{M\tau}\xExp^{(0)}_u[T_v] = \smallo(1)
\end{align}
for every $\tau>0$ as claimed.

\item[\textsl{Critical regime:}] $M\check{\varepsilon}(0)\gamma(0)\asymp 1$.\\
Since convergence in distribution comes from a metric, it is enough to show that for every sequence $(\lambda_n)_{n\in\NN}$ going to infinity, there exists a subsequence $(\lambda_{n(i)})_{i\in\NN}$ such that for every $\tau\in[0,\infty)$,
\begin{align}
\label{eq:main-result:proof}
\lim_{\substack{\lambda\isdef\lambda_{n(i)}\\ i\to\infty}} \xPr_u\Big(\frac{T_v}{M}>\tau\Big) 
\exp\left(\int_0^\tau M\check{\nu}(M\sigma)\,\dd\sigma\right) &= 1 \;.
\end{align}
Furthermore, in order to show the latter, it is enough to show that~\eqref{eq:main-result:proof} holds for a dense set of values $\tau\in[0,\infty)$.

We apply Proposition~\ref{prop:remaining-tasks}.
First note that, by Proposition~\ref{prop:critical-regime:orders-of-magnitude},
\begin{align}
\check{\varepsilon}(M\tau)\int_0^{M\tau}\gamma(s)\dd s &=
\check{\varepsilon}(M\tau)\int_0^\tau\gamma(M\sigma)M\dd\sigma \asymp M\check{\varepsilon}(0)\gamma(0)
\asymp 1.		
\end{align}
Therefore, it remains to verify that $C, k, m\in\ZZ^+$, $\delta s\in\RR^+$ and $\Xi_{M\tau}$ with $\xPr(\rv{\xi}\in\Xi_{M\tau})=\smallo(1)$ can be chosen such that Conditions~\ref{item:task:parameter:C}--\ref{item:task:parameter:E-vs-m} and \ref{item:task:truncated-mean}--\ref{item:task:last-excursion} are satisfied. We choose $C$, $k$, $m$ and $\delta s$ according to~\eqref{eq:parameters:explicit-choices:time-inhomogeneous}.  By the discussion after~\eqref{eq:parameters:explicit-choices:time-inhomogeneous}, these choices satisfy~\ref{item:task:parameter:C}--\ref{item:task:parameter:E-vs-m} at the critical regime. The short-term regularity Conditions~\ref{item:task:truncated-mean}--\ref{item:task:last-excursion} in turn are shown to be satisfied in Propositions~\ref{prop:time-inhomogeneous:short-term-regularity:I-II}--\ref{prop:time-inhomogeneous:short-term-regularity:V}.

\item[\textsl{Subcritical regime:}] $M\check{\varepsilon}(0)\gamma(0)\prec 1$.\\
According to Proposition~\ref{prop:critical-regime:orders-of-magnitude}, $\check{\varepsilon}(M\tau)\asymp\check{\varepsilon}(0)$ and $\gamma(M\tau)\asymp\gamma(0)$ as $\lambda\to\infty$.  Recall that under the measure $\xPr^{(M\tau)}_u(\cdot)$, the scaled hitting time $T_v/\xExp^{(M\tau)}_u[T_v]$ is asymptotically exponentially distributed as $\lambda\to\infty$ (see the review of the time-homogeneous results in Section~\ref{sec:intro:formulation}).  From~\eqref{eq:time-homogeneous:success-prob-vs-expectation}, we have $\xExp^{(M\tau)}_u[T_v] \asymp \frac{1}{\check{\varepsilon}(M\tau)\gamma(M\tau)}\succ M$ as $\lambda\to\infty$.  Applying monotonicity, for every $x>0$, we get
\begin{align}
\lim_{\lambda\to\infty}\xPr_u(T_v>M\tau) &\geq \lim_{\lambda\to\infty}\xPr^{(M\tau)}_u(T_v>M\tau) \\
&\geq \lim_{\lambda\to\infty}\xPr^{(M\tau)}_u\left(T_v>x\xExp^{(M\tau)}_u[T_v]\right)
= \ee^{-x} \;. \nonumber
\end{align}
Sending $x\to 0$, we find that $\lim_{\lambda\to\infty}\xPr_u(T_v>M\tau)=1$ as claimed.

\end{description}

\item Scenario $M\asymp\lambda$:
\begin{case}[1]{$0<\tau<\frac{c_U}{\mu_U}\frac{\lambda}{M}$.}\\
This case is similar to the scenario in which $M\prec\lambda$ (see above).
\end{case}
\begin{case}[2]{$\tau\geq\frac{c_U}{\mu_U}\frac{\lambda}{M}$.}\\
This case is similar to the scenario in which $M\succ\lambda$ (see below).
\end{case}
\item Scenario $M\succ\lambda$:\\
Note that in this scenario, for every $\sigma>0$, $\lambda_U(M\sigma)=0$ for all sufficiently large $\lambda$ while $\lambda_V(M\sigma)\to\infty$ as $\lambda\to\infty$.
Therefore, when $\lambda$ is sufficiently large, in order for the process to reach state $v$, it suffices that every particle on $U$ is removed and a particle is placed at each site in $V$.  To be specific, let us consider the process starting from $u$, and let $R_U$ denote the first time that every particle on $U$ is removed.  Note that $R_U$ is distributed as the maximum of $\abs{U}$ independent exponentially distributed random variables with rate~$1$, and in particular, the distribution of $R_U$ is independent of $\lambda$.  For each $b\in V$, let $S'_b$ and $R'_b$ denote respectively the first time after $R$ at which the birth clock or the death clock at site $b$ tick.  Note that $S'_b-R_U$ and $R'_b-R_U$ (for $b\in V$) are all independent and exponentially distributed, with $S'_b-R_U$ having rate~$\lambda_V$ and $R'_b-R_U$ having rate~$1$.  Let $\varepsilon>0$ be arbitrary.  Then, the probability that $\max_{b\in V}(S'_b-R_U)<\min\big(\min_{b\in V}(R'_b-R_U), \varepsilon\big)$ approaches~$1$ as $\lambda\to\infty$.  On the latter event, we clearly have $T_v<R_U+\varepsilon$.  It follow that
\begin{align}
\xPr_u(T_v>M\tau) &\leq \xPr_u(R_U+\varepsilon>M\tau) + \smallo(1) = \smallo(1),
\end{align}
proving the claim. 
\qedhere
\end{enumerate}
\end{proof}



\bibliographystyle{plainurl}

\bibliography{bibliography}


\end{document}